\definecolor{eldritch}{RGB}{205,18,29}
\numberwithin{equation}{section}
\newtheorem{thm}{Theorem}[section]
\newtheorem{prop}[thm]{Proposition}
\newtheorem{lem}[thm]{Lemma}
\newtheorem{cor}[thm]{Corollary}
\theoremstyle{definition}
\newtheorem{dfn}[thm]{Definition}
\newtheorem{ex}[thm]{Example}
\newtheorem{rem}[thm]{Remark}
\newtheorem{obs}[thm]{Observation}
\newcommand{\defn}[1]{\it{#1}\rm{}}
\newcommand{\Def}[1]{\it{#1}\rm{}}
\newcommand{\Z}{\mathbb{Z}}
\newcommand{\R}{\mathbb{R}}
\newcommand{\Q}{\mathbb{Q}}
\newcommand{\ep}{\varepsilon}
\newcommand{\Hom}[1]{\mathrm{Hom}_{#1}}
\newcommand{\opposite}{^{\mathrm{op}}}
\newcommand{\id}[1]{\mathrm{id}_{#1}}
\newcommand{\proj}[1]{\mathrm{pr}_{#1}}
\newcommand{\Set}{\mathsf{Set}}
\newcommand{\sSet}{\mathsf{sSet}}
\newcommand{\D}{\bm{\Delta}}
\newcommand{\boundary}[1]{\partial\Delta[#1]}
\newcommand{\simplex}[2]{\Delta^{#1}[#2]}
\newcommand{\topsimplex}[1]{\Delta_{#1}}
\newcommand{\diffsimplex}[1]{\Delta^{#1}}
\newcommand{\diffhorn}[2]{\Lambda^{#2}_{#1}}
\newcommand{\K}{\bm{k}}
\newcommand{\dR}[3]{\mathcal{A}^{#2}({#1}{#3})}
\newcommand{\dq}{\vartheta}
\newcommand{\bs}[1]{\mathsf{b}_{#1}}
\newcommand{\fs}[1]{\mathsf{f}_{#1}}
\newcommand{\us}[1]{\mathsf{u}_{#1}}
\newcommand{\cact}[2]{^{#1}_{#2}}
\newcommand{\fint}[1]{{#1}_{\ast}}
\newcommand{\bint}[1]{\partial{#1}_{\ast}}
\begin{document}
\title{On Iterated Integral on Simplicial Sets}
\author{Ryohei Kageyama}
\address{Mathematical Institute, Tohoku University, Sendai 980-8578, Japan}
\email{ryohei.kageyama.s8@dc.tohoku.ac.jp}
\date{\today}
\maketitle
\begin{abstract}
A simplicial analogy of Chen's iterated integral was introduced in \cite{kageyama2022higher}. However, its properties were hardly investigated in \cite{kageyama2022higher}. In particular, no mention is made of whether it coincides with Chen's iterated integral as a special case. In this paper, we answer this question.

This paper consists of two main parts. One of them is the research of elementary properties of simplicial iterated integral. In particular, we describe a relationship between iterated integral and homotopy pullback.

The other one is a comparison of Chen's iterated integral and simplicial iterated integral.

At the end of this paper, we observe Chen's theorem and Hain's theorem which connects rational homotopy groups and (co)homologies of a smooth manifold, and give an $\infty$-categorical viewpoint.
\end{abstract}
\tableofcontents 
\section{Introduction}
A simplicial analogy of Chen's iterated integral was introduced in \cite{kageyama2022higher} for the study of higher holonomies via simplicial viewpoints. It is known that classical holonomy representation can be constructed using Chen's iterated integral. It is also known that $2$-holonomy (that is a $2$-functor from path $2$-category to some strict $2$-category) is also can be constructed using Chen's iterated integral. On the other hand, it is known that all ``homotopical data'' of topological space is contained in the singular simplicial set. For example, fundamental groupoid coincides with the homotopy category of the singular simplicial set. Therefore we would like to develop simplicial viewpoints of higher holonomy. 
It is a motivation to construct an analogy of iterated integral. to simplicial sets. However, its properties were hardly investigated in \cite{kageyama2022higher}. In particular, no mention is made of whether it coincides with Chen's iterated integral as a special case. In this paper, we answer this question.

This paper consists of two main parts. One of them is the research of elementary properties of simplicial iterated integral. In particular, we describe a relationship between iterated integral and homotopy pullback. In the section \ref{review sii}, we review definitions of the simplicial analogy of fiberwise integration and the analogy of iterated integral. The review is also required in the later part. In the section \ref{property sii}, we show that the iterated integral is a morphism of commutative cochain algebras induced by the universality of homotopy pushout.

The other one is a comparison of Chen's iterated integral and simplicial iterated integral. In section \ref{review diff}, we review the loop space of smooth manifold as diffeological space. Since we use Kihara's diffeology to connect diffeological spaces and simplicial sets, we also review it. In \ref{review dR}, we describe some comparison of some types of de Rham algebras. We also describe de Rham theorem for diffeological spaces which is a result of Kuribayashi \cite{kuribayashi2020simplicial}. The comparison of Chen's iterated integral and simplicial iterated integral is described in \ref{comparison}.

At the end of this paper, we observe Chen's theorem and Hain's theorem which connects rational homotopy groups and (co)homologies of a smooth manifold. Combining the theorems in this paper, we give a claim that we believe is a generalization of Chen's theorem and Hain's theorem. The claim gives an $\infty$-categorical viewpoint.

\subsection*{Notation}
In this paper, we use the following notations:
\begin{itemize}
	\item For any (graded) module $V$ over some commutative ring $\K$, we denote its (graded) tensor algebra $\bigoplus_{r}V^{\otimes_{\K}r}=\K\oplus V\oplus(V\otimes_{\K}V)\oplus\cdots$ over $\K$ by $\mathsf{T}V_{\K}$.
	\item For each order preserving map $\alpha\colon[m]\to[n]$, also denote the induced map $\simplex{}{m}\to\simplex{}{n}$ by $\alpha$.
	\item For each pair of simplcial set $(X,Y)$, we denote the function complex as $Y^{X}$ or $\mathscr{F}\mathit{un}(X,Y)$.
	\item We call a simplicial map $f\colon X\to Y$ is a \defn{weak homotopy equivalence} if the induced map $f^{\ast}\colon\pi_{0}(K^{Y})\to\pi_{0}(K^{X})$ is bijective for any \bf{}Kan complex\rm{} $K$.
	\item We often denote cofibration as $\hookrightarrow$.
	\item We often denote fibration as $\twoheadrightarrow$.
	\item We often denote a weak equivalence as $\xrightarrow{\sim}$. On the other hand, we often denote an isomorphism as $\xrightarrow{\simeq}$.
\end{itemize}
\subsection*{Acknowlegdments}
The author would like to thank Yuji Terashima, Shun Wakatsuki, Takumi Maegawa and Shun Oshima for useful communication.

\section{elementary properties of (iterated) integral on a simplicial set}\label{review sii}
\subsection{review of integration on a simplicial set}
\subsubsection{Divided Power de Rham Complexes}
An order-preserving map $\alpha\colon[m]\to[n]$ gives an affine map $\alpha_{\ast}\mathbb{A}_{\R}^{m+1}\to\mathbb{A}_{\R}^{n+1}$
\begin{align*}
	(x_{0},\dots,x_{m})\mapsto(\sum_{\alpha(j)=0}x_{j},\dots,\sum_{\alpha(j)=n}x_{j}),
\end{align*}
and an affine map $\bm{V}(\underset{0\leq i\leq m}{\sum}x_{i}-1)\to\bm{V}(\underset{0\leq i\leq n}{\sum}x_{i}-1)$ between hyperplanes. It induces a map between subspaces
\begin{center}
\begin{tikzpicture}[auto]
	\node (11) at (0, 1.2) {$\diffsimplex{m}$};
	\node (12) at (4, 1.2) {$\diffsimplex{n}$};
	\node (21) at (0, 0) {$\bm{V}(\underset{0\leq i\leq m}{\sum}x_{i}-1)$};
	\node (22) at (4, 0) {$\bm{V}(\underset{0\leq i\leq n}{\sum}x_{i}-1)$};
	\path[draw, right hook->] (11) -- (21);
	\path[draw, right hook->] (12) -- (22);
	\path[draw, ->, dashed] (11) --node {$\scriptstyle \alpha_{\ast}$} (12);
	\path[draw, ->] (21) --node {$\scriptstyle \alpha_{\ast}$} (22);
\end{tikzpicture}
\end{center}
which defined as $\diffsimplex{n}\coloneqq\{(x_{0},\dots,x_{n})\in\bm{V}(\sum_{i}x_{i}-1)|x_{i}\in[0,1]\}$ for each $n\geq0$. For each $n\geq0$, there is an isomorphim $\mathbb{A}_{\R}^{n}\cong\bm{V}(\sum_{i}x_{i}-1)$ defined as follows:
\begin{align*}
	\mathbb{A}_{\R}^{n}\to\bm{V}(\sum_{i}x^{i}-1),\ &\ (t^{1},\dots,t^{n})\mapsto(1-t^{1},t^{1}-t^{2},\dots,t^{n-1}-t^{n},t^{n}-0)\\
	\bm{V}(\sum_{i}x^{i}-1)\to\mathbb{A}_{\R}^{n},\ &\ (x^{0},\dots,x^{n})\mapsto(\sum_{i=1}^{n}x^{i},\dots,\sum_{i=n}^{n}x^{i}).
\end{align*}
The image of $\diffsimplex{n}$ under the isomorphism is given by
\begin{align*}
	\topsimplex{n}\coloneqq\{(t_{1},\dots,t_{n})|1\geq t_{1}\geq\dots\geq t_{n}\geq0\}.
\end{align*}
For each order-preserving map $\alpha\colon[m]\to[n]$, we obtain a commutative diagram
\begin{center}
\begin{tikzpicture}[auto]
	\node (11) at (0, 1.2) {$\mathbb{A}_{\R}^{m}$};
	\node (21) at (0, 0) {$\mathbb{A}_{\R}^{n}$};
	\node (12) at (2.5, 1.2) {$\bm{V}(\sum_{j}X^{j}-1)$};
	\node (22) at (2.5, 0) {$\bm{V}(\sum_{j}X^{j}-1)$};
	\node (13) at (5, 1.2) {$\mathbb{A}_{\R}^{m+1}$};
	\node (23) at (5, 0) {$\mathbb{A}_{\R}^{n+1}$};
	\path[draw, ->] (11) --node {$\scriptstyle \simeq$} (12);
	\path[draw, right hook->] (12) -- (13);
	\path[draw, ->] (21) --node {$\scriptstyle \simeq$} (22);
	\path[draw, right hook->] (22) -- (23);
	\path[draw, ->] (11) --node[swap] {$\scriptstyle \alpha_{\ast}$} (21);
	\path[draw, ->] (12) --node {$\scriptstyle \alpha_{\ast}$} (22);
	\path[draw, ->] (13) --node {$\scriptstyle \alpha_{\ast}$} (23);
\end{tikzpicture}.
\end{center}
Where $\alpha_{\ast}\colon\mathbb{A}_{\R}^{m}\to\mathbb{A}_{\R}^{n}$ is defined as follows:
\begin{align*}
	\proj{i}\alpha_{\ast}(t_{1},\dots,t_{n})\coloneqq
		\begin{cases}
			t_{\min\{j\in[m]|\alpha(j)\geqslant i\}}&(\alpha(m)\geqslant i)\\
			0&(\alpha(m)<i)
		\end{cases}.
\end{align*}
An affine space $\mathbb{A}_{\Q}^{n}$ corresponds to a polynomial ring $\Q[t_{1},\dots,t_{n}]$ and a hyperplane $\bm{V}(\sum_{i}x_{i}-1)\subset\mathbb{A}_{\Q}^{n+1}$ corresponds to a quotient ring $\Q[x_{0},\dots,x_{n}]/(\sum_{i}x_{i}-1)$.  In addition, the isomorphism $\mathbb{A}_{\Q}^{n}\colon\bm{V}(\sum_{i}x_{i}-1)$ corresponds to a ring isomorphism
\begin{align*}
	\mathbb{Q}[t_{1},\dots,t_{n}]\cong\mathbb{Q}[x_{0},\dots,x_{n}]/(\sum_{i}x_{i}-1).
\end{align*}
The quotient ring $\mathbb{Q}[x_{0},\dots,x_{n}]/(\sum_{i}x_{i}-1)$ just coincide with a ring whose elements are (Sullivan's) differential $0$-form on an  $n$-dimensional standard simplex $\simplex{}{n}$. Therefore it is not unnatural to regard the polynomial ping $\mathbb{Q}[t_{1},\dots,t_{n}]$ as a ring of functions on an $n$-dimensional standard simplex $\simplex{}{n}$.

However, the de Rham complex (which corresponds to this ring) has trivial torsion (as Abelian group). In addition, we must assume the character of the ring we are considering is $0$. Therefore we use a ring that does not contain $\Q$. The most extreme candidate is $\Z$, in which case ``integration'' cannot be defined. So we consider a divided power polynomial algebra over $\Z$, that is a free Abelian group
\begin{align*}
	\Z\langle x_{0},\dots,x_{n}\rangle\coloneqq\bigoplus_{N_{0},\dots,N_{n}\geq0}\Z x_{0}^{[N_{0}]}\dots x_{n}^{[N_{n}]}
\end{align*}
with product defined as
\begin{align*}
	(x_{0}^{[N_{10}]}\dots x_{n}^{[N_{1n}]})(x_{0}^{[N_{20}]}\dots x_{n}^{[N_{2n}]})=\frac{(N_{10}+N_{20})}{N_{10}!N_{20}!}\dots\frac{(N_{1n}+N_{2n})}{N_{1n}!N_{2n}!}x_{0}^{[N_{10}+N_{20}]}\dots x_{n}^{[N_{1n}+N_{2n}]}
\end{align*}
to be the ring of ``functions on an $n$-dimensional standard simplex $\simplex{}{n}$''. We denote $x_{i}^{[1]}$ as $x_{i}$. This ring can be embedded in the polynomial ring $\Q[x_{0},\dots,x_{n}]$ by the canonical way which is given by following morphism :
\begin{align*}
	x_{0}^{[N_{0}]}x_{1}^{[N_{1}]}\dots x_{n}^{[N_{n}]}\mapsto\frac{1}{N_{0}!N_{1}!\dots N_{n}!}x_{0}^{N_{0}}x_{1}^{N_{1}}\dots x_{n}^{N_{n}}.
\end{align*}
Similarly, three kinds of (canonical) morphisms
	\begin{align*}
		\Z\langle x_{0},\dots,x_{n}\rangle&\to\Q\langle x_{1},\dots,x_{n}\rangle\\
		\Z\langle x_{0},\dots,x_{n}\rangle&\to\Z_{(p)}\langle x_{1},\dots,x_{n}\rangle\\
		\Z\langle x_{0},\dots,x_{n}\rangle&\to\Z\langle x_{1},\dots,x_{n}\rangle
	\end{align*}
	are given as follows where $p$ is a prime number:
	\begin{align*}
		x_{0}^{[N_{0}]}x_{1}^{[N_{1}]}\dots x_{n}^{[N_{n}]}
			&\mapsto
		\frac{1}{N_{0}!}x_{1}^{[N_{1}]}\dots x_{n}^{[N_{n}]},\\
		x_{0}^{[N_{0}]}x_{1}^{[N_{1}]}\dots x_{n}^{[N_{n}]}
			&\mapsto
		\frac{1}{N_{0}!}p^{N_{0}}x_{1}^{[N_{1}]}\dots x_{n}^{[N_{n}]},\\
		x_{0}^{[N_{0}]}x_{1}^{[N_{1}]}\dots x_{n}^{[N_{n}]}
			&\mapsto
		\begin{cases}
			x_{1}^{[N_{1}]}\dots x_{n}^{[N_{n}]}&(N_{0}=0)\\
			0&(N_{0}\neq0)
		\end{cases}.
	\end{align*}
	More generally, a divided power polynomial algebra has a universal property like polynomial rings. Therefore, for each map $\ep\colon\{x_{0},x_{1},\dots,x_{n}\}\to\{x_{0},x_{1},\dots,x_{n}\}$, there exists a unique morphism $\overline{\ep}\colon\Z\langle x_{0},\dots,x_{n}\rangle\to\Z\langle x_{0},\dots,x_{n}\rangle$ satisfies $\overline{\ep}(x_{i}^{[N_{i}]})=\ep(x_{i})^{[N_{i}]}$ for each $i=0,\dots,n$.

We define a morphism $\alpha^{\ast}\colon\Z\langle x_{0},\dots,x_{n}\rangle\to\Z\langle x_{0},\dots,x_{m}\rangle$ as
\begin{align*}
	\alpha^{\ast}(x_{i}^{[N]})\coloneqq
		\begin{cases}
			x_{\min\{j|\alpha(j)\geq i\}}^{[N]}&(\alpha(m)\geq i)\\
			0&(\alpha(m)<i)
		\end{cases}
\end{align*}
for each order-preserving maps $\alpha\colon[m]\to[n]$. We obtain a simplicial $\Z\langle x_{0}\rangle$-algebra $\dR{\simplex{}{-}}{0}{,\Z\langle x_{0}\rangle}$ by above.
Hereafter we denote $x_{0}$ of these rings as $\dq$, and consider $\dq$ to be an element like the unit of the ring.

For each non-negative integer $n\geq0$ and arbitrary $\dR{\simplex{}{n}}{0}{,\Z\langle \dq\rangle}$-modules $M$, an (Abelian) group morphism of $\theta\colon\dR{\simplex{}{n}}{0}{,\Z\langle \dq\rangle}\to M$ which satisfies the following is called a divided power $\Z\langle\dq\rangle$-derivation:
\begin{align*}
	\theta(a)
		&=
	0&
		&\text{for all }
	a\in\Z\langle\dq\rangle,\\
	\theta(fg)
		&=
	g\theta(f)+f\theta(g)&
		&\text{for all }
	f,g\in\dR{\simplex{}{n}}{0}{,\Z\langle \dq\rangle},\\
	\theta(x_{i}^{[N]})
		&=
	x_{i}^{[N-1]}\theta(x_{i})&
		&\text{for all }
	i=1,\dots,n\text{ and }N\geq1.
\end{align*}
The $\dR{\simplex{}{n}}{0}{,\Z\langle \dq\rangle}$-module of divided power $\Z\langle\dq\rangle$-derivations of $\dR{\simplex{}{n}}{0}{,\Z\langle \dq\rangle}$ into $M$ gives a representable functor $\mathrm{Der}_{\Z\langle\dq\rangle}(\dR{\simplex{}{n}}{0}{,\Z\langle \dq\rangle},-)\colon\mathrm{Mod}_{\dR{\simplex{}{n}}{0}{,\Z\langle \dq\rangle}}\to\mathrm{Mod}_{\dR{\simplex{}{n}}{0}{,\Z\langle \dq\rangle}}$.
It is represented by a free $\Z\langle\dq,x_{1},\dots,x_{n}\rangle$-module $\dR{\simplex{}{n}}{1}{,\Z\langle \dq\rangle}$ generated by formal elements $dx_{1},\dots,dx_{n}$. In addition the derivation $d^{0}\colon\dR{\simplex{}{n}}{0}{,\Z\langle \dq\rangle}\to\dR{\simplex{}{n}}{1}{,\Z\langle \dq\rangle}$ corresponding to the identity $\id{}\colon\dR{\simplex{}{n}}{1}{,\Z\langle \dq\rangle}\to\dR{\simplex{}{n}}{1}{,\Z\langle \dq\rangle}$ is given as follows:
\begin{equation*}
	d^{0}(\sum_{N_{1},\dots,N_{n}}f_{N_{1},\dots,N_{n}}x_{1}^{[N_{1}]}\dots x_{n}^{[N_{n}]})
		\coloneqq
	\sum_{i=1}^{n}(\sum_{N_{1},\dots,N_{n}}f_{N_{1},\dots,N_{n}}x_{1}^{[N_{1}]}\dots x_{i}^{[N_{i}-1]}\dots x_{n}^{[N_{n}]})dx_{i}.
\end{equation*}
We denote the derivation $\dR{\simplex{}{n}}{0}{,\Z\langle \dq\rangle}\to\dR{\simplex{}{n}}{0}{,\Z\langle \dq\rangle}$ corresponding to the ``standard dual base'' 
\begin{align*}
	\chi_{dx_{i}}
		\colon
	\dR{\simplex{}{n}}{1}{,\Z\langle \dq\rangle}
		&\to
	\dR{\simplex{}{n}}{0}{,\Z\langle \dq\rangle}\\
	\sum_{j}f_{j}dx_{j}
		&\mapsto
	f_{i}
\end{align*}
by $\frac{\partial}{\partial x_{i}}$.

They give a graded (commutative) $\dR{\simplex{}{n}}{0}{,\Z\langle \dq\rangle}$-algebra
\begin{equation*}
	\dR{\simplex{}{n}}{\bullet}{,\Z\langle \dq\rangle}
		\coloneqq
	\mathsf{Sym}\dR{\simplex{}{n}}{1}{,\Z\langle \dq\rangle}[1]
		=
	\dR{\simplex{}{n}}{0}{,\Z\langle \dq\rangle}
		\oplus
	\dR{\simplex{}{n}}{1}{,\Z\langle \dq\rangle}
		\oplus
	\dots
		\oplus
	\dR{\simplex{}{n}}{n}{,\Z\langle \dq\rangle}
\end{equation*}
and a degree $-1$ derivation $d\colon\dR{\simplex{}{n}}{\bullet}{,\Z\langle \dq\rangle}\to\dR{\simplex{}{n}}{\bullet}{,\Z\langle \dq\rangle}$. In other words, we obtain a commutative cochain algebra $\dR{\simplex{}{n}}{\bullet}{,\Z\langle \dq\rangle}$. We call the cochain algebra the \Def{divided power de Rham algebra on standard simplex $\simplex{}{n}$}. For each order-preserving map $\alpha\colon[m]\to[n]$, the $\Z\langle\dq\rangle$-algebra morphism $\alpha^{\ast}$ gives a cochain algebra morphism $\alpha^{\ast}\colon\dR{\simplex{}{n}}{\bullet}{,\Z\langle \dq\rangle}\to\dR{\simplex{}{m}}{\bullet}{,\Z\langle \dq\rangle}$. Therefore we obtain a simplicial commutative cochain algebra $\dR{\simplex{}{-}}{\bullet}{,\Z\langle \dq\rangle}\colon\D\opposite\to\mathsf{dgA}_{\Z\langle\dq\rangle}$.

\subsubsection{Integration on Standard Simplices}
To define the integration of formal differential forms, we observe the classical case, in other words, the integration of a polynomial function of real coefficients. For any integer $a\in\R$ and non-negative ingeter $N$, the following (redundant) equation holds:
\begin{align*}
	\int_{\alpha}^{\beta}a\frac{x^{N}}{N!}dx=a\frac{\beta^{N+1}}{(N+1)!}-a\frac{\alpha^{N+1}}{(N+1)!}
\end{align*}
\begin{dfn}(iterated integral of divided power polynomial functions)
	Let $f=\sum_{N_{1},\dots,N_{r}}m_{N_{1},\dots,N_{r}}x_{1}^{[N_{1}]}\cdots x_{r}^{[N_{r}]}$ be an $r$-variable divided power polynomial of integer coefficients, that is an element of $\Z\langle\dq,x_{1},\dots,x_{n}\rangle$. Then we define the \Def{iterated integration} of $f$
	\begin{align*}
		&\int_{\alpha_{p}}^{\beta_{p}}\dots\int_{\alpha_{1}}^{\beta_{1}}fdx_{i_{1}}\cdots dx_{i_{p}}
		&(\alpha_{1},\dots,\alpha_{p},\beta_{1},\dots,\beta_{p}\in\{\dq,x_{1},\dots,x_{r},0\})
	\end{align*}
	inductively as follows:
	\begin{align*}
		\int_{\alpha_{1}}^{\beta_{1}}fdx_{i_{1}}
			&\coloneqq
		\sum_{N_{1},\dots,N_{r}}m_{N_{1},\dots,N_{r}}x_{1}^{[N_{1}]}\cdots(\beta_{1}^{[N_{i_{1}}+1]}-\alpha_{1}^{[N_{i_{1}}+1]})\cdots x_{r}^{[N_{r}]},\\
		\int_{\alpha_{p}}^{\beta_{p}}\dots\int_{\alpha_{1}}^{\beta_{1}}fdx_{i_{1}}\cdots dx_{i_{p}}
			&\coloneqq
		\int_{\alpha_{p}}^{\beta_{p}}(\int_{\alpha_{p-1}}^{\beta_{p-1}}\dots\int_{\alpha_{1}}^{\beta_{1}}fdx_{i_{1}}\cdots dx_{i_{p-1}})dx_{i_{p}}.
	\end{align*}
\end{dfn}

For each pair $(n,r)$ of integers, there is a canonical partition
\begin{equation}\label{chain partition}
	[n]\times[r]\cong\bigcup_{\Gamma\colon[n+r]\hookrightarrow[n]\times[r]}[n+r].
\end{equation}
using maximal chains.
For any maximal chains $\Gamma\colon[n+r]\hookrightarrow[n]\times[r]$, we denote the map $\proj{1}\Gamma\colon[n+r]\to[n]$ (resp. $\proj{2}\Gamma\colon[n+r]\to[r]$) as $\Gamma_{\bs{}}$ (resp. $\Gamma_{\fs{}}$).  In addition, a maximal chain $\Gamma\colon[n+r]\hookrightarrow[n]\times[r]$ define two order-preserving maps $\bs{\Gamma}\colon[n]\to[n+r],\fs{\Gamma}\colon[r]\to[n+r]$ as follows:
\begin{align*}
	\bs{\Gamma}(i)
		&\coloneqq
	\min\{j\in[p]|\Gamma_{\bs{}}(j)=i\},\\
	\fs{\Gamma}(i)
		&\coloneqq
	\min\{j\in[p]|\Gamma_{\fs{}}(j)\geq i\},\\
	\us{\Gamma}(i)
		&\coloneqq
	\fs{\Gamma}(\min\{j\in\{1,\dots,p\}|\fs{\Gamma}(j)-j=\fs{\Gamma}(i)-i\})-1.
\end{align*}
\begin{rem}(geometrical meanings)
	The geometric realization of the nerve of a poset $[n]\times[r]$ is just the product of topological standard simplices $\topsimplex{n}\times\topsimplex{r}$. The above partition \ref{chain partition} is a canonical wap to partition of the space into topological standard simplices. The intersection of a fiber $\proj{\topsimplex{n}}^{-1}(\bm{x})\subset\topsimplex{n}\times\topsimplex{r}$ of projection $\proj{\topsimplex{n}}\colon\topsimplex{n}\times\topsimplex{r}\to\topsimplex{n}$ and the image of each embedding $\Gamma_{\ast}\colon\topsimplex{n+r}\to\topsimplex{n}\times\topsimplex{r}$ is given as follows:
	\begin{align*}
		\proj{\topsimplex{n}}^{-1}(\bm{x})\cap\mathrm{Im}\Gamma_{\ast}
			&\cong
		(\Gamma_{\bs{}})_{\ast}^{-1}(\bm{x})\\
			&=
		\{(t_{1},\dots,t_{n+r})\in\topsimplex{n+r}|t_{\min\{j|\Gamma_{\bs{}}(j)\geq i\}}=x_{i}\}\\
			&=
		\{(t_{1},\dots,t_{n+r})\in\topsimplex{n+r}|t_{\bs{\Gamma}(i)}=x_{i}\}.
	\end{align*}
	For each maximal chain $\Gamma\colon[n+r]\hookrightarrow[n]\times[r]$,
	$\mathrm{Im}\bs{\Gamma}\cap\mathrm{Im}\fs{\Gamma}=\{0\}$, $\mathrm{Im}\bs{\Gamma}\cup\mathrm{Im}\fs{\Gamma}=[n+r]$ hold. Hence we can regard
	\begin{itemize}
		\item $\bs{\Gamma}$ represents the ``base direction''.
		\item $\fs{\Gamma}$ represents the ``fiber direction''.
	\end{itemize}
	(The standard coordinate of $\topsimplex{n+r}$ can be split into two kinds of ``direction'', ``base direction'' and ``fiber direction''.) In addition the following holds:
	$$\proj{\topsimplex{n}}^{-1}(\bm{x})\cap\mathrm{Im}\Gamma_{\ast}\cong(\Gamma_{\bs{}})^{-1}(\bm{x})\cong\topsimplex{r_{1}}\times\dots\times\topsimplex{r_{\mathsf{n}_{\Gamma}}}.$$
	\begin{center}
	\begin{tikzpicture}[auto]
		\node (1) at (2.5, 3.7) {$\topsimplex{n+r}$};
		\node (2) at (6.5, 3.7) {$\topsimplex{n}\times\topsimplex{r}$};
		\node (3) at (6.5, 0.2) {$\topsimplex{n}$};
		\path[draw] (0, 1.5) -- (0, 3.5) -- (2, 3.5) -- cycle;
		\path[draw] (4, 1.5) -- (4, 3.5) -- (6, 3.5) -- cycle;
		\path[draw] (4, 1.5) -- (6, 1.5) -- (6, 3.5) -- cycle;
		\path[draw] (4, 0) -- (6, 0);
		\path[draw] (1.5, 3) -- (1.5, 3.5);
		\path[draw] (5.5, 1.5) -- (5.5, 3.5);
		\path[draw, dashed] (5.5, 0) -- (5.5, 1.5);
		\path[draw, right hook->] (2+0.2, 2.5) -- (4-0.2, 2.5);
		\path[draw, ->>] (5, 1.5-0.2) -- (5, 0+0.2);
		\node (b) at (0.8, 2.5) {$\scriptstyle \bs{}$};
		\node (f) at (0.2, 3.1) {$\scriptstyle \fs{}$};
		\path[draw, ->] (0.1, 2.5) -- (b);
		\path[draw, ->] (0.2, 2.4) -- (f);
		\draw[black!35, fill=black!35] (9, 0.7) -- (8.3, 2.1) -- (9.7, 3.5) -- (12.5, 2.8) --cycle;
		\draw[black!15, fill=black!15] (9, 0.7) -- (12.5, 2.8) -- (11.8, 0) --cycle;
		\draw[black!40, fill=black!40] (8.3, 2.1) -- (9.35, 2.275) -- (10.4, 3.325) -- (9.7, 3.5) --cycle;
		\draw[black!30, fill=black!30] (9, 0.7) -- (8.3, 2.1) -- (0.5*0.7+9, 3.25*0.7) -- (1.25*0.7+9, 1.75*0.7) -- (0.75*0.7+9, 1.25*0.7) -- cycle;
		\draw[black!20, fill=black!20] (9, 0.7) -- (0.75*0.7+9, 1.25*0.7) -- (1.25*0.7+9, 1.75*0.7) -- (9.7, 0.75*0.7) -- cycle;
		\draw[black!60, fill=black!60] (10.25*0.7-9*0.7+9, 1.75*0.7) -- (9.5*0.7-9*0.7+9, 3.25*0.7) -- (11*0.7-9*0.7+9, 4.75*0.7) --cycle;
		\draw[black!40, fill=black!40] (9.7, 0.75*0.7) -- (0.75*0.7+9, 1.25*0.7) -- (1.25*0.7+9, 1.75*0.7) --cycle;
		\draw[black!50, fill=black!50] (0.75*0.7+9, 1.25*0.7) -- (9, 2.75*0.7) -- (0.5*0.7+9, 3.25*0.7) -- (1.25*0.7+9, 1.75*0.7) --cycle;
		\path[draw, dashed] (9, 1*0.7) -- (9.7, 5*0.7);
		\path[draw] (11.8, 0*0.7) -- (11.1, 2*0.7);
		\path[draw] (8.3, 3*0.7) -- (11.1, 2*0.7);
		\path[draw, dashed] (9, 1*0.7) -- (12.5, 4*0.7);
		\path[draw] (9.7, 0.75*0.7) -- (9, 2.75*0.7) -- (10.4, 4.75*0.7);
		\path[draw, dashed] (9.7, 0.75*0.7) -- (10.4, 4.75*0.7);
		\path[draw] (9, 1*0.7) -- (8.3, 3*0.7) -- (9.7, 5*0.7) -- (12.5, 4*0.7) -- (11.8, 0*0.7) --cycle;
		\path[draw] (8.3, 3*0.7) -- (12.5, 4*0.7);
		\path[draw] (9, 1*0.7) -- (11.1, 2*0.7) -- (12.5, 4*0.7);
	\end{tikzpicture}
	\end{center}
\end{rem}

\subsubsection{Fiberwise Integration}
Let $\Gamma\colon[n+r]\hookrightarrow[n]\times[r]$ be a maximal chain and $\omega\in\dR{\simplex{}{n+r}}{\bullet}{,\Z\langle\dq\rangle}$ be a differential form. Then there exists an essentially unique decomposition
\begin{align*}
	\omega
		=
	\sum_{i}\omega_{\Gamma,i,\fs{}}\wedge\Gamma_{\bs{}}^{\ast}\omega_{\Gamma,i,\bs{}}
\end{align*}
where $\omega_{\Gamma,i,\bs{}}$ is an element of $\dR{\simplex{}{n}}{\bullet}{,\Z\langle\dq\rangle}$ and $\omega_{\Gamma,i,\fs{}}$ is an element of $\dR{\simplex{}{n+r}}{\bullet}{,\Z\langle\dq\rangle}$ which  does not contain
$$x_{\bs{\Gamma}(1)}^{[N_{1}]},\dots,x_{\bs{\Gamma}(n)}^{[N_{n}]},dx_{\bs{\Gamma}(1)},\dots,dx_{\bs{\Gamma}(n)}.$$
In addition, there is a unique decomposition
\begin{equation*}
	\omega_{\Gamma,i,\fs{}}
		=
	\omega_{\Gamma,i,\fs{}}^{(r)}+\dots+\omega_{\Gamma,i,\fs{}}^{(0)}
\end{equation*}
where $\omega_{\Gamma,i,\fs{}}^{(j)}$ is an element of $\dR{\simplex{}{n}}{j}{,\Z\langle\dq\rangle}$. Especially there is a decomposition
\begin{equation*}
	\omega_{\Gamma,i,\fs{}}^{(p-(n+1))}
		=
	\sum_{j}f_{\Gamma,i,j}dx_{\fs{\Gamma}(1)}\wedge\dots\wedge dx_{\fs{\Gamma}(r)}
\end{equation*}
where $f_{\Gamma,i,,j}\in\dR{\simplex{}{n+r}}{0}{,\Z\langle\dq\rangle}$. Using this essentially unique representation, we obtain the following (where $x_{0}\coloneqq\dq$ and $x_{n+r+1}\coloneqq0$):
\begin{align*}
	\int_{\simplex{}{r}\cact{\Gamma}{}}\omega
		\coloneqq
	\sum_{i,j}\bs{\Gamma}^{\ast}(\int_{x_{\fs{\Gamma}(r)+1}}^{x_{\us{\Gamma}(r)}}\dots\int_{x_{\fs{\Gamma}(1)+1}}^{x_{\us{\Gamma}(1)}}f_{\Gamma,i,\lambda,j}dx_{\fs{\Gamma}(1)}\cdots dx_{\fs{\Gamma}(r)})\omega_{\Gamma,i,\bs{}}.
\end{align*}

Let $\omega\colon\simplex{}{n}\times\simplex{}{r}\to\dR{\simplex{}{-}}{\bullet}{,\Z\langle\dq\rangle}$ be a differential form on $\simplex{}{n}\times\simplex{}{r}$. It gives an $n$-simplex $\omega^{\wedge}$ of $\dR{\simplex{}{-}}{\bullet}{,\Z\langle\dq\rangle}^{\simplex{}{r}}$. Hence, by the Eilenberg-Zilber lemma, we obtain a unique decomposition $\omega=(\sigma\times\id{})^{\ast}\tilde{\omega}$ where $\sigma\colon[n]\to[m]$ is a surjection and $\tilde{\omega}^{\wedge}$ is a non-degenerate $m$-simplex of $\dR{\simplex{}{-}}{\bullet}{,\Z\langle\dq\rangle}^{\simplex{}{r}}$. Using this unique decomposition, we define as
\begin{align*}
	\fint{\proj{\simplex{}{n}}}\omega\coloneqq\sum_{\Gamma\colon[n+r]\hookrightarrow[n]\times[r]}\sigma^{\ast}(\int_{\simplex{}{r}\cact{\Gamma}{}}\Gamma^{\ast}\tilde{\omega}).
\end{align*}
\begin{lem}\label{Int Lem21}
	For each differential form $\omega\colon\simplex{}{n}\times\simplex{}{r}\to\dR{\simplex{}{-}}{\bullet}{,\Z\langle\dq\rangle}$ and order-preserving map $\alpha\colon[m]\to[n]$, the following holds:
	\begin{align*}
		\alpha^{\ast}\fint{\proj{\simplex{}{n}}}\omega=\fint{\proj{\simplex{}{m}}}((\alpha\times\id{})^{\ast}\omega).
	\end{align*}
\end{lem}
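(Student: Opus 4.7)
The strategy is to unfold both sides via the Eilenberg--Zilber decomposition of $\omega$ and reduce the identity to a combinatorial verification for a single face operator. Let $\omega=(\sigma\times\id{})^{\ast}\tilde\omega$ be the Eilenberg--Zilber decomposition used in the definition of $\fint{\proj{\simplex{}{n}}}\omega$, with $\sigma\colon[n]\twoheadrightarrow[n_0]$ surjective and $\tilde\omega^{\wedge}$ non-degenerate. Then $(\alpha\times\id{})^{\ast}\omega=((\sigma\alpha)\times\id{})^{\ast}\tilde\omega$. Factor $\sigma\alpha=\iota_0\tau_0$ into a surjection $\tau_0\colon[m]\twoheadrightarrow[m_0]$ followed by an injection $\iota_0\colon[m_0]\hookrightarrow[n_0]$, and perform a further Eilenberg--Zilber decomposition of $(\iota_0\times\id{})^{\ast}\tilde\omega$ (which may be degenerate even if $\tilde\omega$ is not) to obtain the canonical decomposition of $(\alpha\times\id{})^{\ast}\omega$. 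Unfolding the definition of $\fint{\proj{\simplex{}{m}}}$ and invoking the contravariant functoriality of pullback, the surjective factors of these decompositions appear as outer pullbacks on both sides of the desired identity; after cancelling the common outer $\tau_0^{\ast}$ (which is injective because it acts injectively on divided-power variables), the claim reduces to
$$\iota^{\ast}\fint{\proj{\simplex{}{n}}}\tilde\omega=\fint{\proj{\simplex{}{k}}}((\iota\times\id{})^{\ast}\tilde\omega)$$
for an injection $\iota\colon[k]\hookrightarrow[n]$ and a $\tilde\omega$ with $\tilde\omega^{\wedge}$ non-degenerate.

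Writing $\iota$ as a composition of face operators and invoking contravariant functoriality once more, it suffices to treat the case $\iota=\delta_j$. I would then partition the maximal chains $\Gamma\colon[n+r]\hookrightarrow[n]\times[r]$ appearing in the sum for $\fint{\proj{\simplex{}{n}}}\tilde\omega$ according to whether the base direction $\bs{\Gamma}$ passes through the vertex $j$. Chains whose base direction misses $j$ are in canonical bijection, via $\delta_j$, with the maximal chains of $[n-1]\times[r]$; using the divided-power pullback rule $\delta_j^{\ast}(x_i^{[N]})=x_{\min\{l\mid\delta_j(l)\geq i\}}^{[N]}$ to translate the boundary limits $x_{\fs{\Gamma}(i)+1}$ and $x_{\us{\Gamma}(i)}$ of the iterated integral, one checks that $\delta_j^{\ast}$ of the corresponding summand of $\fint{\proj{\simplex{}{n}}}\tilde\omega$ equals the matching summand of $\fint{\proj{\simplex{}{n-1}}}((\delta_j\times\id{})^{\ast}\tilde\omega)$. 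Chains whose base direction passes through $j$ should produce vanishing contributions under $\delta_j^{\ast}$: either two successive integration limits collapse, giving $\beta^{[N+1]}-\alpha^{[N+1]}=0$ in the iterated-integral formula, or the base-direction factor $\omega_{\Gamma,i,\bs{}}$ pulls back to zero under the divided-power rule.

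The main obstacle is the term-by-term bookkeeping in this final step. Tracking how $\delta_j^{\ast}$ acts on the boundary limits and the divided-power variables in the iterated-integral formula, while simultaneously accounting for any extra Eilenberg--Zilber degeneracy arising in $(\delta_j\times\id{})^{\ast}\tilde\omega$, is a laborious but purely combinatorial Fubini-type verification. No conceptual input beyond the definitions and constructions reviewed in this section is needed.
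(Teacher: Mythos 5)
The paper offers no argument for this lemma---its ``proof'' is a citation to \cite{kageyama2022higher}---so there is nothing in the text to compare your attempt against; I can only judge the attempt on its merits. The architecture you set up is sound and is the natural one. The surjective part of $\alpha$ is absorbed essentially for free by the Eilenberg--Zilber decomposition built into the definition of $\fint{\proj{\simplex{}{n}}}$, and the epi--mono factorization of $\sigma\alpha$ does reduce everything to a single face operator applied to a non-degenerate form. One small logical remark: you do not need injectivity of $\tau_{0}^{\ast}$ to ``cancel'' it. You have exhibited both sides of the desired identity as $\tau_{0}^{\ast}$ applied to the two sides of the reduced identity, so the reduced identity implies the full one simply by applying $\tau_{0}^{\ast}$; the implication runs in the direction you need without any cancellation. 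Your dichotomy on maximal chains is also the correct one: a chain of $[n]\times[r]$ restricts to a maximal chain of $([n]\setminus\{j\})\times[r]$ exactly when it meets the column $\{j\}\times[r]$ in a single element, i.e.\ has no fiber step there, and the count $\binom{n+r}{r}=\binom{n+r-1}{r}+\binom{n+r-1}{r-1}$ confirms the bijection with maximal chains of $[n-1]\times[r]$. The vanishing mechanism you name is the right one as well: a fiber block sitting at column $j$ is integrated between the limits $x_{\bs{\Gamma}(j)}$ and $x_{\bs{\Gamma}(j+1)}$, which $\bs{\Gamma}^{\ast}$ sends to $x_{j}$ and $x_{j+1}$ and which $\delta_{j}^{\ast}$ then identifies (for $j=0$ or $j=n$ the identified pair is $(\dq,x_{1})$ or $(x_{n},0)$), so the telescoping differences $\beta^{[N+1]}-\alpha^{[N+1]}$ all vanish. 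A check in the smallest case $n=r=1$ confirms exactly this behaviour.

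The one real shortfall is that the entire content of the lemma lives in the step you defer as ``laborious but purely combinatorial'': the term-by-term match between $\delta_{j}^{\ast}$ of the surviving summands and the summands of $\fint{\proj{\simplex{}{n-1}}}((\delta_{j}\times\id{})^{\ast}\tilde{\omega})$, including the compatibility of the splitting $\omega=\sum_{i}\omega_{\Gamma,i,\fs{}}\wedge\Gamma_{\bs{}}^{\ast}\omega_{\Gamma,i,\bs{}}$ with $\delta_{j}^{\ast}$ and the possible extra degeneracy of $(\delta_{j}\times\id{})^{\ast}\tilde{\omega}$. As written, this verification is asserted rather than performed, so the proposal is a correct and detailed plan rather than a complete proof. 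If you want to discharge it with less bookkeeping, note that every operation involved (divided-power iterated integration, the pullbacks $\alpha^{\ast}$, the fiber/base splitting) commutes with the canonical embedding $\Z\langle\dq,x_{1},\dots,x_{n+r}\rangle\hookrightarrow\Q[x_{0},\dots,x_{n+r}]$, under which the formal iterated integral becomes honest integration of polynomial forms over the fibers of the prism triangulation; the identity then reduces to the geometric statement that the triangulation of $\topsimplex{n}\times\topsimplex{r}$ restricts on the face $F_{j}\times\topsimplex{r}$ to the corresponding triangulation of $\topsimplex{n-1}\times\topsimplex{r}$ up to pieces whose fibers are degenerate, and injectivity of the embedding transports the identity back to $\Z\langle\dq\rangle$ coefficients. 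This relocates, rather than eliminates, the verification, but it makes each remaining step a statement about ordinary integrals.
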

\begin{proof}
	See \cite{kageyama2022higher}.
\end{proof}

Let $\omega\colon X\times\simplex{}{r}\to\dR{\simplex{}{-}}{\bullet}{,\Z\langle\dq\rangle}$ be a differential form on $X\times\simplex{}{r}$. From Lemma \ref{Int Lem21}, we obtain a cocone
\begin{equation*}
	\fint{\proj{\simplex{}{\square}}}((-\times\id{})^{\ast}\omega)
		\colon
	\simplex{}{\square}
		\to
	\dR{\simplex{}{-}}{\bullet}{,\Z\langle\dq\rangle}
\end{equation*}
and obtain a differential form $\fint{\proj{X}}\omega\colon X\to\dR{\simplex{}{-}}{\bullet}{,\Z\langle\dq\rangle}$.
\begin{dfn}(simplicial integration)
	Let $\omega\colon X\times\simplex{}{r}\to\dR{\simplex{}{-}}{\bullet}{,\Z\langle\dq\rangle}$ be a differential form on $X\times\simplex{}{r}$. Then, the differential form $\fint{\proj{X}}\omega$ on $X$ is called \Def{the fiberwise integration of $\omega$ along the projection $\proj{X}\colon X\times\simplex{}{r}\to X$}.
\end{dfn}
\subsubsection{Stokes's theorem}
One of the important theorems for integrals on smooth manifolds is Stokes's theorem. This is a theorem that connects the integration of closed form with the integration on the boundary, and it follows that the integration gives a chain map from the de Rham complex to the singular cochain complex. We would like to consider this analogy for fiberwise integration on simplicial sets, but roughly speaking, the following obstacles exist:
\begin{itemize}
	\item The boundary of simplicial set $U$ is unknown in general.
	\item For example, the boundary of standard $2$-simplex $\simplex{}{2}$ is already known as $\boundary{2}$, but the integration $\fint{\proj{X}}(\omega|_{X\times\boundary{2}})$ does not coincide with what we seek.
\end{itemize}
The second problem is considered to be caused by the fact that, unlike the case of smooth manifolds, orientation is not taken into account. In light of simplicial homology, it is presumed that it is suitable to consider the linear combination $\sum_{i=0}^{n}(-1)^{i}\Delta\{0,\dots,\check{i},\dots,n\}$ as ``the boundary of standard $n$-simplex with orientation taken into account''. Since fiberwise integration on a simplicial set is the sum of integration on each simplex, we can consider the following ``integration''.
\begin{dfn}
	Let $\omega\colon X\times\simplex{}{r}\to\dR{\simplex{}{-}}{\bullet}{,\Z\langle\dq\rangle}$ be a formal differential form on $X\times\simplex{}{r}$. The differential form on $X$
	\begin{align*}
		\bint{\proj{X}}\omega
			\coloneqq
		\sum_{i=0}^{r}(-1)^{i}\fint{\proj{X}}((\id{}\times\delta_{i})^{\ast}\omega)
	\end{align*}
	is called \Def{the boundary fiberwise integration of $\omega$ along the projection $\proj{X}\colon X\times\simplex{}{r}\to X$}.
\end{dfn}
\begin{thm}\label{Stokes}
	Let $\omega\colon X\times\simplex{}{r}\to\dR{\simplex{}{-}}{\bullet}{,\Z\langle\dq\rangle}$ be a differential form on $X\times\simplex{}{r}$. Then, the following holds:
	\begin{equation*}
		\fint{\proj{X}}d\omega-\bint{\proj{X}}\omega
			=
		(-1)^{r}d\fint{\proj{X}}\omega
	\end{equation*}
\end{thm}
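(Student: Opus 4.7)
The plan is a two-stage reduction: first to the universal prism case $X=\simplex{}{n}$ using the cocone definition of fiberwise integration, and then chain-by-chain on the canonical triangulation of $\simplex{}{n}\times\simplex{}{r}$. By Lemma \ref{Int Lem21} and the naturality of $d$ under simplicial pullbacks, both sides of the asserted identity are natural cocones, so it suffices to verify the equality when $X=\simplex{}{n}$ and $\omega$ corresponds to a non-degenerate prism form on $\simplex{}{n}\times\simplex{}{r}$. In this case $\fint{\proj{\simplex{}{n}}}\omega$ decomposes as a sum indexed by the maximal chains $\Gamma\colon[n+r]\hookrightarrow[n]\times[r]$ of the triangulation \eqref{chain partition}, and I would prove the identity chain by chain after pulling back to $\simplex{}{n+r}$.

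On each $\simplex{}{n+r}$, split the exterior derivative as $d=d_{\bs{}}+d_{\fs{}}$ according to whether the partial is along a base coordinate $x_{\bs{\Gamma}(j)}$ or a fiber coordinate $x_{\fs{\Gamma}(k)}$. Fiberwise integration registers only the top-fiber-degree component of its integrand. Two kinds of contributions to $d\omega$ therefore matter: (a) $d_{\bs{}}$ applied to the top-fiber component of $\omega$, which introduces a base differential and, after being commuted past the $r$ fiber differentials with sign $(-1)^r$, produces precisely $(-1)^r d\fint{\proj{X}}\omega$; and (b) $d_{\fs{}}$ applied to the fiber-degree $(r-1)$ component of $\omega$, which lifts some $\partial f/\partial x_{\fs{\Gamma}(k)}$ into the top fiber slot. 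By the divided-power fundamental theorem $\int_{0}^{x} y^{[N-1]}\,dy = x^{[N]}$, the iterated fiber integration of (b) collapses to evaluations at the bounds $x_{\us{\Gamma}(k)}$ and $x_{\fs{\Gamma}(k)+1}$, which are pullbacks along face maps of $\simplex{}{n+r}$.

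Each such face of $\simplex{}{n+r}$ is, under the triangulation \eqref{chain partition}, either internal (shared with a unique neighbouring chain $\Gamma'$ with opposite orientation) or external (lying in $\simplex{}{n}\times\boundary{r}$). Summing over all $\Gamma$, the internal contributions cancel pairwise, while the external contributions reassemble, after matching the bound coordinates to the face maps $\delta_i$ with alternating signs, into $\sum_{i=0}^{r}(-1)^{i}\fint{\proj{X}}(\id{}\times\delta_i)^{\ast}\omega = \bint{\proj{X}}\omega$. The main obstacle is the signed combinatorics of this last step: one must verify carefully that the $\us{\Gamma}$ and $\fs{\Gamma}+1$ bounds across neighbouring maximal chains match with opposite orientations, and that the Koszul signs incurred by commuting $d$ past the fiber volume form combine with the alternating boundary signs to cancel interior faces and leave exactly $\bint{\proj{X}}\omega$ on exterior faces. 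The divided-power setting adds no difficulty beyond the polynomial case, since all formal identities used admit identical analogues for divided powers.
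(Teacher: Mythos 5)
The paper itself gives no argument for Theorem \ref{Stokes}: the proof is deferred entirely to the earlier reference \cite{kageyama2022higher}, so there is no in-paper proof to compare yours against. That said, your strategy is the standard (and almost certainly the referenced) one, and I see no step that would fail. The reduction to the universal case $X=\simplex{}{n}$ is legitimate because $\fint{\proj{X}}$, $\bint{\proj{X}}$ and $d$ all commute with pullback along simplicial maps (Lemma \ref{Int Lem21} and its evident analogues), and the Eilenberg--Zilber reduction to non-degenerate prism forms is likewise absorbed by this naturality. Your identification of the two contributions to the top-fiber-degree part of $d\omega$ is correct: the base-direction derivative of the fiber-top component (together with $d$ of the base factor) reproduces $(-1)^{r}d\fint{\proj{X}}\omega$ after commuting past the $r$ fiber differentials, and the fiber-direction derivative of the fiber-degree-$(r-1)$ component collapses, via $\int_{\alpha}^{\beta}x^{[N-1]}\,dx=\beta^{[N]}-\alpha^{[N]}$ applied to the bounds $x_{\us{\Gamma}(k)}$ and $x_{\fs{\Gamma}(k)+1}$, to face restrictions of $\simplex{}{n+r}$; interior faces of the chain decomposition \eqref{chain partition} are shared by exactly two maximal chains with opposite orientation, and the exterior ones lie in $\simplex{}{n}\times\boundary{r}$ and assemble into $\bint{\proj{X}}\omega$. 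You are also right that divided powers cause no extra trouble, since the derivation rule $\theta(x^{[N]})=x^{[N-1]}\theta(x)$ is exactly inverse to the integration rule.

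The one substantive criticism is that what you have written is a plan rather than a proof: you explicitly defer ``the signed combinatorics of this last step,'' and that is where essentially all of the content of the theorem lives. The pairwise cancellation of interior faces with opposite signs, and the matching of the surviving exterior faces to the alternating sum $\sum_{i=0}^{r}(-1)^{i}\fint{\proj{X}}(\id{}\times\delta_{i})^{\ast}\omega$, require an explicit description of which two maximal chains $\Gamma,\Gamma'$ share a given interior face and a verification that the Koszul sign from moving $dx_{\fs{\Gamma}(k)}$ into position agrees with the orientation discrepancy. Until that bookkeeping is carried out (or cited), the argument is an outline of the correct proof, not a complete one.
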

\begin{proof}
	See \cite{kageyama2022higher}.
\end{proof}

\subsection{Iterated Integral}
In this subsection, we define an analogy of Chen's iterated integral for simplicial sets and describe its elementary properties.
Let $X$ be a simplicial set and $\omega_{1},\dots,\omega_{r}\colon X\to\dR{\simplex{}{-}}{\bullet}{,\Z\langle\dq\rangle}$ be differential forms on $X$. Then we obtain a differential form on $X^{r}\coloneqq\underbrace{X\times\dots\times X}_{r}$ as $\proj{1}^{\ast}\omega_{1}\wedge\dots\wedge\proj{r}^{\ast}\omega_{r}$. It gives a differential form on $(X^{\simplex{}{1}})^{r}\times\simplex{}{1}^{r}$ by using a counit $\mathrm{ev}\colon\simplex{}{1}\times(-)^{\simplex{}{1}}\Rightarrow\id{}$ of the adjoint pair $\simplex{}{1}\times-\dashv(-)^{\simplex{}{1}}$. In addition, by using a simplicial map $\iota_{r}\colon\simplex{}{r}\to\simplex{}{1}^{r}$ obtained from an order-preserving map $[r]\to[1]^{r}$ defined as $i\mapsto(\underbrace{1,\dots,1}_{i},0,\dots,0)$ and the diagonal map $X^{\simplex{}{1}}\to(X^{\simplex{}{1}})^{r}$, we obtain a differential form $\omega_{1}\times\dots\times\omega_{r}$ on $X^{\simplex{}{1}}\times\simplex{}{r}$. Then we obtain a differential form on path simplicial set $X^{\simplex{}{1}}$ as a fiberwise integration of $\omega_{1}\times\dots\times\omega_{r}$ along the projection $X^{\simplex{}{1}}\times\simplex{}{r}\to X^{\simplex{}{1}}$. We call it the \Def{iterated integral of $\omega_{1},\dots,\omega_{r}$} and denote it as $\int\omega_{1}\cdots\omega_{r}$. It is precisely an analogy of Chen's iterated integral.
\begin{center}
\begin{tikzpicture}
	\node (03) at (0*4*1, 3*1*1) {$(X^{\simplex{}{1}})^{r}\times\simplex{}{1}^{r}$};
	\node (13) at (1*4*1, 3*1*1) {$(\simplex{}{1}\times X^{\simplex{}{1}})^{r}$};
	\node (02) at (0*4*1, 2*1*1) {$X^{\simplex{}{1}}\times\simplex{}{r}$};
	\node (12) at (1*4*1, 2*1*1) {$X^{r}$};
	\node (22) at (2*4*1, 2*1*1) {$X^{r}$};
	\node (32) at (3*4*1, 2*1*1) {$X$};
	\node (01) at (0*4*1, 1*1*1) {$X^{\simplex{}{1}}\times\simplex{}{r}$};
	\node (11) at (1*4*1, 1*1*1) {$\dR{\simplex{}{-}}{\bullet}{,\Z\langle\dq\rangle}$};
	\node (21) at (2*4*1, 1*1*1) {$\dR{\simplex{}{-}}{\bullet}{,\Z\langle\dq\rangle}^{r}$};
	\node (31) at (3*4*1, 1*1*1) {$\dR{\simplex{}{-}}{\bullet}{,\Z\langle\dq\rangle}$};
	\node (00) at (0*4*1, 0*1*1) {$X^{\simplex{}{1}}$};
	\node (10) at (1*4*1, 0*1*1) {$\dR{\simplex{}{-}}{\bullet}{,\Z\langle\dq\rangle}$};
	\draw[transform canvas={yshift= 1pt}] (03) -- (13);
	\draw[transform canvas={yshift=-1pt}] (03) -- (13);
	\draw[->] (02) --node[left] {$\scriptstyle \text{diagonal}\times\iota_{r}$} (03);
	\draw[->] (13) --node[right] {$\scriptstyle (\mathrm{ev})^{r}$} (12);
	\draw[->] (02) --node[above] {$\scriptstyle \phi_{r}$} (12);
	\draw[transform canvas={yshift= 1pt}] (12) -- (22);
	\draw[transform canvas={yshift=-1pt}] (12) -- (22);
	\path[draw, ->] (22) --node[above] {$\scriptstyle \proj{i}$} (32);
	\draw[transform canvas={xshift= 1pt}] (02) -- (01);
	\draw[transform canvas={xshift=-1pt}] (02) -- (01);
	\draw[->] (12) --node[left] {$\scriptstyle \proj{1}^{\ast}\omega_{1}\wedge\dots\wedge\proj{r}^{\ast}\omega_{r}$} (11);
	\draw[->] (22) --node[right] {$\scriptstyle \sqcap_{i}\proj{i}^{\ast}\omega_{i}$} (21);
	\draw[->] (32) --node[right] {$\scriptstyle \omega_{i}$} (31);
	\draw[->] (01) --node[below] {$\scriptstyle \omega_{1}\times\dots\times\omega_{r}$} (11);
	\draw[->] (21) --node[below] {$\scriptstyle \wedge$} (11);
	\draw[->] (21) --node[below] {$\scriptstyle \proj{i}$} (31);
	\draw[->] (00) --node[above] {$\scriptstyle \int\omega_{1}\dots\omega_{r}$} (10);
\end{tikzpicture}
\end{center}
\begin{prop}\label{differential of iterated integral}
For each homogeneous differential forms $\omega_{1},\dots,\omega_{r}$ on $X$,
\begin{align*}
	d(\int\omega_{1}\dots\omega_{r})
		=&
	\sum_{i=1}^{r}(-1)^{|\omega_{1}|+\dots+|\omega_{i-1}|+r}(\int\omega_{1}\dots d\omega_{i}\dots\omega_{r})
		+
	\sum_{i=1}^{r-1}(-1)^{r-1-i}(\int\omega_{1}\dots(\omega_{i}\wedge\omega_{i+1})\dots\omega_{r})\\
		&+
	(-1)^{(r-1)(|\omega_{1}|-1)}\mathrm{E}_{1}^{\ast}\omega_{1}\wedge(\int\omega_{2}\dots\omega_{r})
		-
	(\int\omega_{1}\dots\omega_{r-1})\wedge\mathrm{E}_{0}^{\ast}\omega_{r}.
\end{align*}
	holds where $\mathrm{E}_{\ep}\colon[\simplex{}{1},X]\to X$ is obtained as a composition $[\simplex{}{1},X]\to[\Delta\{\ep\},X]\cong\simplex{}{0}\times[\simplex{}{0},X]\xrightarrow{\mathrm{ev}}X$ for each $\ep=0,1$.
\end{prop}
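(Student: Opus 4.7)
The plan is to apply Stokes's theorem (Theorem \ref{Stokes}) to the form $\omega_{1}\times\dots\times\omega_{r}$ on $X^{\simplex{}{1}}\times\simplex{}{r}$, whose fiberwise integration along $\proj{X^{\simplex{}{1}}}$ is by definition $\int\omega_{1}\cdots\omega_{r}$. Rearranging Stokes gives
\begin{equation*}
	d\fint{\proj{X^{\simplex{}{1}}}}(\omega_{1}\times\dots\times\omega_{r})
		=
	(-1)^{r}\fint{\proj{X^{\simplex{}{1}}}}d(\omega_{1}\times\dots\times\omega_{r})
		+
	(-1)^{r+1}\bint{\proj{X^{\simplex{}{1}}}}(\omega_{1}\times\dots\times\omega_{r}),
\end{equation*}
so the work reduces to an interior computation using the graded Leibniz rule on $d(\omega_{1}\times\dots\times\omega_{r})$ and a boundary computation unpacking the face sum $\sum_{i=0}^{r}(-1)^{i}\fint{\proj{X^{\simplex{}{1}}}}((\id{}\times\delta_{i})^{\ast}(\omega_{1}\times\dots\times\omega_{r}))$.

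First I would handle the interior term. By construction $\omega_{1}\times\dots\times\omega_{r}$ is a pullback of $\proj{1}^{\ast}\omega_{1}\wedge\dots\wedge\proj{r}^{\ast}\omega_{r}$ along a simplicial map that commutes with $d$, so the graded Leibniz rule produces $\sum_{i=1}^{r}(-1)^{|\omega_{1}|+\dots+|\omega_{i-1}|}\omega_{1}\times\dots\times d\omega_{i}\times\dots\times\omega_{r}$; multiplied by the $(-1)^{r}$ from Stokes and integrated fiberwise, this reproduces the first sum in the claim with the prescribed sign $(-1)^{|\omega_{1}|+\dots+|\omega_{i-1}|+r}$.

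Next I would analyse the boundary sum face by face, using the explicit description of $\iota_{r}\colon i\mapsto(\underbrace{1,\dots,1}_{i},0,\dots,0)$. For $i=0$ the composite $\iota_{r}\circ\delta_{0}$ has first coordinate constantly $1$, so $\omega_{1}$ gets evaluated at the endpoint $1$ of $\simplex{}{1}$, producing the factor $\mathrm{E}_{1}^{\ast}\omega_{1}$, which is pulled back from $X^{\simplex{}{1}}$ and therefore factors through the fiberwise integration; the remaining coordinates recover $\iota_{r-1}$, leaving $\int\omega_{2}\cdots\omega_{r}$. The case $i=r$ is symmetric, with last coordinate constantly $0$, producing $\mathrm{E}_{0}^{\ast}\omega_{r}$ on the right. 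For $0<i<r$ the $i$-th and $(i+1)$-th coordinates of $\iota_{r}\circ\delta_{i}$ jump from $0$ to $1$ simultaneously, so the diagonal identification collapses the two corresponding copies of $X^{\simplex{}{1}}$ and their forms $\omega_{i}$, $\omega_{i+1}$ combine into $\omega_{i}\wedge\omega_{i+1}$, yielding an iterated integral of length $r-1$.

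The main obstacle is the sign bookkeeping. One has to combine coherently the global $(-1)^{r}$ and $(-1)^{r+1}$ from Stokes, the alternating $(-1)^{i}$ of the boundary sum, the Koszul signs arising from reordering $d$ and the $\omega_{j}$, and the additional signs incurred when pulling $\mathrm{E}_{\ep}^{\ast}\omega_{k}$ out of the fiberwise integration via the essentially unique $\mathsf{f}/\mathsf{b}$-decomposition underlying its definition. Matching the result against the stated formula reduces to finitely many sign identities in $|\omega_{1}|,\dots,|\omega_{r}|$ and $r$, which can be verified term by term.
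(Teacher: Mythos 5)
Your proposal follows essentially the same route as the paper's own proof: apply Stokes's theorem (Theorem \ref{Stokes}) to $\omega_{1}\times\dots\times\omega_{r}$, treat the interior term by the graded Leibniz rule, and analyse the boundary sum face by face via the identities for $\proj{j}\iota_{r}\delta_{i}$, with the cases $i=0$, $0<i<r$, $i=r$ producing $\mathrm{E}_{1}^{\ast}\omega_{1}$, the wedge $\omega_{i}\wedge\omega_{i+1}$ (since the $i$-th and $(i+1)$-th coordinates coincide after $\delta_{i}$), and $\mathrm{E}_{0}^{\ast}\omega_{r}$ respectively. The sign bookkeeping you defer is exactly the explicit computation the paper carries out, so the outline is correct and complete in strategy.
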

\begin{proof}
	From Stokes's theorem \ref{Stokes}, the following holds:
	\begin{align*}
		(-1)^{r}d(\int\omega_{1}\dots\omega_{r})
			=&
		\sum_{i=1}^{r}(-1)^{|\omega_{1}|+\dots+|\omega_{i-1}|}\fint{\proj{[\simplex{}{1},X]}}\phi_{r}^{\ast}(\proj{1}^{\ast}\omega_{1}\wedge\dots\wedge\proj{i}^{\ast}d\omega_{i}\wedge\dots\wedge\proj{r}^{\ast}\omega_{r})\\
			&+
		\sum_{i=0}^{r}(-1)^{i+1}\fint{\proj{[\simplex{}{1},X]}}(\id{}\times\delta_{i})^{\ast}\phi_{r}^{\ast}(\proj{1}^{\ast}\omega_{1}\wedge\dots\wedge\proj{r}^{\ast}\omega_{r})
	\end{align*}
	\begin{center}
	\begin{tikzpicture}[auto]
		\node (11) at (0, 4) {$X^{\simplex{}{1}}\times\simplex{}{r-1}$};
		\node (13) at (8, 4) {$\simplex{}{r-1}\times X^{\simplex{}{1}}$};
		\node (21) at (0, 3) {$X^{\simplex{}{1}}\times\simplex{}{r}$};
		\node (22) at (4, 3) {$X^{\simplex{}{1}}\times\simplex{}{r}$};
		\node (23) at (8, 3) {$\simplex{}{r}\times X^{\simplex{}{1}}$};
		\node (31) at (0, 2) {$(X^{\simplex{}{1}})^{r}\times\simplex{}{1}^{r}$};
		\node (32) at (4, 2) {$X^{\simplex{}{1}}\times\simplex{}{1}^{r}$};
		\node (33) at (8, 2) {$\simplex{}{1}^{r}\times X^{\simplex{}{1}}$};
		\node (41) at (0, 1) {$(\simplex{}{1}\times X^{\simplex{}{1}})^{r}$};
		\node (43) at (8, 1) {$\simplex{}{1}\times X^{\simplex{}{1}}$};
		\node (51) at (0, 0) {$X^{r}$};
		\node (53) at (8, 0) {$X$};
		\path[draw, ->] (11) --node[swap] {$\scriptstyle \id{}\times\delta_{i}$} (21);
		\path[draw, ->] (21) --node[swap] {$\scriptstyle \mathrm{diagonal}\times\iota_{r}$} (31);
		\path[draw, transform canvas={xshift=1pt}] (31) -- (41);
		\path[draw, transform canvas={xshift=-1pt}] (31) --node[swap] {\rotatebox{90}{$\scriptstyle \sim$}} (41);
		\path[draw, ->] (41) --node[swap] {$\scriptstyle \mathrm{ev}$} (51);
		\path[draw, ->] (21) -- (-2, 3) --node[swap] {$\scriptstyle \phi_{r}$} (-2, 0) -- (51);
		\path[draw, ->] (22) --node[swap] {$\scriptstyle \id{}\times\iota_{r}$}  (32);
		\path[draw, ->] (13) --node {$\scriptstyle \delta_{i}\times\id{}$} (23);
		\path[draw, ->] (23) --node {$\scriptstyle \iota_{r}\times\id{}$}  (33);
		\path[draw, ->] (33) --node {$\scriptstyle \proj{j}\times\id{}$} (43);
		\path[draw, ->] (43) --node {$\scriptstyle \mathrm{ev}$} (53);
		\path[draw, transform canvas={yshift=1pt}] (21) -- (22);
		\path[draw, transform canvas={yshift=-1pt}] (21) -- (22);
		\path[draw, transform canvas={yshift=1pt}] (22) --node {$\scriptstyle \sim$} (23);
		\path[draw, transform canvas={yshift=-1pt}] (22) -- (23);
		\path[draw, ->] (31) --node {$\scriptstyle \proj{j}\times\id{}$} (32);
		\path[draw, transform canvas={yshift=1pt}] (32) --node {$\scriptstyle \sim$} (33);
		\path[draw, transform canvas={yshift=-1pt}] (32) -- (33);
		\path[draw, transform canvas={yshift=1pt}] (11) --node {$\scriptstyle \sim$} (13);
		\path[draw, transform canvas={yshift=-1pt}] (11) -- (13);
		\path[draw, ->] (41) --node {$\scriptstyle \proj{j}$} (43);
		\path[draw, ->] (51) --node {$\scriptstyle \proj{j}$} (53);
	\end{tikzpicture}
	\end{center}
	For each pair of $i=0,\dots,r$ and $j=1,\dots,r$, respectively, the following holds:
	\begin{align*}
		\proj{j}\iota_{r}\delta_{i}
			=
		\begin{cases}
			\text{constant $1$}&((i,j)=(0,1))\\
			\proj{j-1}\iota_{r-1}&((i,j)\neq(0,1)\text{ and }i<j)\\
			\proj{j}\iota_{r-1}&((i,j)\neq(r,r)\text{ and }i\geq j)\\
			\text{constant $0$}&((i,j)=(r,r))
		\end{cases}.
	\end{align*}
	In addition, the following diagram is commutative:
	\begin{center}
	\begin{tikzpicture}[auto]
		\node (11) at (0, 2) {$\simplex{}{r-1}\times X^{\simplex{}{1}}$};
		\node (12) at (4, 2) {$\simplex{}{0}\times X^{\simplex{}{1}}$};
		\node (13) at (8, 2) {$\simplex{}{1}\times X^{\simplex{}{1}}$};
		\node (21) at (0, 1) {$\simplex{}{r-1}\times X^{\simplex{}{1}}$};
		\node (22) at (4, 1) {$\simplex{}{0}\times X^{\simplex{}{1}}$};
		\node (23) at (8, 1) {$X$};
		\node (31) at (0, 0) {$X^{\simplex{}{1}}$};
		\node (32) at (4, 0) {$X^{\simplex{}{0}}$};
		\node (33) at (8, 0) {$X$};
		\path[draw, ->] (11) -- (12);
		\path[draw, ->] (31) --node[swap] {$\scriptstyle [\delta_{\ep},X]$} (32);
		\path[draw, double, double distance=2pt] (11) -- (21);
		\path[draw, ->] (21) --node[swap] {$\scriptstyle \proj{[\simplex{}{1},X]}$} (31);
		\path[draw, ->] (22) --node[swap] {$\scriptstyle \proj{[\simplex{}{1},X]}$} (32);
		\path[draw, ->] (21) -- (22);
		\path[draw, ->] (12) --node {$\scriptstyle \delta_{\ep}\times\id{}$} (13);
		\path[draw, ->] (12) --node[swap] {$\scriptstyle \simplex{}{0}\times[\delta_{\ep},X]$} (22);
		\path[draw, ->] (13) --node {$\scriptstyle \mathrm{ev}$} (23);
		\path[draw, ->] (22) --node[swap] {$\scriptstyle \mathrm{ev}$} (23);
		\path[draw, ->] (32) -- (33);
		\path[draw, double, double distance=2pt] (23) -- (33);
	\end{tikzpicture}.
	\end{center}
	Therefore
	\begin{align*}
		&\sum_{i=0}^{r}(-1)^{i+1}\fint{\proj{[\simplex{}{1},X]}}(\id{}\times\delta_{i})^{\ast}\phi_{r}^{\ast}(\proj{1}^{\ast}\omega_{1}\wedge\dots\wedge\proj{r}^{\ast}\omega_{r})\\
			=&
		-\fint{\proj{[\simplex{}{1},X]}}\proj{[\simplex{}{1},X]}^{\ast}\mathrm{E}_{1}^{\ast}\omega_{1}\wedge\phi_{r-1}^{\ast}(\proj{1}^{\ast}\omega_{2}\wedge\dots\wedge\proj{r-1}^{\ast}\omega_{r})\\
			&+
		\sum_{i=1}^{r-1}(-1)^{i+1}\fint{\proj{[\simplex{}{1},X]}}\phi_{r-1}^{\ast}(\proj{1}^{\ast}\omega_{1}\wedge\dots\wedge\proj{i}^{\ast}(\omega_{i}\wedge\omega_{i+1})\wedge\dots\wedge\proj{r}^{\ast}\omega_{r})\\
			&+
		(-1)^{r+1}\fint{\proj{[\simplex{}{1},X]}}(\phi_{r-1}^{\ast}(\proj{1}^{\ast}\omega_{1}\wedge\dots\wedge\proj{r-1}^{\ast}\omega_{r-1})\wedge\proj{[\simplex{}{1},X]}^{\ast}\mathrm{E}_{0}^{\ast}\omega_{r})\\
			=&
		\sum_{i=1}^{r-1}(-1)^{i+1}(\int\omega_{1}\dots(\omega_{i}\wedge\omega_{i+1})\dots\omega_{r})\\
			&+
		(-1)^{|\omega_{1}|(|\omega_{2}|+\dots+|\omega_{r}|)+1}(\int\omega_{2}\dots\omega_{r})\wedge\mathrm{E}_{1}^{\ast}\omega_{1}
			-
		(-1)^{r}(\int\omega_{1}\dots\omega_{r-1})\wedge\mathrm{E}_{0}^{\ast}\omega_{r}\\
			=&
		\sum_{i=1}^{r-1}(-1)^{i+1}(\int\omega_{1}\dots(\omega_{i}\wedge\omega_{i+1})\dots\omega_{r})
			+
		(-1)^{1-|\omega_{1}|(r-1)}\mathrm{E}_{1}^{\ast}\omega_{1}\wedge(\int\omega_{2}\dots\omega_{r})
			-
		(-1)^{r}(\int\omega_{1}\dots\omega_{r-1})\wedge\mathrm{E}_{0}^{\ast}\omega_{r}
	\end{align*}
	holds.
\end{proof}
\begin{prop}\label{wedge of iterated integral}
	For each homogeneous differential forms $\omega_{1},\dots,\omega_{p+q}$ on $X$,
	\begin{align*}
		(\int\omega_{1}\dots\omega_{p})\wedge(\int\omega_{p+1}\dots\omega_{p+q})
			=&
		\sum_{\sigma\in\mathsf{Sh}(p,q)}\ep(\sigma;|\omega_{1}|-1,\dots,|\omega_{p+q}|-1)(\int\omega_{\sigma(1)}\dots\omega_{\sigma(p+q)})
	\end{align*}
	holds.
\end{prop}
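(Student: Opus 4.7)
The plan is to recognise the left hand side as a single fiberwise integration over the product fibre $\simplex{}{p}\times\simplex{}{q}$ and then to decompose that fibre via the canonical chain partition \eqref{chain partition} into $(p+q)$-simplices indexed by $(p,q)$-shuffles, each of which yields one term on the right hand side.

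Concretely, first I would establish a Fubini-type identity for fiberwise integration. Pulling the integrands of $\int\omega_{1}\dots\omega_{p}$ and $\int\omega_{p+1}\dots\omega_{p+q}$ back along the canonical projections from $X^{\simplex{}{1}}\times\simplex{}{p}\times\simplex{}{q}$ onto $X^{\simplex{}{1}}\times\simplex{}{p}$ and $X^{\simplex{}{1}}\times\simplex{}{q}$ and wedging them produces a form $\Omega$ whose fiberwise integral along the projection $X^{\simplex{}{1}}\times\simplex{}{p}\times\simplex{}{q}\to X^{\simplex{}{1}}$ equals $(\int\omega_{1}\dots\omega_{p})\wedge(\int\omega_{p+1}\dots\omega_{p+q})$. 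This reduces to the definition of $\fint{}$ through the Eilenberg--Zilber decomposition applied iteratively, together with the projection formula $\fint{\proj{Y}}(\proj{Y}^{\ast}\alpha\wedge\beta)=\alpha\wedge\fint{\proj{Y}}\beta$.

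Next I would invoke \eqref{chain partition} to decompose $\simplex{}{p}\times\simplex{}{q}$ as a union of $(p+q)$-simplices indexed by maximal chains $\Gamma\colon[p+q]\hookrightarrow[p]\times[q]$, which are in bijection with the $(p,q)$-shuffles $\sigma_{\Gamma}\in\mathsf{Sh}(p,q)$. On the $\Gamma$-component, pullback along $\bs{\Gamma}$ and $\fs{\Gamma}$ interleaves the two sequences of forms into the order $\omega_{\sigma_{\Gamma}(1)},\dots,\omega_{\sigma_{\Gamma}(p+q)}$, and the definition of $\fint{}$ on that component returns, up to a sign, exactly the integrand defining $\int\omega_{\sigma_{\Gamma}(1)}\dots\omega_{\sigma_{\Gamma}(p+q)}$. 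Summing over $\Gamma$ then yields the right hand side.

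The principal obstacle is the sign bookkeeping. A homogeneous form $\omega_{i}$ has degree $|\omega_{i}|$, but once it is paired with the single fibre-direction differential $dx_{\fs{\Gamma}(i)}$ that gets integrated out, it effectively behaves like an object of degree $|\omega_{i}|-1$ when transposed past its neighbours. Combining the Koszul signs from graded commutativity with those produced by the fibre-direction differentials and by the orientation of each chain in \eqref{chain partition} should yield precisely $\ep(\sigma_{\Gamma};|\omega_{1}|-1,\dots,|\omega_{p+q}|-1)$. I would handle this by first bringing $\Omega$ into the shuffled order on each $\Gamma$-component, then tracking the extra factors from the anticommutation of fibre differentials, and finally verifying that all residual signs collapse into the claimed shifted Koszul sign, at which point the identity follows term by term.
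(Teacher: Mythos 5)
Your proposal is correct and is essentially the argument the paper has in mind: the paper's proof simply defers to the classical case (``this can be shown in the same way as for iterated integral of differential forms on manifolds''), and the classical proof is exactly the one you describe --- express the product as an integral over the fibre $\simplex{}{p}\times\simplex{}{q}$, decompose via the chain partition \eqref{chain partition} into $(p+q)$-simplices indexed by $(p,q)$-shuffles, and track the Koszul signs with the degrees shifted to $|\omega_{i}|-1$ because each form is paired with one fibre-direction differential. You supply considerably more detail than the paper does, but the route is the same.
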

\begin{proof}
	This can be shown in the same way as for iterated integral of differential forms on manifolds.
\end{proof}
We describe the domain of iterated integral in the next subsection.

\subsection{iterated integral and homotopy pullback}\label{property sii}
Let $\mathcal{A}$ be a commutative cochain algebra. We define a chain complex $\bm{B}\mathcal{A}$ as follows:
\begin{align*}
	\mathbf{B}\mathcal{A}
		&\coloneqq
	\bigoplus_{r=0}^{\infty}
		\mathcal{A}\otimes(\overline{\mathcal{A}}[1])^{\otimes r}\otimes\mathcal{A},\\
	f[g_{1}|\dots|g_{r}]h
		&\coloneqq
	f\otimes g_{1}[1]\otimes\dots\otimes g_{r}[1]\otimes h,\\
	d
		&\coloneqq
	d_{1}+d_{2}\\
	d_{1}(f[g_{1}|\dots|g_{r}]h)
		&\coloneqq
	df[g_{1}|\dots|g_{r}]h\\
		&-
	\sum_{i=1}^{r}(-1)^{|f|+\nu_{i-1}}f[g_{1}|\dots|dg_{i}|\dots|g_{r}]h\\
		&+
	(-1)^{|f|+\nu_{r}}f[g_{1}|\dots|g_{r}]dh,\\
	d_{2}(f[g_{1}|\dots|g_{r}]h)
		&\coloneqq
	(-1)^{|f|}f\wedge g_{1}[g_{2}|\dots|g_{r}]h\\
		&+
	\sum_{i=1}^{r-1}(-1)^{|f|+\nu_{i}}f[g_{1}|\dots|g_{i}\wedge g_{i+1}|\dots|g_{r}]h\\
		&+
	(-1)^{|f|+\nu_{r-1}+1}f[g_{1}|\dots|g_{r-1}]g_{r}\wedge h,\\
	\nu_{i}
		&\coloneqq
	|g_{1}|+\dots+|g_{i}|-i.
\end{align*}
Then we obtain a canonical surjective quasi-isomorphism $\bm{B}\mathcal{A}\to\mathcal{A}$. In addition, we can define a shuffle product as
\begin{align*}
	(f[g_{1}|\dots|g_{p}]h)\wedge(f'[g_{p+1}|\dots|g_{p+q}]h')
		&\coloneqq
	\sum_{\sigma\in\mathsf{Sh}(p,q)}(\pm)\ep(\sigma,|g_{1}|-1,\dots,|g_{p+q}|-1)f\wedge f'[g_{\sigma(1)}|\dots|g_{\sigma(p+q)}]h\wedge h',\\
	(\pm)
		&\coloneqq
	(-1)^{|h|\cdot|f'|+(|g_{1}|+\dots+|g_{p}|-p)|f'|+|h|(|g_{p+1}|+\dots+|g_{p+q}|-q)}.
\end{align*}
The product gives a commutative cochain algebra $\bm{B}\mathcal{A}$ and a ring morphism $\bm{B}\mathcal{A}\to\mathcal{A}$. It gives a factorization
\begin{equation*}
	\mathcal{A}\otimes\mathcal{A}
		\to
	\mathbf{B}\mathcal{A}
		\to
	\mathcal{A}
\end{equation*}
of the diagonal map $\mathcal{A}\otimes\mathcal{A}\to\mathcal{A}$.

Let $X$ be a simlicial set. Then, the diagonal map $X\to X\times X$ can be factorized into
\begin{equation*}
	X
		\to
	X^{\simplex{}{1}}
		\to
	X\times X.
\end{equation*}
We obtain the following commutative diagram by considering de Rham algebras:
\begin{center}
\begin{tikzpicture}
	\node (01) at (0*3*1.25, 1*1.25) {$\mathcal{A}(X)$};
	\node (00) at (0*3*1.25, 0*1.25) {$\mathcal{A}(X^{\simplex{}{1}})$};
	\node (10) at (1*3*1.25, 0*1.25) {$\mathcal{A}(X\times X)$};
	\node (21) at (2*3*1.25, 1*1.25) {$\mathbf{B}\mathcal{A}(X)$};
	\node (20) at (2*3*1.25, 0*1.25) {$\mathcal{A}(X)\otimes\mathcal{A}(X)$};
	\path[draw, ->>] (00) --node[right] {$\scriptstyle \sim$} (01);
	\path[draw, ->] (10) -- (00);
	\path[draw, ->] (20) --node[above] {$\scriptstyle \sim$} (10);
	\path[draw, ->>] (21) --node[above] {$\scriptstyle \sim$} (01);
	\path[draw, ->] (20) -- (21);
\end{tikzpicture}
\end{center}
\begin{prop}
	The iterated integral on a simplicial set gives a morphism of commutative cochain algebra
	\begin{align*}
		\mathbb{I}
			\colon
		\mathbf{B}\mathcal{A}(X)
			&\to
		\mathcal{A}(X^{\simplex{}{1}})\\
		\omega_{0}[\omega_{1}|\dots|\omega_{r}]\omega_{r+1}
			&\mapsto
		(-1)^{\sum_{i=1}^{r}(r-i)(|\omega_{i}|-1)}
		(\mathsf{E}_{1}^{\ast}\omega_{0})
			\wedge
		(\int\omega_{1}\dots\omega_{r})
			\wedge
		(\mathsf{E}_{0}^{\ast}\omega_{r+1})
	\end{align*}
	such that the following diagram is commutative:
	\begin{center}
	\begin{tikzpicture}
		\node (02) at (0*3*1.25, 2*1.25) {$\mathcal{A}(X)$};
		\node (01) at (0*3*1.25, 1*1.25) {$\mathcal{A}(X^{\simplex{}{1}})$};
		\node (00) at (0*3*1.25, 0*1.25) {$\mathcal{A}(X)\otimes\mathcal{A}(X)$};
		\node (12) at (1*3*1.25, 2*1.25) {$\mathcal{A}(X)$};
		\node (11) at (1*3*1.25, 1*1.25) {$\mathbf{B}\mathcal{A}(X)$};
		\node (10) at (1*3*1.25, 0*1.25) {$\mathcal{A}(X)\otimes\mathcal{A}(X)$};
		\path[draw, transform canvas={yshift= 1pt}] (12) -- (02);
		\path[draw, transform canvas={yshift=-1pt}] (12) -- (02);
		\path[draw, ->] (11) --node[above] {$\scriptstyle \int$} (01);
		\path[draw, ->] (10) --node[above] {$\scriptstyle \sim$} (00);
		\path[draw, ->] (10) -- (11);
		\path[draw, ->>] (11) --node[right] {$\scriptstyle \sim$} (12);
		\path[draw, ->] (00) -- (01);
		\draw[->>] (01) --node[right] {$\scriptstyle \sim$} (02);
	\end{tikzpicture}.
	\end{center}
\end{prop}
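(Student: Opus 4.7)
The plan is to establish three claims separately: that $\mathbb{I}$ is a chain map, that $\mathbb{I}$ is multiplicative (i.e.\ compatible with the shuffle product on $\mathbf{B}\mathcal{A}(X)$ and the wedge product on $\mathcal{A}(X^{\simplex{}{1}})$), and that the stated diagram commutes. The global sign $(-1)^{\sum_{i=1}^{r}(r-i)(|\omega_i|-1)}$ built into $\mathbb{I}$ has been chosen precisely so that the Koszul signs appearing in Proposition~\ref{differential of iterated integral} and Proposition~\ref{wedge of iterated integral}, which use the unshifted degrees of the $\omega_i$, transform into the bar-complex signs, which are governed by the shifted degrees $|\omega_i|-1$.

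For the chain-map property, I would apply the Leibniz rule to
\[
	d\bigl((\mathsf{E}_1^{\ast}\omega_0)\wedge(\textstyle\int\omega_1\cdots\omega_r)\wedge(\mathsf{E}_0^{\ast}\omega_{r+1})\bigr)
\]
and substitute Proposition~\ref{differential of iterated integral} into the middle factor. The $d\omega_i$ terms produced match the $d_1$-part of the bar differential; the adjacent-product terms $\omega_i\wedge\omega_{i+1}$ match the interior summands of $d_2$; and the endpoint correction terms $\mathsf{E}_1^{\ast}\omega_1\wedge(\int\omega_2\cdots\omega_r)$ and $(\int\omega_1\cdots\omega_{r-1})\wedge\mathsf{E}_0^{\ast}\omega_r$ fuse, via multiplicativity of $\mathsf{E}_\varepsilon^{\ast}$, with the pre-existing factors $\mathsf{E}_1^{\ast}\omega_0$ and $\mathsf{E}_0^{\ast}\omega_{r+1}$ to produce the two outermost $d_2$-summands $f\wedge g_1[g_2|\cdots]h$ and $f[\cdots|g_{r-1}]g_r\wedge h$. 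For multiplicativity, Proposition~\ref{wedge of iterated integral} expresses the wedge of two iterated integrals as a shuffle sum with the same sign convention used to define the shuffle product on $\mathbf{B}\mathcal{A}(X)$; the outer factors $\mathsf{E}_1^{\ast}\omega_0$ and $\mathsf{E}_0^{\ast}\omega_{r+1}$ pass outside the shuffle since pullback commutes with wedge and both bar-tensors share the same evaluation maps.

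For the commutative diagram, the specialization of $\mathbb{I}$ to bar-degree zero ($r=0$) yields $\omega_0\otimes\omega_1\mapsto\mathsf{E}_1^{\ast}\omega_0\wedge\mathsf{E}_0^{\ast}\omega_1$, which is exactly the pullback of $\omega_0\otimes\omega_1$ along the factorization $X^{\simplex{}{1}}\to X\times X$ of the diagonal; this yields the bottom triangle. The top triangle, that $\mathcal{A}(X^{\simplex{}{1}})\twoheadrightarrow\mathcal{A}(X)$ composed with $\mathbb{I}$ equals the canonical augmentation $\mathbf{B}\mathcal{A}(X)\to\mathcal{A}(X)$, follows from the fact that iterated integrals of length $r\geq 1$ vanish when pulled back to the subcomplex of constant paths, since the integrand becomes constant along the fiber $\simplex{}{r}$ and hence its fiberwise integral is zero. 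The main obstacle throughout will be the sign bookkeeping: one must verify by induction on $r$ that the prefactor $(-1)^{\sum_{i=1}^r(r-i)(|\omega_i|-1)}$ converts the signs in the two cited propositions into the bar-complex signs exactly, which is the only non-routine step.
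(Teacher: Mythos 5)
Your proposal follows essentially the same route as the paper: the chain-map property is obtained by applying the Leibniz rule and substituting Proposition \ref{differential of iterated integral} into the middle factor (with the endpoint terms absorbed into $\mathsf{E}_{1}^{\ast}\omega_{0}$ and $\mathsf{E}_{0}^{\ast}\omega_{r+1}$), multiplicativity comes from Proposition \ref{wedge of iterated integral}, and the commutativity of the diagram is the routine check the paper dismisses as clear. Your extra detail on the two triangles (the $r=0$ specialization and the vanishing of length-$r\geq1$ iterated integrals on constant paths) is correct and only makes explicit what the paper leaves implicit.
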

\begin{proof}
	From proposition \ref{differential of iterated integral}, the following hold:
	\begin{align*}
		d\mathbb{I}(\omega_{0}[\omega_{1}|\dots|\omega_{r}]\omega_{r+1})
			&=
		(-1)^{\sum_{i=1}^{r}(r-i)(|\omega_{i}|-1)}d((\mathsf{E}_{1}^{\ast}\omega_{0})\wedge(\int\omega_{1}\dots\omega_{r})\wedge(\mathsf{E}_{0}^{\ast}\omega_{r+1}))\\
			&=
		(-1)^{\sum_{i=1}^{r}(r-i)(|\omega_{i}|-1)}(\mathsf{E}_{1}^{\ast}d\omega_{0})\wedge(\int\omega_{1}\dots\omega_{r})\wedge(\mathsf{E}_{0}^{\ast}\omega_{r+1})\\
			&+
		(-1)^{\sum_{i=1}^{r}(r-i)(|\omega_{i}|-1)+|\omega_{0}|}(\mathsf{E}_{1}^{\ast}\omega_{0})\wedge d(\int\omega_{1}\dots\omega_{r})\wedge(\mathsf{E}_{0}^{\ast}\omega_{r+1})\\
			&+
		(-1)^{\sum_{i=1}^{r}(r-i)(|\omega_{i}|-1)+|\omega_{0}|+\sum_{i=1}^{r}(|\omega_{i}|-1)}(\mathsf{E}_{1}^{\ast}\omega_{0})\wedge(\int\omega_{1}\dots\omega_{r})\wedge(\mathsf{E}_{0}^{\ast}d\omega_{r+1})\\
			&=
		(-1)^{\sum_{i=1}^{r}(r-i)(|\omega_{i}|-1)}(\mathsf{E}_{1}^{\ast}d\omega_{0})\wedge(\int\omega_{1}\dots\omega_{r})\wedge(\mathsf{E}_{0}^{\ast}\omega_{r+1})\\
			&+
		(-1)^{\sum_{i=1}^{r}(r-i)(|\omega_{i}|-1)+|\omega_{0}|}(\mathsf{E}_{1}^{\ast}\omega_{0})\wedge 
		(\sum_{i=1}^{r}(-1)^{|\omega_{1}|+\dots+|\omega_{i-1}|+r}(\int\omega_{1}\dots d\omega_{i}\dots\omega_{r}))
		\wedge(\mathsf{E}_{0}^{\ast}\omega_{r+1})\\
			&+
		(-1)^{\sum_{i=1}^{r}(r-i)(|\omega_{i}|-1)+|\omega_{0}|}(\mathsf{E}_{1}^{\ast}\omega_{0})\wedge 
		(\sum_{i=1}^{r-1}(-1)^{r-1-i}(\int\omega_{1}\dots(\omega_{i}\wedge\omega_{i+1})\dots\omega_{r}))
		\wedge(\mathsf{E}_{0}^{\ast}\omega_{r+1})\\
			&+
		(-1)^{\sum_{i=1}^{r}(r-i)(|\omega_{i}|-1)+|\omega_{0}|}(\mathsf{E}_{1}^{\ast}\omega_{0})\wedge 
		((-1)^{(r-1)(|\omega_{1}|-1)}\mathrm{E}_{1}^{\ast}\omega_{1}\wedge(\int\omega_{2}\dots\omega_{r}))
		\wedge(\mathsf{E}_{0}^{\ast}\omega_{r+1})\\
			&-
		(-1)^{\sum_{i=1}^{r}(r-i)(|\omega_{i}|-1)+|\omega_{0}|}(\mathsf{E}_{1}^{\ast}\omega_{0})\wedge 
		((\int\omega_{1}\dots\omega_{r-1})\wedge(\mathrm{E}_{0}^{\ast}\omega_{r}))
		\wedge(\mathsf{E}_{0}^{\ast}\omega_{r+1})\\
			&+
		(-1)^{\sum_{i=1}^{r}(r-i)(|\omega_{i}|-1)+|\omega_{0}|+\sum_{i=1}^{r}(|\omega_{i}|-1)}(\mathsf{E}_{1}^{\ast}\omega_{0})\wedge(\int\omega_{1}\dots\omega_{r})\wedge(\mathsf{E}_{0}^{\ast}d\omega_{r+1})\\
			&=
		\mathbb{I}d(\omega_{0}[\omega_{1}|\dots|\omega_{r}]\omega_{r+1}).
	\end{align*}
	Therefore it is a cochain map. From proposition \ref{wedge of iterated integral}, it is an algebra map. As for the rest, it is clear.
\end{proof}
At this time, we describe that the (simplicial) iterated integral
\begin{equation*}
	\int
		\colon
	\Z\langle\dq\rangle\bigotimes_{\dR{X}{\bullet}{,\Z\langle\dq\rangle}\otimes\dR{X}{\bullet}{,\Z\langle\dq\rangle}}\bm{B}\dR{X}{\bullet}{,\Z\langle\dq\rangle}
		\to
	\dR{\{x\}\prod_{X\times X}X^{\simplex{}{1}}}{\bullet}{,\Z\langle\dq\rangle}
\end{equation*}
is given by some universal property. The diagonal map $X\to X\times X$ and a $0$-simplex $x\in X_{0}$ give a Cartesian diagrams
\begin{center}
\begin{tikzpicture}
	\node (23) at (2*6, 3*1) {$\displaystyle X$};
	\node (22) at (2*6, 2*1) {$\displaystyle X^{\simplex{}{1}}$};
	\node (20) at (2*6, 0*1) {$\displaystyle X\times X$};
	\node (12) at (1*6, 2*1) {$\displaystyle X\prod^{}_{X\times X}X^{\simplex{}{1}}$};
	\node (10) at (1*6, 0*1) {$\displaystyle X$};
	\node (02) at (0*6, 2*1) {$\displaystyle \{x\}\prod^{}_{X\times X}X^{\simplex{}{1}}$};
	\node (00) at (0*6, 0*1) {$\displaystyle \{x\}$};
	\path[draw, right hook->] (23) --node[right] {$\scriptstyle \sim$} (22);
	\path[draw, ->] (22) -- (20);
	\path[draw, ->] (10) -- (20);
	\path[draw, ->] (12) -- (10);
	\path[draw, ->] (12) -- (22);
	\path[draw, ->] (00) -- (10);
	\path[draw, ->] (02) -- (00);
	\path[draw, ->] (02) -- (12);
\end{tikzpicture}.
\end{center}
By considering the de Rham algebras of them, we obtain a commutative diagram
\begin{center}
\begin{tikzpicture}
	\node (23) at (2*6, 3*1) {$\displaystyle \dR{X}{\bullet}{,\Z\langle\dq\rangle}$};
	\node (22) at (2*6, 2*1) {$\displaystyle \dR{X^{\simplex{}{1}}}{\bullet}{,\Z\langle\dq\rangle}$};
	\node (20) at (2*6, 0*1) {$\displaystyle \dR{X\times X}{\bullet}{,\Z\langle\dq\rangle}$};
	\node (12) at (1*6, 2*1) {$\displaystyle \dR{X\prod^{}_{X\times X}X^{\simplex{}{1}}}{\bullet}{,\Z\langle\dq\rangle}$};
	\node (10) at (1*6, 0*1) {$\displaystyle \dR{X}{\bullet}{,\Z\langle\dq\rangle}$};
	\node (02) at (0*6, 2*1) {$\displaystyle \dR{\{x\}\prod^{}_{X\times X}X^{\simplex{}{1}}}{\bullet}{,\Z\langle\dq\rangle}$};
	\node (00) at (0*6, 0*1) {$\displaystyle \Z\langle\dq\rangle$};
	\path[draw, <<-] (23) --node[right] {$\scriptstyle \sim$} (22);
	\path[draw, <-] (22) -- (20);
	\path[draw, <-] (10) -- (20);
	\path[draw, <-] (12) -- (10);
	\path[draw, <-] (12) -- (22);
	\path[draw, <-] (00) -- (10);
	\path[draw, <-] (02) -- (00);
	\path[draw, <-] (02) -- (12);
\end{tikzpicture}.
\end{center}
Then, we obtain the following diagram:
\begin{center}
\begin{tikzpicture}
	\node (26) at (2*2.5, 6*1.5) {$\displaystyle \dR{X}{\bullet}{,\Z\langle\dq\rangle}$};
	\node (25) at (2*2.5, 5*1.5) {$\displaystyle \dR{X^{\simplex{}{1}}}{\bullet}{,\Z\langle\dq\rangle}$};
	\node (22) at (2*2.5, 2*1.5) {$\displaystyle \dR{X\times X}{\bullet}{,\Z\langle\dq\rangle}$};
	\draw[<<-] (26) --node {$\scriptstyle \sim$} (25);
	\draw[<-] (25) -- (22);
	\node (56) at (5*2.5, 6*1.5) {$\displaystyle \dR{X}{\bullet}{,\Z\langle\dq\rangle}$};
	\node (55) at (5*2.5, 5*1.5) {$\displaystyle \mathbf{B}\dR{X}{\bullet}{,\Z\langle\dq\rangle}$};
	\node (52) at (5*2.5, 2*1.5) {$\displaystyle \dR{X}{\bullet}{,\Z\langle\dq\rangle}\otimes\dR{X}{\bullet}{,\Z\langle\dq\rangle}$};
	\draw[<<-] (56) --node {$\scriptstyle \sim$} (55);
	\draw[<-] (55) -- (52);
	\draw[transform canvas={yshift= 1pt}] (26) -- (56);
	\draw[transform canvas={yshift=-1pt}] (26) -- (56);
	\draw[<-] (22) --node[above] {$\scriptstyle \sim$} (52);
	\draw[<-] (25) --node[above] {$\scriptstyle \sim$} (55);
	\path (25) --node[below] {$\scriptstyle \mathbb{I}$} (55);
	\node (11) at (1*2.5, 1*1.5) {$\displaystyle \dR{X}{\bullet}{,\Z\langle\dq\rangle}$};
	\node (00) at (0*2.5, 0*1.5) {$\displaystyle \dR{\{x\}}{\bullet}{,\Z\langle\dq\rangle}$};
	\draw[->] (22) -- (11);
	\draw[->] (11) -- (00);
	\node (41) at (4*2.5, 1*1.5) {$\displaystyle \dR{X}{\bullet}{,\Z\langle\dq\rangle}$};
	\node (30) at (3*2.5, 0*1.5) {$\displaystyle \Z\langle\dq\rangle$};
	\draw[->] (52) -- (41);
	\draw[->] (41) -- (30);
	\path[draw, transform canvas={yshift= 1pt}] (11) -- (41);
	\path[draw, transform canvas={yshift=-1pt}] (11) -- (41);
	\path[draw, transform canvas={yshift= 1pt}] (00) -- (30);
	\path[draw, transform canvas={yshift=-1pt}] (00) -- (30);
	\node (14) at (1*2.5, 4*1.5) {$\displaystyle \dR{X\prod^{}_{X\times X}X^{\simplex{}{1}}}{\bullet}{,\Z\langle\dq\rangle}$};
	\node (03) at (0*2.5, 3*1.5) {$\displaystyle \dR{\{x\}\prod^{}_{X\times X}X^{\simplex{}{1}}}{\bullet}{,\Z\langle\dq\rangle}$};
	\draw[<-] (14) -- (11);
	\draw[<-] (03) -- (00);
	\draw[->] (25) -- (14);
	\draw[->] (14) -- (03);
	\node (44) at (4*2.5, 4*1.5) {$\displaystyle \dR{X}{\bullet}{,\Z\langle\dq\rangle}\bigotimes^{}_{\dR{X}{\bullet}{,\Z\langle\dq\rangle}\otimes\dR{X}{\bullet}{,\Z\langle\dq\rangle}}\bm{B}\dR{X}{\bullet}{,\Z\langle\dq\rangle}$};
	\node (33) at (3*2.5, 3*1.5) {$\displaystyle \Z\langle\dq\rangle\bigotimes^{}_{\dR{X}{\bullet}{,\Z\langle\dq\rangle}\otimes\dR{X}{\bullet}{,\Z\langle\dq\rangle}}\bm{B}\dR{X}{\bullet}{,\Z\langle\dq\rangle}$};
	\draw[<-] (44) -- (41);
	\draw[<-] (33) -- (30);
	\draw[->] (55) -- (44);
	\draw[->] (44) -- (33);
	\draw[<-, dashed] (14) --node[below] {$\scriptstyle \int$} (44);
	\draw[<-, dashed] (03) --node[below] {$\scriptstyle \int$} (33);
\end{tikzpicture}.
\end{center}
The induced map is exactly the iterated integral (on a simplicial set).

Let $\K$ be a field of characteristic $0$. For each $n\geq0$, there exists a natural morphism $\Z\langle\dq,x_{1},\dots,x_{n}\rangle\to\K[x_{1},\dots,x_{n}]$ determined by $\dq^{[N]}\mapsto\frac{1}{N!}$ and $x_{i}^{[N]}\mapsto\frac{x_{i}^{N}}{N!}$. It gives a simplicial commutative cochain algebra $\dR{\simplex{}{-}}{\bullet}{,\K}$ defined by
\begin{equation*}
	\dR{\simplex{}{n}}{\bullet}{,\K}
		\coloneqq
	\dR{\simplex{}{n}}{\bullet}{,\Z\langle\dq\rangle}_{\Z\langle\dq,x_{1},\dots,x_{n}\rangle}\K[x_{1},\dots,x_{n}].
\end{equation*}
It gives a commutative cochain algebra $\dR{X}{\bullet}{,\K}$ for any simplicial set. The fiberwise integration (on simplicial sets) can be defined in the same way when the coefficient ring is $\K$. Thus, we can define morphisms
\begin{align*}
	\bm{B}\dR{X}{\bullet}{,\K}
		&\to
	\dR{X^{\simplex{}{1}}}{\bullet}{,\K},\\
	\dR{X}{\bullet}{,\K}\bigotimes^{}_{\dR{X}{\bullet}{,\K}\otimes\dR{X}{\bullet}{,\K}}\bm{B}\dR{X}{\bullet}{,\K}
		&\to
	\dR{X\prod_{X\times X}X^{\simplex{}{1}}}{\bullet}{,\K},\\
	\K\bigotimes^{}_{\dR{X}{\bullet}{,\K}\otimes\dR{X}{\bullet}{,\K}}\bm{B}\dR{X}{\bullet}{,\K}
		&\to
	\dR{\{x\}\prod_{X\times X}X^{\simplex{}{1}}}{\bullet}{,\K}.
\end{align*}
\begin{thm}\label{iterated integral and homotopy pullback}
	The simplicial ($\K$ coefficient) iterated integral (that is the above maps) gives a representation of the map which induced by the universality of homotopy pushout.
\end{thm}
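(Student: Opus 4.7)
The statement should be read as saying that each of the three morphisms is, up to canonical quasi-isomorphism, the comparison map produced by the universal property of the homotopy pushout in commutative $\K$-cochain algebras, once the source is identified with a derived pushout and the target with the de~Rham algebra of a homotopy pullback of simplicial sets. Accordingly the plan has three parts: recognise the left-hand sides as derived pushouts via the bar construction, recognise the right-hand sides as de~Rham algebras of honest homotopy pullbacks, and identify $\mathbb{I}$ with the induced universal map.

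For the first part, the bar complex $\bm{B}\mathcal{A}=\bigoplus_{r\geq 0}\mathcal{A}\otimes(\overline{\mathcal{A}}[1])^{\otimes r}\otimes\mathcal{A}$ carries a word-length filtration whose associated graded is the free commutative $(\mathcal{A}\otimes\mathcal{A})$-algebra on $\overline{\mathcal{A}}[1]^{\otimes\bullet}$. Combined with the surjective quasi-isomorphism $\bm{B}\mathcal{A}\twoheadrightarrow\mathcal{A}$ already established in the excerpt, this exhibits $\bm{B}\mathcal{A}$ as a semi-free (hence cofibrant) resolution of $\mathcal{A}$ viewed as a commutative $(\mathcal{A}\otimes\mathcal{A})$-algebra in the Hinich model structure on commutative cochain algebras over $\K$. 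Consequently the ordinary tensor products $\mathcal{A}\otimes_{\mathcal{A}\otimes\mathcal{A}}\bm{B}\mathcal{A}$ and $\K\otimes_{\mathcal{A}\otimes\mathcal{A}}\bm{B}\mathcal{A}$ compute the derived pushouts of $\mathcal{A}\leftarrow\mathcal{A}\otimes\mathcal{A}\to\mathcal{A}$ and $\K\leftarrow\mathcal{A}\otimes\mathcal{A}\to\mathcal{A}$, respectively.

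For the second part, the cofibration $\boundary{1}\hookrightarrow\simplex{}{1}$ induces a Kan fibration $X^{\simplex{}{1}}\twoheadrightarrow X^{\boundary{1}}\cong X\times X$ whenever $X$ is Kan, so the strict pullbacks $X\prod_{X\times X}X^{\simplex{}{1}}$ and $\{x\}\prod_{X\times X}X^{\simplex{}{1}}$ appearing in the excerpt already are homotopy pullbacks. Applying the contravariant functor $\dR{-}{\bullet}{,\K}$, together with the Eilenberg--Zilber compatibility $\dR{X\times Y}{\bullet}{,\K}\simeq\dR{X}{\bullet}{,\K}\otimes\dR{Y}{\bullet}{,\K}$ for the simplicial cdgas in play, converts these homotopy pullbacks into homotopy pushouts of commutative cochain algebras, matching the targets of the three displayed maps with the derived pushouts produced in step one.

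For the third part, the proposition preceding the theorem exhibits $\mathbb{I}\colon\bm{B}\dR{X}{\bullet}{,\K}\to\dR{X^{\simplex{}{1}}}{\bullet}{,\K}$ as a morphism of cdgas fitting into a commutative square over the diagonal $\dR{X}{\bullet}{,\K}\otimes\dR{X}{\bullet}{,\K}\to\dR{X\times X}{\bullet}{,\K}$ and under the augmentations to $\dR{X}{\bullet}{,\K}$. Combined with the first two steps, the universal property of the derived pushout determines $\mathbb{I}$ uniquely up to quasi-isomorphism as the comparison map, and base change along $\dR{X}{\bullet}{,\K}\otimes\dR{X}{\bullet}{,\K}\to\dR{X}{\bullet}{,\K}$ and along $\dR{X}{\bullet}{,\K}\otimes\dR{X}{\bullet}{,\K}\to\K$ produces the second and third iterated-integral maps by naturality. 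The main obstacle I anticipate lies in the cofibrancy assertion of step one: one must verify that the word-length filtration on $\bm{B}\mathcal{A}$ is a filtration by commutative-algebra cells and not merely by chain-complex cells, which amounts to producing a compatible Hirsch-extension presentation and then lifting it to the Hinich model structure.
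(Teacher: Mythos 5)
Your proposal is correct and follows essentially the same route as the paper: identify the bar-construction tensor products with the derived tensor products (hence with homotopy pushouts of commutative cochain algebras over a characteristic-zero field), identify the targets as de Rham algebras of homotopy pullbacks, and invoke the universal property. The paper's own proof is just a two-sentence assertion of these facts, so your version supplies strictly more justification (the semi-freeness of $\bm{B}\mathcal{A}$, the Kan-fibration argument for $X^{\simplex{}{1}}\to X\times X$, and the cofibrancy subtlety you flag at the end) than the original.
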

\begin{proof}
	By assumption, the algebra
	\begin{equation*}
		\dR{X}{\bullet}{,\K}\bigotimes^{}_{\dR{X}{\bullet}{,\K}\otimes\dR{X}{\bullet}{,\K}}\bm{B}\dR{X}{\bullet}{,\K}
	\end{equation*}
	coincide with the derived tensor $\dR{X}{\bullet}{,\K}\bigotimes^{\bm{L}}_{\dR{X}{\bullet}{,\K}\otimes\dR{X}{\bullet}{,\K}}\dR{X}{\bullet}{,\K}$, and the algebra
	\begin{equation*}
		\K\bigotimes^{}_{\dR{X}{\bullet}{,\K}\otimes\dR{X}{\bullet}{,\K}}\bm{B}\dR{X}{\bullet}{,\K}
	\end{equation*}
	coincide with the derived tensor $\K\bigotimes^{\bm{L}}_{\dR{X}{\bullet}{,\K}\otimes\dR{X}{\bullet}{,\K}}\dR{X}{\bullet}{,\K}$. The rest immediately follow from the definition of homotopy pushout (of commutative cochain algebras over a field whose character is $0$).
\end{proof}
\begin{obs}[The displacement of two types of ``loop spaces'']\label{iterated integral and homotopy pullback}
	In the opposite category $\mathsf{CDGA}(\R)\opposite$, the derived tensor product is the homotopy pullback. Therefore, roughly speaking, \bf the (simplicial) iterated integral is a map comparing two ``homotopy pullbacks'', one is a ``spacial loop space'' and another one is an ``algebraic loop space''\rm . (The following is an informal diagram in the opposite category $\mathsf{CDGA}\opposite$.)
	\begin{center}
	\begin{tikzpicture}
		\node (13) at (1*3, 3*1.0) {$\displaystyle \mathcal{A}(X)$};
		\node (11) at (1*3, 1*1.0) {$\displaystyle \mathcal{A}(X\times X)$};
		\node (00) at (0*3, 0*1.0) {$\displaystyle \mathcal{A}(X)$};
		\node (02) at (0*3, 2*1.0) {$\displaystyle \mathcal{A}(X\prod^{\bm{R}}_{X\times X}X)$};
		\draw[->] (13) -- (11);
		\draw[->] (00) -- (11);
		\draw[->] (02) -- (00);
		\draw[->] (02) -- (13);
		\node (20) at (2*3, 0*1.0) {$\displaystyle \mathcal{A}(X)$};
		\node (31) at (3*3, 1*1.0) {$\displaystyle \mathcal{A}(X)\otimes\mathcal{A}(X)$};
		\node (33) at (3*3, 3*1.0) {$\displaystyle \mathcal{A}(X)$};
		\draw[double, double distance=2pt] (13) -- (33);
		\draw[->] (11) --node[above, pos=0.3] {$\scriptstyle \simeq$} (31);
		\draw[double, double distance=2pt] (00) -- (20);
		\draw[->] (33) -- (31);
		\draw[->] (20) -- (31);
		\node (22) at (2*3, 2*1.0) {$\displaystyle \mathcal{A}(X)\prod^{\bm{R}}_{\mathcal{A}(X)\otimes\mathcal{A}(X)}\mathcal{A}(X)$};
		\draw[white, line width=2pt] (22) -- (20);
		\draw[->] (22) -- (20);
		\draw[->] (22) -- (33);
		\draw[white, line width=2pt] (02) -- (22);
		\draw[->, dashed] (02) -- (22);
	\end{tikzpicture}
	\end{center}
\end{obs}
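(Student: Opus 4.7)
The plan is to unpack the proof sketched in the statement by verifying the two ingredients on which it rests: (i) that the explicit bar construction $\mathbf{B}\dR{X}{\bullet}{,\K}$ really does compute the derived tensor product over $\dR{X}{\bullet}{,\K}\otimes\dR{X}{\bullet}{,\K}$, and (ii) that the comparison map constructed out of the iterated integral is the map mandated by the universal property of the homotopy pushout (equivalently, of the homotopy pullback in the opposite category). Since the preceding proposition already gives that the iterated integral is a morphism of commutative cochain algebras compatible with the factorization $\mathcal{A}\otimes\mathcal{A}\to\mathbf{B}\mathcal{A}\to\mathcal{A}$, the remaining content is purely homotopical.

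For step (i), I would argue as follows. Over a field $\K$ of characteristic zero, $\mathsf{CDGA}_\K$ admits a model structure in which quasi-isomorphisms are the weak equivalences and surjections are the fibrations. Because the bar complex $\mathbf{B}\mathcal{A}=\bigoplus_r\mathcal{A}\otimes(\overline{\mathcal{A}}[1])^{\otimes r}\otimes\mathcal{A}$ is, forgetting the differential, a free graded commutative $\mathcal{A}\otimes\mathcal{A}$-algebra on the generators $[g_1|\cdots|g_r]$ (filtered by the tensor length, with the differential lowering length), it is semifree and hence cofibrant as an $(\mathcal{A}\otimes\mathcal{A})$-algebra. Combined with the fact that $\mathbf{B}\mathcal{A}\to\mathcal{A}$ is the standard surjective quasi-isomorphism recorded above, this identifies
\begin{equation*}
\dR{X}{\bullet}{,\K}\bigotimes_{\dR{X}{\bullet}{,\K}\otimes\dR{X}{\bullet}{,\K}}\mathbf{B}\dR{X}{\bullet}{,\K}\ \simeq\ \dR{X}{\bullet}{,\K}\bigotimes^{\bm{L}}_{\dR{X}{\bullet}{,\K}\otimes\dR{X}{\bullet}{,\K}}\dR{X}{\bullet}{,\K},
\end{equation*}
and similarly with $\K$ in place of the left factor $\dR{X}{\bullet}{,\K}$.

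For step (ii), I would invoke the definition of homotopy pushout in $\mathsf{CDGA}_\K$: it is computed by replacing one leg of $\mathcal{A}\otimes\mathcal{A}\to\mathcal{A}$ by a cofibration and taking the ordinary pushout. Under the contravariant equivalence sending a simplicial set to its de Rham algebra, the Cartesian diagrams at the end of the previous subsection become homotopy cocartesian in $\mathsf{CDGA}_\K\opposite$, which is exactly the setup of a derived tensor. Hence the universal property of the pushout provides a unique (up to homotopy) comparison map from the derived tensor to $\dR{X\prod_{X\times X}X^{\simplex{}{1}}}{\bullet}{,\K}$ (resp.\ $\dR{\{x\}\prod_{X\times X}X^{\simplex{}{1}}}{\bullet}{,\K}$); I would then observe that the iterated integral map constructed in the previous proposition is a morphism of commutative cochain algebras filling the relevant square, and conclude by uniqueness.

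The main obstacle, and the only nontrivial point, is step (i): one must be careful to justify that $\mathbf{B}\mathcal{A}$ is cofibrant as an $\mathcal{A}\otimes\mathcal{A}$-algebra (rather than merely as a chain complex), since the derived tensor product of commutative cochain algebras is in principle more subtle than the underlying derived tensor product of modules. The key observation that saves us is that the characteristic-zero hypothesis makes the Quillen equivalence between commutative algebras and $E_\infty$-algebras available, so that the semifree filtration on $\mathbf{B}\mathcal{A}$ by tensor length exhibits it as a colimit of cell attachments of free commutative $\mathcal{A}\otimes\mathcal{A}$-algebras; once this is established, the identification of the two tensor products and the verification of the universal property are formal.
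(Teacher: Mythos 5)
The paper offers no proof of this Observation: it is an informal gloss on the preceding Theorem, whose own proof consists of two sentences asserting that the bar-construction tensor products coincide with the derived tensor products ``by assumption'' and that the rest ``immediately follows from the definition of homotopy pushout''. Your proposal follows exactly this route but attempts to actually justify those assertions, which is more than the paper does; the overall strategy (identify the $\mathbf{B}\mathcal{A}$-tensors with derived tensors, then invoke the weak universal property of the homotopy pushout together with the fact that the iterated integral is an algebra map filling the relevant square) is the same as the paper's.

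There is, however, a genuine gap in your step (i). The shuffle algebra $\mathbf{B}\mathcal{A}$ is \emph{not} a free graded-commutative $\mathcal{A}\otimes\mathcal{A}$-algebra on the elements $[g_1|\cdots|g_r]$: these satisfy the shuffle relations, e.g.\ $[g_1]\wedge[g_2]=\pm[g_1|g_2]\pm[g_2|g_1]$, so the words of length at least $2$ are partially decomposable and cannot all be free generators. What is true --- and only in characteristic zero --- is that the shuffle algebra on $\overline{\mathcal{A}}[1]$ is free graded-commutative on a smaller set of indecomposables (by the Leray/Milnor--Moore structure theory, or concretely on a Lyndon-word basis), and one must then check that the differential respects the induced filtration; note also that $d_{1}$ preserves tensor length and only $d_{2}$ lowers it, so ``the differential lowers length'' needs restating. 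Your argument is repairable along these lines, but as written the key cofibrancy claim rests on a false premise. A second, more minor, overstatement: the de Rham functor does not carry the Cartesian squares of simplicial sets to homotopy cocartesian squares of algebras (that would be an Eilenberg--Moore-type statement requiring connectivity hypotheses, and would force the comparison map to be an equivalence, which neither the Theorem nor the Observation claims). All you actually need, and all that holds in general, is that the resulting square of algebras commutes, so that the weak universal property of the homotopy pushout produces the comparison map which the iterated integral then represents.
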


\section{comparison theorem}
Roughly speaking, iterated integral is a technique developed by K.T.Chen to construct differential forms on a loop space of ``smooth space''. Chen defined \bf Chen space\rm{} to construct a loop space of smooth manifold which has a ``smooth structure''. On the other hand, Souriau defined \bf diffeological space\rm{} as a kind of generalization of smooth manifolds, and I.Zemmour developed it. It is a well-known fact that there is a ``canonical'' diffeological space of smooth loops of manifold. Roughly speaking, it is the smooth or diffeological loop space of a manifold. Under this wording,
\begin{center}\it
	Chen's iterated integral is a technique to construct differential forms on a smooth loop space. 
\end{center}
On the other hand, we already know the simplicial analogy of Chen's iterated integral. In short,
\begin{center}\it
	the simplicial iterated integral is a technique to construct differential forms on a simplicial loop space.
\end{center}
There is some way to construct a simplicial set from a smooth manifold. Hence we obtain a question: are these two kinds of iterated integrals equivalent?

To describe this comparison, we consider two different comparisons. One is a comparison of loop spaces, which constitutes a zig-zag of simplicial maps
\begin{equation*}
	\Omega_{x}S(\mathcal{M})
		=
	\Omega_{x}S^{\mathcal{D}}\mathsf{Cdiff}\mathsf{Dtplg}(\mathcal{M})
		\xleftarrow{\text{(\ref{loop space comparison 1})}}
	\Omega_{x}S^{\mathcal{D}}(\mathcal{M})
		\xleftarrow{\text{(\ref{loop space comparison 2})}}
	S^{\mathcal{D}}(\mathsf{Loop}(\mathcal{M},x))
		\xleftarrow{\text{(\ref{Kihara2})}}
	S^{\mathcal{D}}_{\mathsf{aff}}(\mathsf{Loop}(\mathcal{M},x))
\end{equation*}
between several types of loop spaces. The other one is a comparison of de Rham algebras. To compare several types of de Rham algebras, we construct a zig-zag of natural transformations
\begin{center}
\begin{tikzpicture}
	\node (02) at (0*3, 2*1.2) {$\mathsf{Diff}$};
	\node (12) at (1*3, 2*1.2) {$\mathsf{Diff}$};
	\node (22) at (2*3, 2*1.2) {$\mathsf{Diff}$};
	\node (32) at (3*3, 2*1.2) {$\mathsf{Diff}$};
	\draw[transform canvas={yshift= 1pt}] (02) -- (12) -- (22) -- (32);
	\draw[transform canvas={yshift=-1pt}] (02) -- (12) -- (22) -- (32);
	\node (01) at (0*3, 1*1.2) {$\sSet$};
	\node (11) at (1*3, 1*1.2) {$\sSet$};
	\node (21) at (2*3, 1*1.2) {$\sSet$};
	\draw[transform canvas={yshift= 1pt}] (01) -- (11) -- (21);
	\draw[transform canvas={yshift=-1pt}] (01) -- (11) -- (21);
	\node (00) at (0*3, 0*1.2) {$\mathsf{CAlg}\opposite$};
	\node (10) at (1*3, 0*1.2) {$\mathsf{CAlg}\opposite$};
	\node (20) at (2*3, 0*1.2) {$\mathsf{CAlg}\opposite$};
	\node (30) at (3*3, 0*1.2) {$\mathsf{CAlg}\opposite$};
	\draw[transform canvas={yshift= 1pt}] (00) -- (10) -- (20) -- (30);
	\draw[transform canvas={yshift=-1pt}] (00) -- (10) -- (20) -- (30);
	\draw[->] (02) --node[left] {$\scriptstyle S^{\mathcal{D}}_{\mathsf{aff}}$} (01);
	\path (02) --node[right] (0201R) {} (01);
	\draw[->] (01) --node[left] {$\scriptstyle \dR{-}{\bullet}{,\Z\langle\dq\rangle}$} (00);
	\path (01) --node[right] (0100R) {} (00);
	\draw[->] (12) --node[left] (1211L) {$\scriptstyle S^{\mathcal{D}}_{\mathsf{aff}}$} (11);
	\path (12) --node[right] (1211R) {} (11);
	\draw[->] (11) --node[left] (1110L) {$\scriptstyle \mathcal{A}_{\mathrm{PP}}$} (10);
	\path (11) --node[right] (1110R) {} (10);
	\draw[->] (22) --node[left] (2221L) {$\scriptstyle S^{\mathcal{D}}_{\mathsf{aff}}$} (21);
	\draw[->] (21) --node[left] (2120L) {$\scriptstyle \mathcal{A}_{\mathrm{PS}}$} (20);
	\draw[->] (32) --node[left] (3230L) {$\scriptstyle \mathcal{A}_{\mathsf{dR}}$} (30);
	\draw[double, double distance=2pt, shorten <= 15pt, shorten >=15pt] (0201R) -- (1211L);
	\draw[double, double distance=2pt, ->, -implies, shorten <= 15pt, shorten >=15pt] (0100R) -- (1110L);
	\draw[double, double distance=2pt, shorten <= 15pt, shorten >=15pt] (1211R) -- (2221L);
	\draw[double, double distance=2pt, ->, -implies, shorten <= 15pt, shorten >=15pt] (1110R) -- (2120L);
	\draw[double, double distance=2pt, ->, -implies, shorten <= 15pt, shorten >=15pt] (3230L) --node[above] {$\scriptstyle \alpha$} (21);
\end{tikzpicture}
\end{center}
since assigning de Rham algebra to each diffeological space gives a functor.

\subsection{comparison of loop spaces}\label{review diff}
Let $\mathsf{Dom}$ be a category whose objects are the open subset of the Euclidean space $\R^{n}$ and whose morphisms are smooth maps between these. For a given set $D$, we have two contravariant functors $\mathcal{W}_{D},\mathcal{K}_{D}\colon\mathsf{Dom}\opposite\to\Set$ defined by
\begin{enumerate}
	\item $\mathcal{W}_{D}(D)\coloneqq\mathsf{Map}(U,D)$ and $\mathcal{W}_{D}(\phi)(p)=p\circ\phi$,
	\item $\mathcal{W}_{D}(D)\coloneqq\{q\in\mathcal{W}_{D}(U)|\text{$q$ is locally constant}\}$ and $\mathcal{K}_{D}(\phi)(q)=q\circ\phi$,
\end{enumerate}
for any open subsets $U\subset\R^{n}, V\subset\R^{m}$, any elements $p\in\mathcal{W}_{D}(U)$, $q\in\mathcal{K}_{D}(U)$ and smooth map $\phi\colon V\to U$, where $q\colon U\to D$ is said to be locally constant, if there exists an open covering $\bigcup_{\alpha}V_{\alpha}=U$ of $U$ such that $q|_{V_{\alpha}}$ is constant for all index $\alpha$.
\begin{dfn}[diffeology]
	We call a pair $(D,\mathcal{D})$ a \defn{diffeological space}, if it satisfies the following conditions.
	\begin{enumerate}
		\item $D$ is a set and $\mathcal{D}_{D}\colon\mathsf{Dom}\opposite\to\mathsf{Set}$ is a contravariant functor.
		\item $\mathcal{K}_{D}\subset\mathcal{D}_{D}\subset\mathcal{W}_{D}$ as contravariant functor.
		\item For any open subset $U$ and its open covering $U=\bigcup_{\alpha}U_{\alpha}$, $p\in\mathcal{D}_{D}(U)$ if and only if $p|_{U_{\alpha}}\in\mathcal{D}_{D}(U_{\alpha})$ for all $\alpha$.
	\end{enumerate}
\end{dfn}
\begin{ex}[diffeology of a manofild]
	Let $\mathcal{M}$ be a smooth manifold. We define the canonical diffeology $\mathcal{D}_{\mathcal{M}}$ of $\mathcal{M}$ as
	\begin{equation*}
		\mathcal{D}_{\mathcal{M}}(U)
			\coloneqq
		\{p\colon U\to\mathcal{M}|\text{$p$ is smooth.}\}
	\end{equation*}
	for each open set $U$.
\end{ex}
\begin{ex}[functional diffeology]
	Let $D_{1},D_{2}$ be diffeologiacl spaces. We denote the set $\{\varphi\colon D_{1}\to D_{2}\}$ of smooth maps by $C^{\infty}(D_{1},D_{2})$. We define a diffeology of $C^{\infty}(D_{1},D_{2})$ as
	\begin{equation*}
		\mathcal{D}_{C^{\infty}(D_{1},D_{2})}(U)
			\coloneqq
		\{p\colon U\to C^{\infty}(D_{1},D_{2})|\text{$p^{\vee}$ is smooth.}\}
	\end{equation*}
	for each open set $U$, where $p^{\vee}\colon U\times D_{1}\to D_{2}$ is a map defined by $p^{\vee}(u,x)\coloneqq p(u)(x)$.
	We call the diffeology \defn{functional diffeology}.
\end{ex}
\begin{ex}[continuous diffeology and $\mathcal{D}$-topology]
	Conversely, for any diffeological space $(D,\mathcal{D})$, there is a topological space $\mathsf{Dtplg}(D)$ whose family of open sets is defined by
	\begin{equation*}
		\{O\subset D|p^{-1}(O)\text{ is open for any plot $p\in\mathcal{D}$}\}.
	\end{equation*}
	It gives a left adjoit $\mathsf{Dtplg}$ of $\mathsf{Cdiff}\colon\mathsf{Top}\to\mathsf{Diff}$.
\end{ex}

Let $D$ be a diffeological space. We call a \defn{smooth path} in $D$ any smooth map from $\R$ to $D$, and denote the diffeological space $C^{\infty}(\R,D)$ of all smooth paths in $D$ by $\mathsf{Path}(D)$. We say that a smooth path $\gamma\colon\R\to D$ is \defn{stationary} if there exists a positive real number $\ep>0$ such that $f$ is constant on $(-\infty,\ep)$ and also on $(1-\ep,\infty)$.
We denote the subdiffeological space of stationary paths in $D$ by $\mathsf{stPath}(D)$. Since, for each point $t\in\R$, the evaluation map $\mathsf{ev}_{t}\colon C^{\infty}(\R,D)\to D$ defined by $\gamma\mapsto\gamma(t)$ is smooth, we obtain a ``projection'' $(\mathsf{ev}_{0},\mathsf{ev}_{1})\colon\mathsf{Path}(D)\to D\times D$. For each point $x\in D$, we denote the pullback of $\mathsf{Path}(D)\to D\times D$ along $\{(x,x)\}\to D\times D$ as $\mathsf{Loop}(D,x)$. Similarly, we denote the pullback of $\mathsf{stPath}(D)\to D\times D$ along $\{(x,x)\}\to D\times D$ as $\mathsf{stLoop}(D,x)$.

We define an equivalence relation on $D$ by $x\sim y$ if and only if there is a stationary smooth path $\gamma$ which satisfies $\gamma(0)=x$ and $\gamma(1)=y$. We call the quotient set $D/\sim$ the $0$th \defn{smooth homotopy group} and denote it by $\pi^{\mathcal{D}}_{0}(D)$. The first \defn{smooth homotopy group} $\pi^{\mathcal{D}}_{1}(D,x)$ is defined as $\pi^{\mathcal{D}}_{0}(\mathsf{Loop}(D,x),\mathsf{const}_{x})$. In addition, the $n$th \defn{smooth homotopy group} $\pi^{\mathcal{D}}_{n}(D,x)$ is inductively defined as $\pi^{\mathcal{D}}_{n-1}(\mathsf{Loop}(D,x),\mathsf{const}_{x})$ for each $n\geq2$.

A \defn{pointed diffeological space} is a pair $(D,x)$ of a diffeological space $X$ and its point $x\in D$. This seems to be a special case of the pair of a diffeological space $D$ and its diffeological subspace $\{x\}$, if we consider the point $x$ as a subspace $\{x\}$. For any pairs $(X,U), (Y,V)$ of diffeological space and its subspace, we denote the set of smooth functions $f\colon X\to Y$ satisfying $f(U)\subset V$ with the sub-diffeology from $C^{\infty}(X,Y)$ as $C^{\infty}((X,U),(Y,V))$, and we denote $\pi^{\mathcal{D}}_{0}(C^{\infty}((X,U),(Y,V)))$ as $[(X,U),(Y,V)]$.

\begin{thm}[Christensen-Wu \cite{christensen2015homotopy} Theorem 3.2]\label{Christensen-Wu}
	Let $(D,x)$ be a pointed diffeological space, $n\geq1$ be a positive integer and $\ep\in(0,\frac{1}{2})$ be a positive real number. In addition, we define diffeological spaces $\partial\R^{n}$, $\partial_{\ep}\R^{n}$, $\mathbb{A}^{n}$, $\partial_{\ep}\mathbb{A}^{n}$, $\diffsimplex{n}_{\mathsf{sub}}$ and $\partial_{\ep}\diffsimplex{n}_{\mathsf{sub}}$ respectively as follows:
	\begin{enumerate}
		\item $\partial\R^{n}=\{(s_{1},\dots,s_{n})\in\R^{n}|\text{$s_{i}=0$ or $1$ for some $i$}\}$ with the subdiffeology;
		\item $\partial_{\ep}\R^{n}=\{(s_{1},\dots,s_{n})\in\R^{n}|\text{$s_{i}<\ep$ or $s_{i}>1-\ep$ for some $i$}\}$ with the subdiffeology;
		\item $\mathbb{A}^{n}=\{(s_{0},\dots,s_{n})\in\R^{n+1}|\text{$\sum_{i}s_{i}=1$ for some $i$}\}$ with the subdiffeology;
		\item $\partial_{\ep}\mathbb{A}^{n}=\{(s_{0},\dots,s_{n})\in\mathbb{A}^{n}|\text{$s_{i}<\ep$ for some $i$}\}$ with the subdiffeology;
		\item $\diffsimplex{n}_{\mathsf{sub}}=\{(s_{0},\dots,s_{n})\in\mathbb{A}^{n}|\text{$0\leq s_{i}\leq1$ for any $i$}\}$ with the subdiffeology;
		\item $\partial_{\ep}\diffsimplex{n}_{\mathsf{sub}}=\{(s_{0},\dots,s_{n})\in\partial_{\ep}\mathbb{A}^{n}|\text{$0\leq s_{i}\leq1$ for any $i$}\}$ with the subdiffeology.
	\end{enumerate}
	Then, there exists a zig-zag of smooth maps
	\begin{equation*}
		\R^{n}
			\xrightarrow{\text{identity}}
		\R^{n}
			\xrightarrow{\Phi}
		\mathbb{A}^{n}
			\xleftarrow{\text{restriction}}
		\diffsimplex{n}_{\mathsf{sub}}
	\end{equation*}
	induces a sequence of isomorphisms
	\begin{equation*}
		\pi^{\mathcal{D}}_{n}(D,x)
			\cong
		[(\R^{n},\partial\R^{n}),(D,x)]
			\xleftarrow{\simeq}
		[(\R^{n},\partial_{\ep}\R^{n}),(D,x)]
			\xleftarrow{\simeq}
		[(\mathbb{A}^{n},\partial_{\ep}\mathbb{A}^{n}),(D,x)]
			\xrightarrow{\simeq}
		[(\diffsimplex{n}_{\mathsf{sub}},\partial_{\ep}\diffsimplex{n}_{\mathsf{sub}}),(D,x)],
	\end{equation*}
	where the smooth map $\Phi\colon\R^{n}\to\mathbb{A}^{n}$ is given by
	\begin{equation*}
		\Phi(s_{1},\dots,s_{n})
			\coloneqq
		(1-s_{1},s_{1}-s_{2},\dots,s_{n}-s_{n-1},s_{n}-0).
	\end{equation*}
\end{thm}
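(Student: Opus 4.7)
The plan is to establish the four isomorphisms in turn, with the conceptual heart being the first (identifying $\pi^{\mathcal{D}}_n(D,x)$ with $[(\R^n,\partial\R^n),(D,x)]$); the remaining three reduce to smoothing and extension constructions built from standard bump functions.

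For the first isomorphism I would induct on $n$. The base case $n=1$ asks to compare equivalence classes of smooth loops under stationary homotopy (the inductive definition of $\pi^{\mathcal{D}}_1$, once one unfolds the loop space construction) with the set $[(\R,\partial\R),(D,x)]$ of arbitrary smooth maps $\R\to D$ sending $\{0,1\}$ to $x$, modulo smooth homotopy relative to the basepoint. A fixed smooth reparametrisation $\rho\colon\R\to\R$ that is constant near $0$ and near $1$ and monotone in between produces a stationary representative $\gamma\circ\rho$ of any smooth loop, and a straight-line homotopy through reparametrisations shows $\gamma$ and $\gamma\circ\rho$ lie in the same class. The inductive step uses the exponential law for functional diffeology: a smooth map $(\R^n,\partial\R^n)\to(D,x)$ corresponds by currying to a smooth map $(\R,\partial\R)\to(C^{\infty}((\R^{n-1},\partial\R^{n-1}),(D,x)),\mathsf{const}_{x})$, and the same adjunction identifies homotopies with homotopies; the induction hypothesis together with the $n=1$ step then identifies $[(\R^n,\partial\R^n),(D,x)]$ with the iterated $\pi^{\mathcal{D}}_0$ of loop spaces that defines $\pi^{\mathcal{D}}_n(D,x)$.

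For $[(\R^n,\partial_{\ep}\R^n),(D,x)]\to[(\R^n,\partial\R^n),(D,x)]$ the forgetful map is clearly well defined, and I would construct an inverse by precomposing with a smooth self-map of $\R^n$ built as a product of one-dimensional cut-offs $\psi\colon\R\to\R$ which equal $0$ on $[0,\ep/2]$, the identity on $[\ep,1-\ep]$, and $1$ on $[1-\ep/2,1]$; this pushes a neighborhood of $\partial\R^n$ into $\partial\R^n$, and the straight-line homotopy $t\psi+(1-t)\mathrm{id}$ shows the composites in both directions are the identity on homotopy classes. The $\Phi$-induced isomorphism is essentially formal since $\Phi$ is an affine diffeomorphism $\R^n\to\mathbb{A}^n$; one only has to verify, possibly after adjusting $\ep$, that $\Phi$ carries the two $\partial_{\ep}$-conditions into one another up to homotopy. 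For the restriction map, surjectivity is the delicate point: given a smooth map on $\diffsimplex{n}_{\mathsf{sub}}$ sending $\partial_{\ep}\diffsimplex{n}_{\mathsf{sub}}$ to $x$, one extends by the constant $x$ to all of $\mathbb{A}^n$, and smoothness of the extension across $\partial\diffsimplex{n}_{\mathsf{sub}}$ follows precisely because the map is already locally constant in a collar of the boundary thanks to the $\partial_{\ep}$-hypothesis.

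The hard part, and the point at which the diffeological (rather than merely topological) structure really comes in, is verifying that these smoothing reparametrisations and extensions are compatible with one another and respect the functional diffeology on $C^{\infty}((\R^k,\partial\R^k),(D,x))$, so that the exponential law in the inductive step actually produces a bijection on smooth homotopy classes rather than only on underlying sets of maps. Careful use of smooth bump functions together with the locality axiom of diffeology handles these checks, and the argument should essentially reproduce the proof of Christensen--Wu.
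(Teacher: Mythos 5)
The paper's own ``proof'' of this theorem is a one-line deferral to Christensen--Wu \cite{christensen2015homotopy} (Theorem 3.2), remarking only that $\Phi$ is presented in a different form, and your sketch --- induction on $n$ via the exponential law for the functional diffeology, cut-off reparametrisations relating the $\partial$- and $\partial_{\ep}$-versions, and extension by the constant map justified by the locality axiom --- is a faithful reconstruction of exactly that argument, so it is essentially the same approach. One small caution: the $\Phi$-step is not ``essentially formal,'' because although $\Phi$ is an affine diffeomorphism of total spaces and does satisfy $\Phi(\partial_{\ep}\R^{n})\subseteq\partial_{\ep}\mathbb{A}^{n}$, its inverse does not carry $\partial_{\ep}\mathbb{A}^{n}$ into $\partial_{\ep}\R^{n}$ when $n\geq2$ (two nearly equal interior coordinates give $s_{j}-s_{j+1}<\ep$ without $s$ lying in $\partial_{\ep}\R^{n}$), so the bijection on relative homotopy classes genuinely requires the deformation you allude to rather than following from the diffeomorphism alone.
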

\begin{rem}
	This is a somewhat rewritten version of the Christensen-Wu theorem in a form for later use.
\end{rem}
\begin{proof}
	Although the map $\Phi$ is given in a different way, it can be proved by mimicking the proof of theorem 3.2 of Christensen-Wu \cite{christensen2015homotopy}.
\end{proof}

Kihara \cite{Kihara2018} introduces other diffeology of standard simplex $\topsimplex{n}=\{(s_{0},\dots,s_{n})\in[0,1]^{n+1}|\sum_{i}s_{i}=1\}$ for each $n\geq0$ in order to define a model structure on the category $\mathsf{Diff}$ of diffeological spaces which is Quillen equivalent to the model category $\sSet$ of simplicial sets with Kan-Quillen model structure. The diffeological standard simplices $\topsimplex{n}$ with Kihara's diffeology satisfy following properties:
\begin{enumerate}
	\item the underlying topological space $\mathsf{Dtplg}(\topsimplex{n})$ of $\topsimplex{n}$ is the topological standard $n$-simplex for any $n\geq0$;
	\item any affine map $\diffsimplex{m}\to\diffsimplex{n}$ is smooth:
	\item the $r$th horn $\diffhorn{r}{n}$ is a deformation retract of $\diffsimplex{n}$ for $n\geq 1$ and $0\leq r\leq n$;
	\item the canonical map $\diffsimplex{n}\to\diffsimplex{n}_{\mathsf{sub}}$ whose underlying map is identity is smooth;
	\item thhe above canonical map $\diffsimplex{1}\to\diffsimplex{1}_{\mathsf{sub}}$ is isomorphic if $n=1$.
\end{enumerate}
These diffeological simplices gives a Kan complex $S^{\mathcal{D}}(D)\coloneqq\Hom{\mathsf{Diff}}(\diffsimplex{\bullet},D)$ for any diffeological space $D$. In addition, it gives a Quillen equivalence $|-|_{\mathcal{D}}\dashv S^{\mathcal{D}}\colon\sSet\to\mathsf{Diff}$.
\begin{thm}[Kihara \cite{Kihara2018} Lemma 9.12 and Theorem 1.4]\label{Kihara1}
	Let $(D,x)$ be a pointed diffeological space. Then, there exists a natural bijection
	\begin{equation*}
		\pi^{\mathcal{D}}_{n}(D,x)
			\cong
		[(\diffsimplex{\bullet}_{\mathsf{sub}},\partial_{\ep}\diffsimplex{\bullet}_{\mathsf{sub}}),(D,x)]
			\xrightarrow{\simeq}
		[(\diffsimplex{\bullet},\partial_{\ep}\diffsimplex{\bullet}),(D,x)]
			\cong
		\pi_{n}(S^{\mathcal{D}}(D),x)
	\end{equation*}
	that is an isomorphism of groups for $n>0$.
\end{thm}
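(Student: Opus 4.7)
The plan is to construct the zig-zag in three segments and verify that each step is a bijection. The leftmost isomorphism $\pi^{\mathcal{D}}_n(D,x)\cong[(\diffsimplex{n}_{\mathsf{sub}},\partial_{\ep}\diffsimplex{n}_{\mathsf{sub}}),(D,x)]$ is already furnished by Theorem~\ref{Christensen-Wu}, so no further argument is needed there.

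The middle map is precomposition with the canonical smooth map $c_n\colon\diffsimplex{n}\to\diffsimplex{n}_{\mathsf{sub}}$ coming from Kihara's property (4); since $c_n$ carries $\partial_{\ep}\diffsimplex{n}$ into $\partial_{\ep}\diffsimplex{n}_{\mathsf{sub}}$, the induced map on relative pointed homotopy classes is well-defined. For $n=1$ this is already an equality of diffeological spaces by property (5), so the induced map is trivially bijective. For $n\geq 2$ I would argue in two parts: for surjectivity, starting from a smooth representative $f\colon(\diffsimplex{n},\partial_{\ep}\diffsimplex{n})\to(D,x)$, use the horn deformation retractions of property (3) together with a neighborhood argument around $\partial_{\ep}\diffsimplex{n}$ to produce a smooth relative homotopy to a representative that extends to a smooth map out of $\diffsimplex{n}_{\mathsf{sub}}$; for injectivity, apply the analogous construction to the cylinder, promoting a smooth relative homotopy on $\diffsimplex{n}\times\diffsimplex{1}$ between two representatives that already factor through $\diffsimplex{n}_{\mathsf{sub}}$ to one whose intermediate stages also factor.

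For the rightmost identification I would unpack the Quillen adjunction $|{-}|_{\mathcal{D}}\dashv S^{\mathcal{D}}$: a pointed simplicial map $(\simplex{}{n}/\boundary{n},\ast)\to(S^{\mathcal{D}}(D),x)$ transposes to a smooth map $\diffsimplex{n}\to D$ collapsing $\partial_{\ep}\diffsimplex{n}$ to $x$, and conversely, yielding a bijection on homotopy classes. Because $S^{\mathcal{D}}(D)$ is a Kan complex, the corresponding set of simplicial classes is $\pi_n(S^{\mathcal{D}}(D),x)$ by definition. The group structures match for $n\geq 1$ because the simplicial pinch and fold maps used for the multiplication are affine, hence smooth with respect to Kihara's diffeology by property (2).

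The hard part is the middle bijection: this is essentially the content of Kihara's Lemma~9.12, and its proof genuinely depends on the intrinsic structure of Kihara's diffeology on $\diffsimplex{n}$ rather than any formal consequence of the listed properties (1)--(5) alone. In practice I would quote the lemma from \cite{Kihara2018} and obtain the right-hand bijection by combining Theorem~1.4 of loc.\ cit.\ (the Quillen equivalence) with the definition of $\pi_n$ of a Kan complex.
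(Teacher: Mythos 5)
The paper gives no proof of this theorem at all --- it is stated as a result imported verbatim from Kihara (Lemma 9.12 and Theorem 1.4 of \cite{Kihara2018}) --- and your proposal, which ultimately defers the two nontrivial bijections to exactly those citations, is therefore consistent with the paper's treatment and correct in outline. One caution about the part you do sketch: in the right-hand identification, the adjoint of a pointed simplicial map $\simplex{}{n}/\boundary{n}\to S^{\mathcal{D}}(D)$ is a smooth map collapsing only $\partial\diffsimplex{n}$ to $x$, not a whole $\ep$-neighbourhood $\partial_{\ep}\diffsimplex{n}$, so that step is also not purely formal and is likewise part of what the cited lemma supplies.
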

The family of smooth maps $\diffsimplex{\bullet}\to\diffsimplex{\bullet}_{\mathsf{sub}}\to\mathbb{A}^{n}$ induces a sequence of natural transformations
\begin{center}
\begin{tikzpicture}
	\node (01) at (0*3, 1*1) {$S^{\mathcal{D}}$};
	\node (11) at (1*3, 1*1) {$S^{\mathcal{D}}_{\mathsf{sub}}$};
	\node (21) at (2*3, 1*1) {$S^{\mathcal{D}}_{\mathsf{aff}}$};
	\node (00) at (0*3, 0*1) {$\Hom{\mathsf{Diff}}(\diffsimplex{\bullet},-)$};
	\node (10) at (1*3, 0*1) {$\Hom{\mathsf{Diff}}(\diffsimplex{\bullet}_{\mathsf{sub}},-)$};
	\node (20) at (2*3, 0*1) {$\Hom{\mathsf{Diff}}(\mathbb{A}^{\bullet},-)$};
	\draw[->, double, double distance=2pt, -implies] (11) -- (01);
	\draw[->, double, double distance=2pt, -implies] (21) -- (11);
	\draw[->, double, double distance=2pt, -implies] (10) -- (00);
	\draw[->, double, double distance=2pt, -implies] (20) -- (10);
	\draw[transform canvas={xshift= 1pt}] (01) -- (00);
	\draw[transform canvas={xshift=-1pt}] (01) -- (00);
	\draw[transform canvas={xshift= 1pt}] (11) -- (10);
	\draw[transform canvas={xshift=-1pt}] (11) -- (10);
	\draw[transform canvas={xshift= 1pt}] (21) -- (20);
	\draw[transform canvas={xshift=-1pt}] (21) -- (20);
\end{tikzpicture}.
\end{center}
It gives a sequance of simplicial sets $S^{\mathcal{D}}(D)\leftarrow S^{\mathcal{D}}_{\mathsf{sub}}(D)\leftarrow S^{\mathcal{D}}_{\mathsf{aff}}(D)$ for any diffeological space $D$.
\begin{thm}[Kihara \cite{Kihara2022smooth} Theorem 1.1]\label{Kihara2}
	The above canonical morphisms
	\begin{equation*}
		S^{\mathcal{D}}(D)
			\leftarrow
		S^{\mathcal{D}}_{\mathsf{sub}}(D)
			\leftarrow
		S^{\mathcal{D}}_{\mathsf{aff}}(D)
	\end{equation*}
	are weak homotopy equivalences for any diffeological space $D$. In particular, $S^{\mathcal{D}}(D)$ is a fibrant approximation both of $S^{\mathcal{D}}_{\mathsf{aff}}(D)$ and $S^{\mathcal{D}}_{\mathsf{sub}}(D)$.
\end{thm}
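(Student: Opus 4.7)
The plan is to show each map in the zig-zag $S^{\mathcal{D}}(D) \leftarrow S^{\mathcal{D}}_{\mathsf{sub}}(D) \leftarrow S^{\mathcal{D}}_{\mathsf{aff}}(D)$ induces a bijection on all homotopy groups, using Theorems \ref{Christensen-Wu} and \ref{Kihara1} as the main inputs. Since $S^{\mathcal{D}}(D)$ is a Kan complex by Kihara's Quillen equivalence $|-|_{\mathcal{D}} \dashv S^{\mathcal{D}}$, its simplicial homotopy groups $\pi_{n}(S^{\mathcal{D}}(D), x)$ are well-defined; Theorem \ref{Kihara1} identifies them with the smooth homotopy groups $\pi^{\mathcal{D}}_{n}(D, x)$, and Theorem \ref{Christensen-Wu} further identifies these with $[(\diffsimplex{n}_{\mathsf{sub}}, \partial_{\ep}\diffsimplex{n}_{\mathsf{sub}}), (D, x)]$ and $[(\mathbb{A}^{n}, \partial_{\ep}\mathbb{A}^{n}), (D, x)]$. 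To compute $\pi_{n}$ for $S^{\mathcal{D}}_{\mathsf{sub}}(D)$ and $S^{\mathcal{D}}_{\mathsf{aff}}(D)$, which are not a priori Kan, I would take Kan's fibrant replacement $\Ex{\infty}$ and aim to establish
\begin{align*}
  \pi_{n}(\Ex{\infty} S^{\mathcal{D}}_{\mathsf{sub}}(D), x) &\cong [(\diffsimplex{n}_{\mathsf{sub}}, \partial_{\ep}\diffsimplex{n}_{\mathsf{sub}}), (D, x)], \\
  \pi_{n}(\Ex{\infty} S^{\mathcal{D}}_{\mathsf{aff}}(D), x) &\cong [(\mathbb{A}^{n}, \partial_{\ep}\mathbb{A}^{n}), (D, x)],
\end{align*}
naturally with respect to the canonical pullback maps induced by $\diffsimplex{n} \to \diffsimplex{n}_{\mathsf{sub}} \hookrightarrow \mathbb{A}^{n}$.

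The core step is the construction of these natural isomorphisms. Starting from an element of $\pi_{n}(\Ex{\infty} S^{\mathcal{D}}_{\mathsf{sub}}(D), x)$ represented by a smooth map $\sd^{k}\diffsimplex{n}_{\mathsf{sub}} \to D$ sending the subdivided boundary to $x$, I would smooth it out near $\partial\diffsimplex{n}_{\mathsf{sub}}$ via a smooth partition of unity on $\diffsimplex{n}_{\mathsf{sub}}$ to produce a representative of a class in $[(\diffsimplex{n}_{\mathsf{sub}}, \partial_{\ep}\diffsimplex{n}_{\mathsf{sub}}), (D, x)]$. Conversely, a smooth map $\diffsimplex{n}_{\mathsf{sub}} \to D$ constant on a thickened boundary $\partial_{\ep}\diffsimplex{n}_{\mathsf{sub}}$ already defines a simplex in $S^{\mathcal{D}}_{\mathsf{sub}}(D)$, whose class gives the inverse. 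Combining with Theorems \ref{Christensen-Wu} and \ref{Kihara1}, the induced map $\pi_{n}(S^{\mathcal{D}}_{\mathsf{sub}}(D), x) \to \pi_{n}(S^{\mathcal{D}}(D), x)$ becomes the identity under these identifications, hence bijective. An analogous argument, together with the final isomorphism of Theorem \ref{Christensen-Wu}, handles $S^{\mathcal{D}}_{\mathsf{aff}}(D)$ by replacing $\diffsimplex{n}_{\mathsf{sub}}$ with $\mathbb{A}^{n}$.

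The main obstacle lies in this smooth-approximation argument for the non-Kan simplicial sets. Kihara's proof of Theorem \ref{Kihara1} exploits the horn-deformation-retract property of $\diffsimplex{n}$ (property (3) above), which is not available for $\diffsimplex{n}_{\mathsf{sub}}$ or $\mathbb{A}^{n}$ with their respective diffeologies; a direct smooth simplicial approximation theorem compatible with partitions of unity on subdivisions $\sd^{k}\diffsimplex{n}_{\mathsf{sub}}$ must be developed, which is delicate because the subspace diffeology is coarser than Kihara's and does not a priori support the fine smoothing operations one needs. An alternative strategy, perhaps cleaner, would be to verify first that the set-theoretic identity $\diffsimplex{n} \to \diffsimplex{n}_{\mathsf{sub}}$ and the inclusion $\diffsimplex{n}_{\mathsf{sub}} \hookrightarrow \mathbb{A}^{n}$ are weak equivalences in Kihara's model structure on $\mathsf{Diff}$ by constructing explicit smooth deformation retractions $\mathbb{A}^{n} \to \diffsimplex{n}$, and then conclude via the Quillen equivalence; but this still requires nontrivial diffeology-theoretic input to verify the smoothness of the retractions against each of the three diffeologies.
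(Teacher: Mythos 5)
This theorem is not proved in the paper at all: it is imported verbatim as Theorem~1.1 of Kihara \cite{Kihara2022smooth}, so there is no internal proof to compare against. Judged on its own terms, your proposal has the right skeleton (reduce to homotopy groups at all basepoints, use Theorems \ref{Christensen-Wu} and \ref{Kihara1} to identify $\pi_{n}(S^{\mathcal{D}}(D),x)$ with the various smooth homotopy sets), but the load-bearing step is exactly the one you defer, and it does not follow from the tools available in this paper. Theorem \ref{Kihara1} gives $\pi_{n}(S^{\mathcal{D}}(D),x)\cong[(\diffsimplex{n},\partial_{\ep}\diffsimplex{n}),(D,x)]$ only for Kihara's diffeology, and its proof leans on the horn-deformation-retract property (3), which fails for $\diffsimplex{n}_{\mathsf{sub}}$ and $\mathbb{A}^{n}$. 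Your claimed isomorphisms $\pi_{n}(\Ex{\infty}S^{\mathcal{D}}_{\mathsf{sub}}(D),x)\cong[(\diffsimplex{n}_{\mathsf{sub}},\partial_{\ep}\diffsimplex{n}_{\mathsf{sub}}),(D,x)]$ and the affine analogue are therefore new theorems of essentially the same depth as the statement being proved, and the sketch offered for them does not close the gap: an $n$-simplex of $\Ex{k}S^{\mathcal{D}}_{\mathsf{sub}}(D)$ is a compatible family of smooth maps on the simplices of $\sd^{k}\simplex{}{n}$, each carrying the subspace diffeology, and ``smoothing near the boundary with a partition of unity'' neither glues these into a single plot-smooth map on $\diffsimplex{n}_{\mathsf{sub}}$ nor shows that the resulting homotopy relation agrees with the one defining $[(\diffsimplex{n}_{\mathsf{sub}},\partial_{\ep}\diffsimplex{n}_{\mathsf{sub}}),(D,x)]$. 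Surjectivity of $\pi_{n}(S^{\mathcal{D}}_{\mathsf{aff}}(D))\to\pi_{n}(S^{\mathcal{D}}(D))$ likewise requires replacing an arbitrary smooth map out of Kihara's $\diffsimplex{n}$ by one extending smoothly to $\mathbb{A}^{n}$, which is a genuine smooth approximation/extension theorem and is the actual content of Kihara's paper.

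Your alternative route is also not free: deducing the claim from levelwise weak equivalences $\diffsimplex{\bullet}\to\diffsimplex{\bullet}_{\mathsf{sub}}\to\mathbb{A}^{\bullet}$ of cosimplicial diffeological spaces would require these to be (Reedy) cofibrant cosimplicial resolutions of the point in Kihara's model structure, and cofibrancy of $\diffsimplex{\bullet}_{\mathsf{sub}}$ and $\mathbb{A}^{\bullet}$ there is precisely what is in doubt, since the generating cofibrations are built from Kihara's diffeology and the subspace diffeology is coarser. In short: the proposal correctly isolates where the difficulty lives, but does not supply the missing smooth-approximation input, so as written it is a plan rather than a proof; the honest course here (and the one the paper takes) is to cite Kihara's Theorem~1.1 as a black box.
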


\subsubsection{smooth and continuous}
\begin{prop}\label{loop space comparison 1}
	Let $\mathcal{M}$ be a smooth manifold. Then unit map $\eta_{\mathcal{M}}\colon\mathcal{M}\to\mathsf{Cdiff}\mathsf{Dtplg}(\mathcal{M})$ is a weak equivalence.
\end{prop}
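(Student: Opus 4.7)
The plan is as follows. By the Quillen equivalence $|-|_\mathcal{D} \dashv S^\mathcal{D}$, a map in $\mathsf{Diff}$ is a weak equivalence iff its image under $S^\mathcal{D}$ is a weak homotopy equivalence of simplicial sets, and by the naturality of Theorem \ref{Kihara2} this is in turn equivalent to $S^{\mathcal{D}}_{\mathsf{sub}}(\eta_\mathcal{M})$ being a weak homotopy equivalence. First I would check that $\mathsf{Dtplg}$ applied to the canonical diffeology on $\mathcal{M}$ recovers the manifold topology (charts are plots so it is at least as fine, and every plot $U \to \mathcal{M}$ is manifold-continuous so it is at most as fine). The same argument gives $\mathsf{Dtplg}(\diffsimplex{n}_{\mathsf{sub}}) = \Delta^n$. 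Combining these with the adjunction $\mathsf{Dtplg} \dashv \mathsf{Cdiff}$ yields
\[
S^{\mathcal{D}}_{\mathsf{sub}}(\mathsf{Cdiff}\mathsf{Dtplg}(\mathcal{M}))_n \cong \Hom{\mathsf{Top}}(\Delta^n, \mathcal{M}) = S(\mathcal{M})_n,
\]
so $S^{\mathcal{D}}_{\mathsf{sub}}(\eta_\mathcal{M})$ is the natural inclusion of the smooth singular simplicial set into the ordinary singular simplicial set.

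Both simplicial sets are Kan complexes (Theorem \ref{Kihara1} for the source, classical for the target), so it suffices to verify that $\eta_\mathcal{M}$ induces a bijection on $\pi_0$ and an isomorphism on $\pi_n$ at every basepoint for $n \geq 1$. Via the isomorphism $\pi_n(S^\mathcal{D}(D), x) \cong \pi^\mathcal{D}_n(D, x)$ and Christensen--Wu (Theorem \ref{Christensen-Wu}), these homotopy groups are computed as
\[
[(\R^n, \partial\R^n), (D, x)].
\]
For $D = \mathcal{M}$ this set is smooth pair maps modulo smooth pair homotopy, and for $D = \mathsf{Cdiff}\mathsf{Dtplg}(\mathcal{M})$ the same adjunction turns it into continuous pair maps modulo continuous pair homotopy; the induced map is the obvious inclusion.

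The remaining task is the classical statement that, for a smooth manifold, the inclusion of smooth into continuous pair homotopy classes is a bijection, which is Whitney's smooth approximation theorem. The main technical obstacle is its relative form, since $\partial\R^n$ is a union of hyperplanes (not a smooth submanifold) and the continuous map is only required to equal $x$ on $\partial\R^n$, not on a neighborhood of it. The strategy is first to use local contractibility of $\mathcal{M}$ together with a bump-function argument to homotope through pair maps to one that is constantly $x$ on an open collar of $\partial\R^n$, and then to apply the usual Whitney smoothing while keeping this collar fixed. Running the same argument one dimension higher handles the injectivity statement for homotopies, which completes the plan.
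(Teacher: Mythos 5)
Your proposal takes essentially the same route as the paper: both reduce the claim, via Kihara's comparison $\pi_{n}(S^{\mathcal{D}}(D),x)\cong\pi_{n}^{\mathcal{D}}(D,x)$ and the identification of $S^{\mathcal{D}}\mathsf{Cdiff}\mathsf{Dtplg}(\mathcal{M})$ with the singular simplicial set $S(\mathcal{M})$, to the statement that the smooth homotopy groups of a manifold coincide with its topological homotopy groups. The only real difference is that the paper cites this last fact as known while you sketch a proof of it via Christensen--Wu and Whitney approximation; that sketch is sound, the one small caveat being that your ``same argument'' for $\mathsf{Dtplg}(\diffsimplex{n}_{\mathsf{sub}})=\Delta^{n}$ cannot literally reuse charts at the boundary and needs instead the standard convexity argument for sub-diffeologies of convex sets.
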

\begin{proof}
	From theorem \ref{Kihara1}, the following diagram commute for any $x\in\mathcal{M}$:
	\begin{center}
	\begin{tikzpicture}
		\node (01) at (0*4, 1*1) {$\pi_{\bullet}(S^{\mathcal{D}}(\mathcal{M}),x)$};
		\node (00) at (0*4, 0*1) {$\pi^{\mathcal{D}}_{\bullet}(\mathcal{M},x)$};
		\node (11) at (1*4, 1*1) {$\pi_{\bullet}(S^{\mathcal{D}}\mathsf{Cdiff}\mathsf{Dtplg}(\mathcal{M}),x)$};
		\node (10) at (1*4, 0*1) {$\pi_{\bullet}(\mathcal{M},x)$};
		\draw[->] (01) -- (11);
		\draw[->] (01) -- (00);
		\draw[->] (11) -- (10);
		\draw[->] (00) -- (10);
	\end{tikzpicture}
	\end{center}
	It is known that the map $\pi^{\mathcal{D}}_{\bullet}(\mathcal{M},x)\to\pi_{\bullet}(\mathcal{M},x)$ is isomorphic for any point $x\in\mathcal{M}$
\end{proof}
\begin{cor}
	Let $\mathcal{M}$ be a smooth manifold. Then the canonical map
	\begin{equation*}
		\Omega_{x}S^{\mathcal{D}}(\mathcal{M})
			\to
		\Omega_{x}S^{\mathcal{D}}\mathsf{Cdiff}\mathsf{Dtplg}(\mathcal{M})
	\end{equation*}
	is a weak equivalence for each point $x\in\mathcal{M}$.
\end{cor}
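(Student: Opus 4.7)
The plan is to factor the map as $\Omega_x$ applied to the simplicial map $S^{\mathcal{D}}(\eta_{\mathcal{M}})$ and deduce the result in two steps: first, that $S^{\mathcal{D}}(\eta_{\mathcal{M}})$ is a weak equivalence of Kan complexes; second, that $\Omega_x$ preserves weak equivalences between pointed Kan complexes. Since taking simplicial loop spaces is functorial, this immediately yields the corollary.

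For the first step, Proposition \ref{loop space comparison 1} together with Theorem \ref{Kihara1} shows that $S^{\mathcal{D}}(\eta_{\mathcal{M}})\colon S^{\mathcal{D}}(\mathcal{M})\to S^{\mathcal{D}}\mathsf{Cdiff}\mathsf{Dtplg}(\mathcal{M})$ induces isomorphisms on $\pi_n$ at every basepoint, with both source and target being Kan complexes by Kihara's construction. By Whitehead's theorem for simplicial sets, this is equivalent to being a weak homotopy equivalence in the paper's sense (i.e.\ inducing a bijection on $\pi_0$ of function complexes into every Kan complex).

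For the second step, I would present $\Omega_x K$ for a pointed Kan complex $(K,x)$ as the pullback of the evaluation $K^{\simplex{}{1}}\to K\times K$ along $\{(x,x)\}\hookrightarrow K\times K$. Since $K$ is Kan, this evaluation is a Kan fibration, so the pullback is in fact a homotopy pullback in the Kan--Quillen model structure. Because $\mathsf{Cdiff}\mathsf{Dtplg}$ does not alter the underlying set, the basepoint $x$ is preserved, and the induced square between the defining pullbacks of $\Omega_x S^{\mathcal{D}}(\mathcal{M})$ and $\Omega_x S^{\mathcal{D}}\mathsf{Cdiff}\mathsf{Dtplg}(\mathcal{M})$ consists of identities on the basepoint inclusion, the weak equivalence $S^{\mathcal{D}}(\eta_{\mathcal{M}})$, and the weak equivalence $S^{\mathcal{D}}(\eta_{\mathcal{M}})\times S^{\mathcal{D}}(\eta_{\mathcal{M}})$. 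Homotopy pullbacks preserve weak equivalences, giving the desired conclusion.

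The step most likely to require attention is the verification that the naive pullback presenting $\Omega_x$ is a homotopy pullback; this is standard once one notes that $K^{\simplex{}{1}}\twoheadrightarrow K\times K$ is a fibration for Kan $K$, but it is worth stating explicitly because the paper's definition of weak equivalence is phrased in terms of function complexes rather than directly in model-categorical terms.
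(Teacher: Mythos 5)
Your argument is correct and is exactly the filling-in of the step the paper leaves implicit: the paper states this corollary without proof, relying on Proposition \ref{loop space comparison 1} plus the (unstated) fact that $\Omega_{x}$, presented as the strict fiber of the Kan fibration $K^{\simplex{}{1}}\twoheadrightarrow K\times K$ over $(x,x)$, sends weak equivalences of pointed Kan complexes to weak equivalences --- precisely the homotopy-pullback observation you make explicit (and which the paper itself records only later, in the remark following Proposition \ref{loop space comparison 2}). Your care in noting that $\mathsf{Cdiff}\mathsf{Dtplg}$ fixes the underlying set, so the basepoint comparison is an identity, is a worthwhile detail the paper also omits.
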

The functor $S^{\mathcal{D}}\mathsf{Cdiff}$ coincides with the functor $S\colon\mathsf{Top}\to\sSet$ which attaches the (topological) singular simplicial set to any topological spaces. Thus, there is a canonical weak equivalence
\begin{equation}
	\Omega_{x}S^{\mathcal{D}}(\mathcal{M})
		\to
	\Omega_{x}S(\mathcal{M})
\end{equation}
for any smooth manifold $\mathcal{M}$.

\subsubsection{diffeological and simplicial}
Let $(D,x)$ be a pointed diffeological space. For each non-negative integer $n\geq0$ and smooth map $\phi\colon\diffsimplex{n}\to\mathsf{Loop}(X,x)$, we define a smooth map $\diffsimplex{n}\times\diffsimplex{1}\to D$ as 
\begin{equation*}
	((s_{0},\dots,s_{n}),(t_{0},t_{1}))
		\mapsto
	\phi(s_{0},\dots,s_{n})(t_{1}).
\end{equation*}
The canonical map $|\simplex{}{n}\times\simplex{}{1}|\to\diffsimplex{n}\times\diffsimplex{1}$ gives a simplicial map $\simplex{}{n}\times\simplex{}{1}\to S^{\mathcal{D}}(D)$. In addition, we obtain a map $S^{\mathcal{D}}(\mathsf{Loop}(D,x))\to\Omega_{x}S^{\mathcal{D}}(D)$.

\begin{prop}\label{loop space comparison 2}
	Let $(\mathcal{M},x)$ be a pointed smooth manifold. Then, the canonical map
	\begin{equation*}
		S^{\mathcal{D}}(\mathsf{Loop}(\mathcal{M},x))
			\to
		\Omega_{x}S^{\mathcal{D}}(\mathcal{M})
	\end{equation*}
	is weak homotopy equivalence if the connected component contains $x$ is simply connected.
\end{prop}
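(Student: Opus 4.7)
The plan is to identify the homotopy groups of both simplicial sets with $\pi^{\mathcal{D}}_{n+1}(\mathcal{M}, x)$ and then verify the canonical map realises this identification. On the source, Theorem \ref{Kihara1} applied to the pointed diffeological space $(\mathsf{Loop}(\mathcal{M}, x), \mathrm{const}_{x})$ gives
\[
\pi_{n}(S^{\mathcal{D}}(\mathsf{Loop}(\mathcal{M}, x)), \mathrm{const}_{x})
\cong
\pi^{\mathcal{D}}_{n}(\mathsf{Loop}(\mathcal{M}, x), \mathrm{const}_{x})
=
\pi^{\mathcal{D}}_{n+1}(\mathcal{M}, x),
\]
the last equality being the recursive definition of smooth homotopy groups. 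On the target, Theorem \ref{Kihara1} applied to $(\mathcal{M}, x)$ together with the standard path–loop fibration for the Kan complex $S^{\mathcal{D}}(\mathcal{M})$ yields
\[
\pi_{n}(\Omega_{x} S^{\mathcal{D}}(\mathcal{M}), \mathrm{const}_{x})
\cong
\pi_{n+1}(S^{\mathcal{D}}(\mathcal{M}), x)
\cong
\pi^{\mathcal{D}}_{n+1}(\mathcal{M}, x).
\]
The simply-connectedness hypothesis on the component of $x$ ensures that $\pi^{\mathcal{D}}_{0}(\mathsf{Loop}(\mathcal{M}, x)) = \pi^{\mathcal{D}}_{1}(\mathcal{M}, x) = 0$ and $\pi_{1}(S^{\mathcal{D}}(\mathcal{M}), x) = 0$, so both sides are connected and it suffices to check the comparison at the single basepoint $\mathrm{const}_{x}$; it also removes the basepoint-change ambiguity hidden in the recursive definition of $\pi^{\mathcal{D}}_{n+1}$.

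The main obstacle will be to verify that the canonical map induces the above chain of identifications rather than merely a map between isomorphic groups. Unwinding the construction: an $n$-simplex $\phi\colon \diffsimplex{n} \to \mathsf{Loop}(\mathcal{M}, x)$ is sent to the adjoint smooth map $\diffsimplex{n} \times \diffsimplex{1} \to \mathcal{M}$, and then, via the quotient $|\simplex{}{n} \times \simplex{}{1}| \to \diffsimplex{n} \times \diffsimplex{1}$ (using the first two properties of Kihara's diffeology) and the simplicial adjunction $\simplex{}{1} \times - \dashv (-)^{\simplex{}{1}}$, to an $n$-simplex of $\Omega_{x} S^{\mathcal{D}}(\mathcal{M})$. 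I would compare this with the boundary map of the path–loop fibration by taking a representative cycle of $\pi^{\mathcal{D}}_{n}(\mathsf{Loop}(\mathcal{M}, x), \mathrm{const}_{x})$ in the model $[(\diffsimplex{n}, \partial_{\ep} \diffsimplex{n}), (\mathsf{Loop}(\mathcal{M}, x), \mathrm{const}_{x})]$ provided by Theorems \ref{Christensen-Wu} and \ref{Kihara1}, unfolding to a smooth map $\diffsimplex{n} \times \diffsimplex{1} \to \mathcal{M}$ whose entire boundary except on $\diffsimplex{n} \times \{t\in\partial\diffsimplex{1}\}$ (roughly, the ``base'' faces) collapses to $x$, and observing that this is precisely the description of a representative of $\pi_{n+1}(S^{\mathcal{D}}(\mathcal{M}), x)$ obtained from a relative $(n+1)$-simplex by the standard boundary of the path–loop fibration. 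Naturality in this step reduces to basic properties of the adjunction $\simplex{}{1} \times - \dashv (-)^{\simplex{}{1}}$.

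The hard part is precisely this final compatibility, because the two identifications come from quite different mechanisms: Kihara's smooth simplices on one hand and the simplicial path-loop fibration on the other. Once the compatibility is established, the five-term comparison
\[
\pi_{n}(S^{\mathcal{D}}(\mathsf{Loop}(\mathcal{M}, x)))
\cong
\pi^{\mathcal{D}}_{n+1}(\mathcal{M}, x)
\cong
\pi_{n}(\Omega_{x} S^{\mathcal{D}}(\mathcal{M}))
\]
is induced by the canonical map in question, and the conclusion follows.
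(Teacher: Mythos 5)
Your proposal follows essentially the same route as the paper: both identify $\pi_{n}$ of the two sides with $\pi^{\mathcal{D}}_{n+1}(\mathcal{M},x)$ via Theorems \ref{Christensen-Wu} and \ref{Kihara1} together with the connecting map of the simplicial path--loop fibration, verify compatibility by unwinding the adjoint map $\diffsimplex{n}\times\diffsimplex{1}\to\mathcal{M}$ against an explicit unfolding (the paper writes this as $\partial^{\mathbb{A}}(\phi)(s_{0},\dots,s_{n})(t)=\phi(s_{0},\dots,s_{n}-t,t)$ on the affine model $\mathbb{A}^{\bullet}$), and use simple connectivity of the component of $x$ to dispose of the basepoint issue. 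The step you flag as the hard part --- commutativity of the comparison square between the smooth shift and the simplicial connecting homomorphism --- is exactly what the paper's large commutative diagram asserts, and at a comparable level of detail.
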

\begin{proof}
	For any $\phi\colon(\mathbb{A}^{\bullet+1},\partial_{\ep}\mathbb{A}^{\bullet+1})\to(\mathcal{M},x)$, we define a smooth map $\partial^{\mathbb{A}}(\phi)\colon\mathbb{A}^{\bullet}\to\mathsf{Loop}(\mathcal{M},x)$ as
	\begin{equation*}
		\partial^{\mathbb{A}}(\phi)(s_{0},\dots,s_{\bullet})(t)
			\coloneqq
		\phi(s_{0},s_{1}\dots,s_{\bullet-1},s_{\bullet}-t,t).
	\end{equation*}
	It gives a map $\partial^{\mathbb{A}}\colon[(\mathbb{A}^{\bullet+1},\partial_{\ep}\mathbb{A}^{\bullet+1}),(\mathcal{M},x)]\to[(\mathbb{A}^{\bullet},\partial_{\ep}\mathbb{A}^{\bullet}),(\mathsf{Loop}(\mathcal{M},x),x)]$.
	From theorem \ref{Christensen-Wu} and theorem \ref{Kihara1}, we obtain the following commutative diagram.
	\begin{center}
	\begin{tikzpicture}
		\node (07) at (0*8, 7*1) {$\pi_{\bullet+1}^{\mathcal{D}}(\mathcal{M},x)$};
		\node (17) at (1*8, 7*1) {$\pi_{\bullet}^{\mathcal{D}}(\mathsf{Loop}(\mathcal{M},x),\mathrm{const}_{x})$};
		\node (06) at (0*8, 6*1) {$[(\R^{\bullet+1},\partial\R^{\bullet+1}),(\mathcal{M},x)]$};
		\node (16) at (1*8, 6*1) {$[(\R^{\bullet},\partial\R^{\bullet}),(\mathsf{Loop}(\mathcal{M},x),\mathrm{const}_{x})]$};
		\node (05) at (0*8, 5*1) {$[(\R^{\bullet+1},\partial_{\ep}\R^{\bullet+1}),(\mathcal{M},x)]$};
		\node (15) at (1*8, 5*1) {$[(\R^{\bullet},\partial_{\ep}\R^{\bullet}),(\mathsf{Loop}(\mathcal{M},x),\mathrm{const}_{x})]$};
		\node (04) at (0*8, 4*1) {$[(\mathbb{A}^{\bullet+1},\partial_{\ep}\mathbb{A}^{\bullet+1}),(\mathcal{M},x)]$};
		\node (14) at (1*8, 4*1) {$[(\mathbb{A}^{\bullet},\partial_{\ep}\mathbb{A}^{\bullet}),(\mathsf{Loop}(\mathcal{M},x),\mathrm{const}_{x})]$};
		\node (03) at (0*8, 3*1) {$[(\topsimplex{\bullet+1,\mathsf{sub}},\partial_{\ep}\topsimplex{\bullet+1,\mathsf{sub}}),(\mathcal{M},x)]$};
		\node (13) at (1*8, 3*1) {$[(\topsimplex{\bullet,\mathsf{sub}},\partial_{\ep}\topsimplex{\bullet,\mathsf{sub}}),(\mathsf{Loop}(\mathcal{M},x),\mathrm{const}_{x})]$};
		\node (02) at (0*8, 2*1) {$[(\topsimplex{\bullet+1},\partial_{\ep}\topsimplex{\bullet+1}),(\mathcal{M},x)]$};
		\node (12) at (1*8, 2*1) {$[(\topsimplex{\bullet},\partial_{\ep}\topsimplex{\bullet}),(\mathsf{Loop}(\mathcal{M},x),\mathrm{const}_{x})]$};
		\node (11) at (1*8, 1*1) {$\pi_{\bullet}(S^{\mathcal{D}}(\mathsf{Loop}(\mathcal{M},x)),\mathrm{const}_{x})$};
		\node (01) at (0*8, 1*1) {$\pi_{\bullet+1}(S^{\mathcal{D}}(\mathcal{M}),x)$};
		\node (10) at (1*8, 0*1) {$\pi_{\bullet}(\Omega_{x}S^{\mathcal{D}}(\mathcal{M}),\mathrm{const}_{x})$};
		\node (00) at (0*8, 0*1) {$\pi_{\bullet+1}(S^{\mathcal{D}}(\mathcal{M}),x)$};
		\draw[transform canvas={yshift= 1pt}] (07) --node[above] {$\scriptstyle \text{definition}$} (17);
		\draw[transform canvas={yshift=-1pt}] (07) -- (17);
		\draw[->] (04) --node[above] {$\scriptstyle \partial^{\mathbb{A}}$} (14);
		\draw[->] (00) --node[above] {$\scriptstyle \simeq$} (10);
		\path (00) --node[below] {$\scriptstyle \partial$} (10);
		\draw[transform canvas={xshift= 1pt}] (01) -- (00);
		\draw[transform canvas={xshift=-1pt}] (01) -- (00);
		\draw[->] (06) --node[right] {$\scriptstyle \simeq$} (07);
		\draw[->] (16) --node[right] {$\scriptstyle \simeq$} (17);
		\draw[->] (05) --node[right] {$\scriptstyle \simeq$} (06);
		\path (05) --node[left] {$\scriptstyle \text{cut off}$} (06);
		\draw[->] (15) --node[right] {$\scriptstyle \simeq$} (16);
		\path (15) --node[left] {$\scriptstyle \text{cut off}$} (16);
		\draw[->] (04) --node[right] {$\scriptstyle \simeq$} (05);
		\draw[->] (14) --node[right] {$\scriptstyle \simeq$} (15);
		\draw[transform canvas={xshift= 1pt}] (04) -- (03);
		\draw[transform canvas={xshift=-1pt}] (04) -- (03);
		\draw[transform canvas={xshift= 1pt}] (14) -- (13);
		\draw[transform canvas={xshift=-1pt}] (14) -- (13);
		\draw[->] (03) --node[right] {$\scriptstyle \simeq$} (02);
		\draw[->] (13) --node[right] {$\scriptstyle \simeq$} (12);
		\draw[->] (02) --node[right] {$\scriptstyle \simeq$} (01);
		\draw[->] (12) --node[right] {$\scriptstyle \simeq$} (11);
		\draw[->] (11) -- (10);
	\end{tikzpicture}
	\end{center}
	
	Since the connected component of $\mathcal{M}$ contains $x$ is simply connected, we obtain the following commutative diagram for any smooth loop $\gamma\in\mathsf{Loop}(\mathcal{M},x)$;
	\begin{center}
	\begin{tikzpicture}
		\node (01) at (0*5, 1*1) {$\pi_{\bullet}(S^{\mathcal{D}}(\mathsf{Loop}(\mathcal{M},x)),\gamma)$};
		\node (11) at (1*5, 1*1) {$\pi_{\bullet}(\Omega_{x}S^{\mathcal{D}}(X),\gamma)$};
		\node (00) at (0*5, 0*1) {$\pi_{\bullet}(S^{\mathcal{D}}(\mathsf{Loop}(\mathcal{M},x)),\mathrm{const}_{x})$};
		\node (10) at (1*5, 0*1) {$\pi_{\bullet}(\Omega_{x}S^{\mathcal{D}}(X),\mathrm{const}_{x})$};
		\draw[->] (00) --node[above] {$\scriptstyle \simeq$} (10);
		\draw[->] (00) --node[right] {$\scriptstyle \simeq$} (01);
		\draw[->] (10) --node[right] {$\scriptstyle \simeq$} (11);
		\draw[->] (01) -- (11);
	\end{tikzpicture}
	\end{center}
	Therefore, the canonical map $S^{\mathcal{D}}(\mathsf{Loop}(\mathcal{M},x))\to\Omega_{x}S^{\mathcal{D}}(D)$ is weak homotopy equivalence.
\end{proof}
\begin{rem}[a viewpoint via homotopy limits]
	The above canonical map is given by the universal property of (homotopy) pullback.
	\begin{center}
	\begin{tikzpicture}
		\node (14) at (1*3, 4*1) {$S^{\mathcal{D}}(D)$};
		\node (11) at (1*3, 1*1) {$S^{\mathcal{D}}(D\times D)$};
		\node (13) at (1*3, 3*1) {$S^{\mathcal{D}}(\mathsf{Path}(D))$};
		\node (02) at (0*3, 2*1) {$S^{\mathcal{D}}(\mathsf{Loop}(D,x))$};
		\node (00) at (0*3, 0*1) {$S^{\mathcal{D}}(\ast)$};
		\node (34) at (3*3, 4*1) {$S^{\mathcal{D}}(\mathcal{M})$};
		\node (31) at (3*3, 1*1) {$S^{\mathcal{D}}(D)\times S^{\mathcal{D}}(D)$};
		\node (33) at (3*3, 3*1) {$S^{\mathcal{D}}(D)^{\simplex{}{1}}$};
		\node (22) at (2*3, 2*1) {$\Omega_{x}S^{\mathcal{D}}(D)$};
		\node (20) at (2*3, 0*1) {$\simplex{}{0}$};
		\draw[right hook->] (34) --node[right] {$\scriptstyle \sim$} (33);
		\draw[->>] (33) -- (31);
		\draw[->] (14) -- (13);
		\draw[->] (13) -- (11);
		\draw[transform canvas={yshift= 1pt}] (14) -- (34);
		\draw[transform canvas={yshift=-1pt}] (14) -- (34);
		\draw[->] (11) --node[above] {$\scriptstyle \simeq$} (31);
		\draw[->] (00) -- (11);
		\draw[->] (20) -- (31);
		\draw[->] (02) -- (00);
		\draw[->] (02) -- (13);
		\draw[->] (13) -- (33);
		\draw[->] (00) -- (20);
		\draw[white, line width=2pt] (22) -- (20);
		\draw[->>] (22) -- (20);
		\draw[->] (22) -- (33);
		\draw[white, line width=2pt] (02) -- (22);
		\draw[->, dashed] (02) -- (22);
	\end{tikzpicture}
	\end{center}
	Since the functor $S^{\mathcal{D}}\colon\mathsf{Diff}\to\sSet$ is right Quillen functor, it follows that the canonical map is weak homotopy equivalence if the projection $\mathsf{Path}(D)\to D\times D$ is a fibration.
\end{rem}

\subsection{comparison of de Rham algebras}\label{review dR}
In this subsection, we consider a comparison of ``de Rham algebras''.

First, we review the de Rham algebra of diffeological spaces in the sense of Souriau. A plot $p\colon U\to D$ of diffeological space $D$ is a ``smooth'' map from open subset $U$ of the Euclidean space. We can regard it as an approximation of ``smooth space $D$'' by an open subset of Euclidean space. In view of this, the de Rham algebra $\mathcal{A}_{\mathsf{dR}}(D)$ of $D$ (in the sense of Souriau) is defined as an approximation by commutative cochain algebra $\mathcal{A}(U)$, that is as a limit $\varprojlim_{U}\mathcal{A}(U)$ (as commutative cochain algebra).
\begin{center}
\begin{tikzpicture}
	\node (00) at (0*2, 0*0.5) {$V$};
	\node (02) at (0*2, 2*0.5) {$U$};
	\node (11) at (1*2, 1*0.5) {$D$};
	\draw[->] (02) --node[above] {$\scriptstyle p$} (11);
	\draw[->] (00) --node[below] {$\scriptstyle q$} (11);
	\draw[->] (00) --node[left] {$\scriptstyle \phi$} (02);
	\node (30) at (3*2, 0*0.5) {$\mathcal{A}_{\mathsf{dR}}(V)$};
	\node (32) at (3*2, 2*0.5) {$\mathcal{A}_{\mathsf{dR}}(U)$};
	\node (41) at (4*2, 1*0.5) {$\mathcal{A}_{\mathsf{dR}}(D)$};
	\draw[<-] (32) --node[above] {$\scriptstyle p^{\ast}$} (41);
	\draw[<-] (30) --node[below] {$\scriptstyle q^{\ast}$} (41);
	\draw[<-] (30) --node[left] {$\scriptstyle \phi^{\ast}$} (32);
\end{tikzpicture}
\end{center}
Iglesias-Zemmour has introduced an integration
\begin{equation*}
	\int^{\mathsf{IZ}}
		\colon
	\mathcal{A}_{\mathsf{dR}}(D)
		\to
	\mathcal{C}_{\mathsf{cube}}(D)
\end{equation*}
which is a cochain map. It is a generalization of the ``de Rham map'' for smooth manifolds. 
However, the cochain map is generally \bf not\rm{} quasi-isomorphism. In other words, the de Rham theorem does \bf not\rm{} hold (in the sense of Souriau).

As de Rham algebra such that the de Rham theorem holds, Kuribayashi \cite{kuribayashi2020simplicial} has introduced a new de Rham algebra. At this time, we review it. Let $\mathcal{A}_{\mathsf{aff}}$ be the simplicial cochain algebra defined by $(\mathcal{A}_{\mathsf{aff}})_{n}\coloneqq\mathcal{A}_{\mathsf{dR}}(\mathbb{A}^{n})$ for each $n\geq0$. Since simplicial cochain algebra gives a contravariant functor from the category $\sSet$ of simplicial sets to the category $\mathsf{CAlg}$ of commutative cochain algebras. Therefore, we obtain a contravariant functor $\mathcal{A}_{\mathsf{aff}}(S^{\mathcal{D}}_{\mathsf{aff}}(-))$ from the category $\mathsf{Diff}$ of diffeological spaces to the category $\mathsf{CAlg}$ of commutative cochain algebras.
\begin{thm}[Kuribayashi \cite{kuribayashi2020simplicial} Theorem 2.4]\label{Kuribayashi de Rham}
	For a diffeological space $D$, one has a homotopy commutative diagram
	\begin{center}
	\begin{tikzpicture}
		\node (00) at (0*5, 0*1.25) {$\displaystyle \mathcal{A}_{\mathrm{dR}}(D)$};
		\node (10) at (1*5, 0*1.25) {$\displaystyle \mathcal{C}_{\mathsf{cube}}(D)$};
		\node (01) at (0*5, 1*1.25) {$\mathcal{A}_{\mathrm{aff}}(S^{\mathcal{D}}_{\mathsf{aff}}(D))$};
		\node (11) at (1*5, 1*1.25) {$\mathcal{C}(S^{\mathcal{D}}_{\mathsf{sub}}(D),\R)$};
		\node (02) at (0*5, 2*1.25) {$\displaystyle (\mathcal{C}_{\mathsf{PL}}\otimes\mathcal{A}_{\mathrm{aff}})(S^{\mathcal{D}}_{\mathsf{aff}}(D))$};
		\node (12) at (1*5, 2*1.25) {$\displaystyle \mathcal{C}(S^{\mathcal{D}}_{\mathsf{sub}}(D),\R)$};
		\draw[->] (00) --node[left] {$\scriptstyle \alpha$} (01);
		\draw[->] (10) --node {$\scriptstyle \sim$} (11);
		\draw[->] (01) --node {$\scriptstyle \sim$} (02);
		\draw[->] (01) --node[left] {$\scriptstyle \psi$} (02);
		\draw[transform canvas={xshift= 1pt}] (12) -- (11);
		\draw[transform canvas={xshift=-1pt}] (12) -- (11);
		\draw[->] (02) -- (11);
		\draw[->] (12) --node[above] {$\scriptstyle \sim$} (02);
		\draw[->] (12) --node[below] {$\scriptstyle \varphi$} (02);
		\draw[->] (01) --node[above] {$\scriptstyle \sim$} (11);
		\path (01) --node[below] {$\scriptstyle \text{an ``integration'' $\int$}$} (11);
		\draw[->] (00) --node[below] {$\scriptstyle \int^{\mathsf{IZ}}$} (10);
	\end{tikzpicture}
	\end{center}
	in which $\varphi$ amd $\psi$ are quasi-isomorphisms of cochain algebras and the integration map $\int$ is a morphism of cochain complexes.
\end{thm}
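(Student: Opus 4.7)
My proposal is to follow the strategy that has proven effective for Sullivan-type polynomial de Rham theorems: reduce everything to a statement about extendible simplicial cochain algebras, and then construct the diagram by combining the naturality of integration with general acyclic-models arguments. Concretely, the plan is to first build the three vertical maps $\alpha$, $\psi$, $\varphi$ and the integration $\int$ one-by-one, then verify the homotopy commutativity of each square, and finally identify the quasi-isomorphisms.

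First I would set up $\alpha\colon\mathcal{A}_{\mathrm{dR}}(D)\to\mathcal{A}_{\mathrm{aff}}(S^{\mathcal{D}}_{\mathsf{aff}}(D))$ as the obvious map sending a Souriau form $\omega$ to the simplicial form whose value on an affine plot $\sigma\colon\mathbb{A}^{n}\to D$ is the pullback $\sigma^{\ast}\omega\in\mathcal{A}_{\mathrm{dR}}(\mathbb{A}^{n})=(\mathcal{A}_{\mathrm{aff}})_{n}$. This is well-defined since $\mathcal{A}_{\mathrm{dR}}$ is, by definition, the limit over all plots. The map $\int$ on the lower right is defined simplex-by-simplex as ordinary iterated integration of smooth forms against the subdivision simplices of $\topsimplex{n}$, and $\int^{\mathsf{IZ}}$ is the Iglesias-Zemmour integration against cubical chains; the lower square then commutes essentially by the change-of-variables formula relating cubical parametrisations to affine simplicial ones.

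The core technical input I would use is the extendibility of the simplicial cochain algebra $\mathcal{A}_{\mathrm{aff}}$: for each pair $(n,k)$ the restriction $(\mathcal{A}_{\mathrm{aff}})_{n}\twoheadrightarrow\varprojlim_{[k]\hookrightarrow[n]}(\mathcal{A}_{\mathrm{aff}})_{k}$ along boundary inclusions is surjective. This is a smooth-extension statement on $\mathbb{A}^{n}$, which can be proved via partitions of unity and bump functions adapted to the face structure. Granted this, a standard acyclic-models / Eilenberg-Zilber style argument (e.g.\ the one used to compare Sullivan's $\mathcal{C}_{\mathsf{PL}}$ to singular cochains) yields natural quasi-isomorphisms of simplicial cochain algebras
\begin{equation*}
	\mathcal{A}_{\mathrm{aff}}\xrightarrow{\ \psi\ }\mathcal{C}_{\mathsf{PL}}\otimes\mathcal{A}_{\mathrm{aff}}\xleftarrow{\ \varphi\ }\mathcal{C}_{\mathsf{PL}},
\end{equation*}
where the arrows are induced by the unit maps of the two factors into the tensor product; the point is that both $\mathcal{C}_{\mathsf{PL}}$ and $\mathcal{A}_{\mathrm{aff}}$ are extendible with cohomology $\R$ concentrated in degree $0$ on each simplex, so the tensor product is likewise extendible and has the same cohomology by a Künneth argument. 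Applying $(-)\circ S^{\mathcal{D}}_{\mathsf{aff}}(D)$ preserves quasi-isomorphisms because $S^{\mathcal{D}}_{\mathsf{aff}}(D)$ is a simplicial set and the comparison is natural on each simplicial degree.

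The remaining step is to identify the right column. Composing with the weak equivalence $S^{\mathcal{D}}_{\mathsf{sub}}(D)\xleftarrow{\sim}S^{\mathcal{D}}_{\mathsf{aff}}(D)$ of Theorem \ref{Kihara2}, the Sullivan--de Rham theorem supplies a natural quasi-isomorphism $\mathcal{C}_{\mathsf{PL}}(S^{\mathcal{D}}_{\mathsf{aff}}(D))\xrightarrow{\sim}\mathcal{C}(S^{\mathcal{D}}_{\mathsf{sub}}(D),\R)$, which provides the right vertical comparison. Homotopy commutativity of the top two squares is then a diagram chase in the category of extendible simplicial cochain algebras modulo the acyclic-models homotopy; the bottom square commutes by the classical Stokes-type comparison between Iglesias-Zemmour's cubical integration and simplicial integration along affine plots. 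I expect the main obstacle to be precisely the extendibility of $\mathcal{A}_{\mathrm{aff}}$ together with the Künneth-style comparison for $\mathcal{C}_{\mathsf{PL}}\otimes\mathcal{A}_{\mathrm{aff}}$, since naively the tensor product of extendible algebras is not automatically extendible unless one is careful about flatness and about restricting to the simplicial tensor (degreewise) product; handling this properly is the heart of Kuribayashi's argument in \cite{kuribayashi2020simplicial}, and for the present paper I would simply invoke that reference.
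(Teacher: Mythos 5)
The paper offers no proof of this statement at all: it is imported verbatim as Kuribayashi's Theorem~2.4, stated with the citation and followed only by a remark, so there is nothing in the paper to compare your argument against. Your sketch is a reasonable reconstruction of what Kuribayashi actually does --- the definition of $\alpha$ by pulling back along affine plots, the extendability of the simplicial cochain algebra $\mathcal{A}_{\mathrm{aff}}$, the Bousfield--Gugenheim/Sullivan comparison $\mathcal{A}_{\mathrm{aff}}\to\mathcal{C}_{\mathsf{PL}}\otimes\mathcal{A}_{\mathrm{aff}}\leftarrow\mathcal{C}_{\mathsf{PL}}$ via the two unit maps, and the passage to $\mathcal{C}(S^{\mathcal{D}}_{\mathsf{sub}}(D),\R)$ through the weak equivalence $S^{\mathcal{D}}_{\mathsf{aff}}(D)\to S^{\mathcal{D}}_{\mathsf{sub}}(D)$ --- and you correctly identify the genuine technical pressure points (extendability of $\mathcal{A}_{\mathrm{aff}}$ over $\mathbb{A}^{n}$, and the fact that the degreewise tensor product of extendable algebras needs care). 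Since you, like the author, ultimately defer the details to \cite{kuribayashi2020simplicial}, your proposal is consistent with the paper's treatment; the only caveat is that the homotopy commutativity of the bottom square comparing $\int^{\mathsf{IZ}}$ with the simplicial integration is itself a nontrivial Stokes-type statement in Kuribayashi's paper rather than a mere change of variables, so it should be attributed to the reference rather than dismissed as ``essentially'' formal.
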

\begin{rem}
	Kuribayashi \cite{kuribayashi2020simplicial} also has shown that the factor map $\alpha$ is a quasi-isomorphism if $D$ is a ``good'' diffeological space. 
\end{rem}

Finally, we compare two versions of de Rham algebras, the one is introduced in Kuribayashi \cite{kuribayashi2020simplicial} and another one is introduced in \cite{kageyama2022higher}.
We define two simplicial commutative cochain algebras $\mathcal{A}_{\mathrm{PP}}$ and $\mathcal{A}_{\mathrm{PS}}$ as
\begin{align*}
	(\mathcal{A}_{\mathrm{PP}})_{n}
		&\coloneqq
	\dR{\simplex{}{n}}{\bullet}{,\Z\langle\dq\rangle}\otimes_{\Z\langle \dq,x_{1}\dots,x_{n}\rangle}\R[x_{1},\dots,x_{n}],\\
	(\mathcal{A}_{\mathrm{PS}})_{n}
		&\coloneqq
	\dR{\simplex{}{n}}{\bullet}{,\Z\langle\dq\rangle}\otimes_{\Z\langle \dq,x_{1}\dots,x_{n}\rangle}C^{\infty}(\R^{n}).
\end{align*}
\begin{center}
\begin{tikzpicture}
	\node (02) at (0*2.5, 2*0.8) {$\Z\langle \dq,x_{1}\dots,x_{n}\rangle$};
	\node (22) at (2*2.5, 2*0.8) {$\R[x_{1}\dots,x_{n}]$};
	\node (42) at (4*2.5, 2*0.8) {$C^{\infty}(\R^{n})$};
	\node (00) at (0*2.5, 0*0.8) {$\dR{\simplex{}{n}}{\bullet}{,\Z\langle\dq\rangle}$};
	\node (20) at (2*2.5, 0*0.8) {$(\mathcal{A}_{\mathrm{PP}})_{n}$};
	\node (40) at (4*2.5, 0*0.8) {$(\mathcal{A}_{\mathrm{PS}})_{n}$};
	\node (13) at (1*2.5, 3*0.8) {$\Z\langle \dq,x_{1}\dots,x_{m}\rangle$};
	\node (33) at (3*2.5, 3*0.8) {$\R[x_{1}\dots,x_{m}]$};
	\node (53) at (5*2.5, 3*0.8) {$C^{\infty}(\R^{m})$};
	\node (11) at (1*2.5, 1*0.8) {$\dR{\simplex{}{m}}{\bullet}{,\Z\langle\dq\rangle}$};
	\node (31) at (3*2.5, 1*0.8) {$(\mathcal{A}_{\mathrm{PP}})_{m}$};
	\node (51) at (5*2.5, 1*0.8) {$(\mathcal{A}_{\mathrm{PS}})_{m}$};
	\draw[->] (13) -- (11);
	\draw[->] (13) -- (33);
	\draw[->] (33) -- (53);
	\draw[->] (33) -- (31);
	\draw[->] (53) -- (51);
	\draw[->] (11) -- (31);
	\draw[->] (31) -- (51);
	\draw[->] (13) -- (02);
	\draw[->] (33) -- (22);
	\draw[->] (53) -- (42);
	\draw[->] (11) -- (00);
	\draw[->] (31) -- (20);
	\draw[->] (51) -- (40);
	\draw[white, line width=2pt] (02) -- (22);
	\draw[->] (02) -- (22);
	\draw[white, line width=2pt] (22) -- (42);
	\draw[->] (22) -- (42);
	\draw[->] (02) -- (00);
	\draw[white, line width=2pt] (22) -- (20);
	\draw[->] (22) -- (20);
	\draw[white, line width=2pt] (42) -- (40);
	\draw[->] (42) -- (40);
	\draw[->] (00) -- (20);
	\draw[->] (20) -- (40);
\end{tikzpicture}
\end{center}
Then, there are canonical morphisms of simplicial cochain algebras
\begin{equation*}
	\mathcal{A}_{\mathrm{PS}}
		\leftarrow
	\mathcal{A}_{\mathrm{PP}}
		\leftarrow
	\dR{\simplex{}{-}}{\bullet}{,\Z\langle\dq\rangle}.
\end{equation*}
Clearly, the simplicial cochain algebras $\mathcal{A}_{\mathsf{PS}}$ coincied with $\mathcal{A}_{\mathsf{aff}}$.

\subsection{comparison theorem}\label{comparison}
Combining the above comparisons, we obtain the following theorem.
\begin{thm}\label{comparison1}
	Let $D$ be a diffeological space. Then, there is a morphism
	\begin{equation*}
		\mathbb{J}
			\colon
		\mathsf{CC}(\mathcal{A}_{\mathrm{PS}}(S^{\mathcal{D}}_{\mathsf{aff}}(\mathcal{M})),\R)
			\to
		\mathcal{A}_{\mathrm{PS}}(S^{\mathcal{D}}_{\mathsf{aff}}(\Omega_{x,\mathcal{D}}\mathcal{M}))
	\end{equation*}
	of commutative cochain algebras for which the following diagram is commutative:
	\begin{center}
	\begin{tikzpicture}
		\node (36) at (3*1.5, 6*1.25) {$\Z\langle\dq\rangle\bigotimes^{}_{\dR{S(\mathcal{M})}{}{,\Z\langle\dq\rangle}\otimes\dR{S(\mathcal{M})}{}{,\Z\langle\dq\rangle}}\bm{B}\dR{S(\mathcal{M})}{}{,\Z\langle\dq\rangle}$};
		\node (76) at (7*1.5+1, 6*1.25) {$\dR{\Omega_{x}S(\mathcal{M})}{}{,\Z\langle\dq\rangle}$};
		\draw[->] (36) --node[above] {$\scriptstyle \mathbb{I}$} (76);
		\node (34) at (3*1.5, 4*1.25) {$\mathsf{CC}(\mathcal{A}_{\mathsf{PS}}(S(\mathcal{M})),\R)$};
		\node (74) at (7*1.5+1, 4*1.25) {$\mathcal{A}_{\mathsf{PS}}(\Omega_{x}S(\mathcal{M}))$};
		\draw[->] (34) --node[above] {$\scriptstyle \mathbb{I}$} (74);
		\draw[->] (36) -- (34);
		\draw[->] (76) -- (74);
		\node (25) at (2*1.5, 5*1.25) {$\Z\langle\dq\rangle\bigotimes^{}_{\dR{S^{\mathcal{D}}(\mathcal{M})}{}{,\Z\langle\dq\rangle}\otimes\dR{S^{\mathcal{D}}(\mathcal{M})}{}{,\Z\langle\dq\rangle}}\bm{B}\dR{S^{\mathcal{D}}(\mathcal{M})}{}{,\Z\langle\dq\rangle}$};
		\node (65) at (6*1.5+1, 5*1.25) {$\dR{\Omega_{x}S^{\mathcal{D}}(\mathcal{M})}{}{,\Z\langle\dq\rangle}$};
		\draw[->] (25) --node[above] {$\scriptstyle \mathbb{I}$} (65);
		\node (23) at (2*1.5, 3*1.25) {$\mathsf{CC}(\mathcal{A}_{\mathsf{PS}}(S^{\mathcal{D}}(\mathcal{M})),\R)$};
		\node (63) at (6*1.5+1, 3*1.25) {$\mathcal{A}_{\mathsf{PS}}(\Omega_{x}S^{\mathcal{D}}(\mathcal{M}))$};
		\draw[->] (23) --node[above] {$\scriptstyle \mathbb{I}$} (63);
		\draw[white, line width=2pt] (25) -- (23);
		\draw[white, line width=2pt] (65) -- (63);
		\draw[->] (25) -- (23);
		\draw[->] (65) -- (63);
		\node (12) at (1*1.5, 2*1.25) {$\mathsf{CC}(\mathcal{A}_{\mathsf{PS}}(S^{\mathcal{D}}(\mathcal{M})),\R)$};
		\node (52) at (5*1.5+1, 2*1.25) {$\mathcal{A}_{\mathsf{PS}}(S^{\mathcal{D}}(\mathsf{Loop}(\mathcal{M},x)))$};
		\node (01) at (0*1.5, 1*1.25) {$\mathsf{CC}(\mathcal{A}_{\mathsf{PS}}(S^{\mathcal{D}}_{\mathsf{aff}}(\mathcal{M})),\R)$};
		\node (41) at (4*1.5+1, 1*1.25) {$\displaystyle \mathcal{A}_{\mathsf{PS}}(S^{\mathcal{D}}_{\mathsf{aff}}(\mathsf{Loop}(\mathcal{M},x)))$};
		\draw[->] (01) --node[above] {$\scriptstyle \mathbb{J}$} (41);
		\node (00) at (0*1.5, 0*1.25) {$\mathsf{CC}(\mathcal{A}_{\mathsf{dR}}(\mathcal{M}),\R)$};
		\node (40) at (4*1.5+1, 0*1.25) {$\mathcal{A}_{\mathsf{dR}}(\mathsf{Loop}(\mathcal{M},x))$};
		\draw[->] (00) --node[above] {$\scriptstyle \mathsf{C}$} (40);
		\draw[->] (00) --node {$\scriptstyle \sim$} (01);
		\path (00) --node[left] {$\scriptstyle \mathsf{exp}(\alpha)$} (01);
		\draw[->] (40) --node[left] {$\scriptstyle \alpha$} (41);
		\draw[->] (12) --node {$\scriptstyle \sim$} (01);
		\draw[->] (52) --node {$\scriptstyle \sim$} (41);
		\draw[->] (23) --node {$\scriptstyle \sim$} (12);
		\draw[->] (63) --(52);
		\draw[->] (34) --node {$\scriptstyle \sim$} (23);
		\draw[->] (74) --node {$\scriptstyle \sim$} (63);
		\draw[->] (36) --node {$\scriptstyle \sim$} (25);
		\draw[->] (76) --node {$\scriptstyle \sim$} (65);
	\end{tikzpicture}
	\end{center}
	where $\mathsf{CC}(\mathcal{A},\R)\coloneqq\R\bigotimes^{}_{\mathcal{A}\otimes\mathcal{A}}\bm{B}\mathcal{A}$ for each cochain algebra $\mathcal{A}$.
\end{thm}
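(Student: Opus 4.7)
The plan is to construct $\mathbb{J}$ by applying the simplicial iterated integral $\mathbb{I}$ from Section \ref{property sii} to the affine-smooth singular complex $S^{\mathcal{D}}_{\mathsf{aff}}(\mathcal{M})$, after base-changing coefficients from $\Z\langle\dq\rangle$ to $\R$. Concretely, I would take $\mathbb{J}$ to be the composition
\begin{equation*}
\mathsf{CC}(\mathcal{A}_{\mathrm{PS}}(S^{\mathcal{D}}_{\mathsf{aff}}(\mathcal{M})),\R) \xrightarrow{\mathbb{I}} \mathcal{A}_{\mathrm{PS}}(S^{\mathcal{D}}_{\mathsf{aff}}(\mathcal{M})^{\simplex{}{1}}) \to \mathcal{A}_{\mathrm{PS}}(S^{\mathcal{D}}_{\mathsf{aff}}(\mathsf{Loop}(\mathcal{M},x))),
\end{equation*}
where the last arrow is $\mathcal{A}_{\mathrm{PS}}$ applied to the canonical simplicial map $S^{\mathcal{D}}_{\mathsf{aff}}(\mathsf{Loop}(\mathcal{M},x)) \to S^{\mathcal{D}}_{\mathsf{aff}}(\mathcal{M})^{\simplex{}{1}}$ coming from the pullback defining $\mathsf{Loop}(\mathcal{M},x)$. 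Since $\mathbb{I}$ is a morphism of commutative cochain algebras, so is $\mathbb{J}$. The base-change is legitimate because fiberwise integration is defined term-by-term on monomials in the $x_{i}^{[N]}$ and $dx_{i}$, and so commutes with any ring map $\Z\langle\dq,x_{1},\dots,x_{n}\rangle \to C^{\infty}(\R^{n})$.

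I would then verify commutativity of the diagram square by square, working from top to bottom. The four upper squares (rows 3--6), which form a cube between $\{S,\,S^{\mathcal{D}}\}$ and $\{\dR{-}{\bullet}{,\Z\langle\dq\rangle},\,\mathcal{A}_{\mathrm{PS}}\}$, reduce to two kinds of naturality: naturality of $\mathbb{I}$ in the simplicial-set variable applied to the weak equivalence $S^{\mathcal{D}}(\mathcal{M}) \to S(\mathcal{M})$ of Proposition \ref{loop space comparison 1} (and its loop-space counterpart), and naturality of $\mathbb{I}$ in the coefficient ring along the ring map above; both are immediate from Lemma \ref{Int Lem21} and the construction of $\mathbb{I}$ via fiberwise integration.

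The next two squares (rows 1--3) bring in the affine model $S^{\mathcal{D}}_{\mathsf{aff}}$ and the diffeological loop space $\mathsf{Loop}(\mathcal{M},x)$. Their right-hand vertical arrows come from applying $\mathcal{A}_{\mathrm{PS}}$ to the comparisons $S^{\mathcal{D}}_{\mathsf{aff}}(\mathsf{Loop}(\mathcal{M},x)) \to S^{\mathcal{D}}(\mathsf{Loop}(\mathcal{M},x)) \to \Omega_{x}S^{\mathcal{D}}(\mathcal{M})$ supplied by Theorem \ref{Kihara2} and Proposition \ref{loop space comparison 2}, and commutativity again follows by naturality of $\mathbb{I}$ --- in fact $\mathbb{J}$ has been chosen precisely so that this square is tautological. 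The quasi-isomorphism labels ``$\sim$'' along the columns are then justified by: the Quillen equivalence $|-|_{\mathcal{D}} \dashv S^{\mathcal{D}}$ combined with Theorem \ref{Kihara2} and Propositions \ref{loop space comparison 1}, \ref{loop space comparison 2} on the right; and the bar-complex quasi-isomorphism $\bm{B}\mathcal{A} \to \mathcal{A}$ of Section \ref{property sii} on the left.

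The principal obstacle is the bottom square, which compares Chen's classical iterated integral $\mathsf{C}$ with $\mathbb{J}$ through Kuribayashi's $\alpha$ of Theorem \ref{Kuribayashi de Rham}. Concretely, for smooth forms $\omega_{1},\dots,\omega_{r}$ on $\mathcal{M}$, one must prove that simplicial fiberwise-integration of the $\mathcal{A}_{\mathrm{PS}}$-images on the affine simplicial loop space yields the same form as applying $\alpha$ to Chen's classical $\int \omega_{1}\cdots\omega_{r}$. This reduces, via the maximal-chain decomposition \ref{chain partition}, to a Fubini-type identity: for every plot $p\colon \mathbb{A}^{n+r} \to \mathcal{M}$ and every maximal chain $\Gamma\colon[n+r] \hookrightarrow [n]\times[r]$, classical iterated integration of $p^{\ast}(\omega_{1}\times\cdots\times\omega_{r})$ along the fiber coordinates indexed by $\fs{\Gamma}$ coincides with the divided-power fiberwise integration $\int_{\simplex{}{r}\cact{\Gamma}{}}$. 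Matching the two formulas requires careful bookkeeping of the integration bounds $x_{\fs{\Gamma}(i)+1}$ and $x_{\us{\Gamma}(i)}$ and of the signs introduced by permuting wedge factors across the chain partition, and this is where most of the technical work will lie.
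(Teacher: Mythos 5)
Your construction is essentially the paper's, repackaged. The paper does not literally factor $\mathbb{J}$ through a simplicial map $S^{\mathcal{D}}_{\mathsf{aff}}(\mathsf{Loop}(\mathcal{M},x))\to S^{\mathcal{D}}_{\mathsf{aff}}(\mathcal{M})^{\simplex{}{1}}$; instead it builds evaluation maps $\Phi^{r}_{i}\colon\simplex{}{r}\times S^{\mathcal{D}}_{\mathsf{aff}}(\mathsf{Loop}(\mathcal{M},x))\to S^{\mathcal{D}}_{\mathsf{aff}}(\mathcal{M})$ directly from $(\gamma,f)\mapsto f(-)(\gamma(-))$ and sets $\mathbb{J}([\omega_{1}|\dots|\omega_{r}])\coloneqq\pm(\proj{\simplex{}{r}})_{\ast}((\omega_{1}\Phi^{r}_{1})\wedge\dots\wedge(\omega_{r}\Phi^{r}_{r}))$. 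Unwinding your composite using the base-change property of fiberwise integration (Lemma \ref{Int Lem21}) gives exactly this formula, so the two definitions coincide, and your treatment of the upper squares by naturality matches the paper's. Two points in your write-up need repair, though neither is fatal. First, there is no morphism $\mathsf{CC}(\mathcal{A}_{\mathrm{PS}}(X),\R)\to\mathcal{A}_{\mathrm{PS}}(X^{\simplex{}{1}})$ induced by $\mathbb{I}$: after collapsing the two outer bar factors to $\R$, the putative image would still carry the nontrivial restrictions $\mathsf{E}_{0}^{\ast},\mathsf{E}_{1}^{\ast}$, so $\mathbb{I}$ only descends to $\mathsf{CC}$ after restricting to the based loop space $\{x\}\prod_{X\times X}X^{\simplex{}{1}}$; since your comparison map lands there anyway, you should route the composite through that object (this is precisely the universal-property diagram at the end of Section \ref{property sii}). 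Second, the $\mathcal{A}_{\mathrm{PS}}$-valued fiberwise integration is not obtained by base change along $\Z\langle\dq,x_{1},\dots,x_{n}\rangle\to C^{\infty}(\R^{n})$, because $C^{\infty}(\R^{n})$ is not generated by the image of that map; as the paper notes, one must define integration of smooth forms over $\simplex{}{r}$ outright and then observe that it agrees with the divided-power integral on polynomial forms. Finally, you correctly isolate the bottom square --- the identity $(\proj{\simplex{}{r}})_{\ast}(\id{}\times\varphi)^{\ast}(\alpha(\omega_{1})\times\dots\times\alpha(\omega_{r}))=\varphi^{\ast}(\int\omega_{1}\cdots\omega_{r})$ for each plot $\varphi$ of the loop space --- as the real content; be aware that the paper's own proof also only asserts this chain of equalities, so the maximal-chain Fubini bookkeeping you sketch is left undone on both sides.
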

\begin{proof}
	Simplicial integrations can be defined in the same way even if polynomial functions are replaced by smooth functions. In addition, we can show that the smooth version of lemma \ref{Int Lem21} holds in the same way. Thus, for any differential form $\omega\colon\simplex{}{r}\times X\to\mathcal{A}_{\mathrm{PS}}$, we can define the simplicial fiberwise integration
	\begin{equation*}
		(\proj{\simplex{}{r}})_{\ast}\omega
			\colon
		X
			\to
		\mathcal{A}_{\mathrm{PS}}.
	\end{equation*}

	Let $(\mathcal{M},x)$ be a pointed smooth manifold. For each non-negative integer $r\geq 0$, we obtain a.simplicial map $\Phi(\mathcal{M})^{r}_{i}\colon\simplex{}{r}\to S^{\mathcal{D}}_{\mathsf{aff}}(\mathcal{M})$ as a composition of simplicial maps
	\begin{equation*}
		\simplex{}{r}\times S^{\mathcal{D}}_{\mathsf{aff}}(\mathsf{Loop}(\mathcal{M},x))
			\to
		\simplex{}{1}^{r}\times S^{\mathcal{D}}_{\mathsf{aff}}(\mathsf{Loop}(\mathcal{M},x))^{r}
			\to
		(S^{\mathcal{D}}_{\mathsf{aff}}(\R)\times S^{\mathcal{D}}_{\mathsf{aff}}(\mathsf{Loop}(\mathcal{M},x)))^{r}
			\to
		S^{\mathcal{D}}_{\mathsf{aff}}(\mathcal{M})^{r}
			\xrightarrow{\proj{i}}
		S^{\mathcal{D}}_{\mathsf{aff}}(\mathcal{M})
	\end{equation*}
	where $S^{\mathcal{D}}_{\mathsf{aff}}(\R)\times S^{\mathcal{D}}_{\mathsf{aff}}(\mathsf{Loop}(\mathcal{M},x))\to S^{\mathcal{D}}_{\mathsf{aff}}(\mathcal{M})$ is given by
	\begin{equation*}
		(\mathbb{A}^{n}\xrightarrow{\gamma}\R,\mathbb{A}^{n}\xrightarrow{f}\mathsf{Loop}(\mathcal{M},x))
			\mapsto
		\begin{bmatrix}
			\mathbb{A}^{n}&\xrightarrow{f(-)(\gamma(-))}&\mathcal{M}\\
			(t_{0},\dots,t_{n})&\mapsto&f(t_{0},\dots,t_{n})(\gamma(t_{0},\dots,t_{n}))
		\end{bmatrix}.
	\end{equation*}
	Then we obtain a homogeneous differential form
	\begin{equation*}
		\omega_{1}\times\dots\times\omega_{r}
			\coloneqq
		(\omega_{1}\Phi^{r}_{1})\wedge\dots\wedge(\omega_{r}\Phi^{r}_{r})
			\colon
		\simplex{}{r}\times S^{\mathcal{D}}_{\mathsf{aff}}(\mathsf{Loop}(\mathcal{M},x))
			\to
		\mathcal{A}_{\mathrm{PS}}
	\end{equation*}
	for any pair of homogeneous differential forms $\omega_{1},\dots,\omega_{r}\colon S^{\mathcal{D}}(\mathcal{M})\to\mathcal{A}_{\mathrm{PS}}$. Using the above integration, we define
	\begin{equation*}
		\mathbb{J}([\omega_{1}|\dots|\omega_{r}])
			\coloneqq
		(-1)^{\sum_{i=1}^{r}(r-i)(|\omega_{i}|-1)}(\proj{\simplex{}{r}})_{\ast}(\omega_{1}\times\dots\times\omega_{r}).
	\end{equation*}
	It gives a morphism
	\begin{equation*}
		\mathbb{J}
			\colon
		\mathsf{CC}(\mathcal{A}_{\mathrm{PS}}(S^{\mathcal{D}}_{\mathsf{aff}}(\mathcal{M}));\R)
			\to
		\mathcal{A}_{\mathrm{PS}}(S^{\mathcal{D}}_{\mathsf{aff}}(\Omega_{x,\mathcal{D}}\mathcal{A}))
	\end{equation*}
	of cochain algebra.
	
	Chen's iterated integral assigns a homogeneous differential form $\int\omega_{1}\dots\omega_{r}$ on loop space for any pair of homogeneous differential forms $\omega_{1},\dots,\omega_{r}$ on $\mathcal{M}$. For each smooth map $\varphi^{\vee}\colon\mathbb{A}^{n}\times\R\to\mathcal{M}$ satisfying $\varphi^{\vee}(s_{0},\dots,s_{n},0)=x$ and $\varphi^{\vee}(s_{0},\dots,s_{n},1)=x$ for any $(s_{0},\dots,s_{n})\in\mathbb{A}^{n}$, the above differential form gives a differential form on $\mathbb{A}^{n}$ given by
	\begin{equation*}
		(-1)^{\sum_{i=1}^{r}(r-i)(|\omega_{i}|-1)}\varphi^{\ast}(\int\omega_{1}\cdots\omega_{r})
			\coloneqq
		(-1)^{\sum_{i=1}^{r}(r-i)(|\omega_{i}|-1)}(\proj{\topsimplex{r}})_{\ast}(\id{\topsimplex{r}}\times\varphi)^{\ast}(\omega_{1}\times\cdots\times\omega_{r}).
	\end{equation*}
	It is an $n$-simplex of $\mathcal{A}_{\mathsf{PS}}$.
	On the other hands, $\mathbb{J}$ gives an $n$-simplex
	\begin{align*}
		\mathbb{J}\mathsf{exp}(\alpha)([\omega_{1}|\cdots|\omega_{r}])(\varphi)
			&=
		\mathbb{J}([\alpha(\omega_{1})|\cdots|\alpha(\omega_{r})])(\varphi)\\
			&=
		(-1)^{\sum_{i=1}^{r}(r-i)(|\omega_{i}|-1)}(\proj{\simplex{}{r}})_{\ast}(\id{\simplex{}{r}}\times\varphi)^{\ast}(\alpha_{\mathcal{M}}(\omega_{1})\times\cdots\times\alpha_{\mathcal{M}}(\omega_{r}))\\
			&=
		(-1)^{\sum_{i=1}^{r}(r-i)(|\omega_{i}|-1)}(\proj{\topsimplex{r}})_{\ast}(\id{\topsimplex{r}}\times\varphi)^{\ast}(\omega_{1}\times\cdots\times\omega_{r})\\
			&=
		(-1)^{\sum_{i=1}^{r}(r-i)(|\omega_{i}|-1)}\varphi^{\ast}(\int\omega_{1}\cdots\omega_{r})\\
			&=
		\alpha\mathsf{C}([\omega_{1}|\cdots|\omega_{r}])(\varphi)
	\end{align*}
	of $\mathcal{A}_{\mathsf{PS}}$.
\end{proof}
By combining theorem \ref{comparison} and theorem \ref{Kuribayashi de Rham}, we obtain the following theorem.
\begin{thm}\label{comparison2}
	Let $D$ be a diffeological space. Then, there is the following homotopy commutative diagram:
	\begin{center}
	\begin{tikzpicture}
		\node (00) at (0*2.5, 0*1.5) {$\displaystyle \mathsf{CC}(\mathcal{A}_{\mathrm{dR}}(\mathcal{M}),\R)$};
		\node (20) at (2*2.5+1, 0*1.5) {$\displaystyle \mathcal{A}_{\mathrm{dR}}(\mathsf{Loop}(\mathcal{M},x))$};
		\node (40) at (4*2.5, 0*1.5) {$\displaystyle \mathcal{C}_{\mathsf{cube}}(\mathsf{Loop}(\mathcal{M},x))$};
		\node (01) at (0*2.5, 1*1.5) {$\mathsf{CC}(\mathcal{A}_{\mathrm{PS}}(S^{\mathcal{D}}_{\mathsf{aff}}(\mathcal{M})),\R)$};
		\node (21) at (2*2.5+1, 1*1.5) {$\mathcal{A}_{\mathrm{PS}}(S^{\mathcal{D}}_{\mathsf{aff}}(\mathsf{Loop}(\mathcal{M},x)))$};
		\node (41) at (4*2.5, 1*1.5) {$\mathcal{C}(S^{\mathcal{D}}_{\mathsf{sub}}(\mathsf{Loop}(\mathcal{M},x)),\R)$};
		\node (12) at (1*2.5, 2*1.5) {$\mathsf{CC}(\mathcal{A}_{\mathrm{PS}}(S(\mathcal{M})),\R)$};
		\node (32) at (3*2.5+1, 2*1.5) {$\displaystyle \mathcal{A}_{\mathrm{PS}}(\Omega_{x}S(\mathcal{M}))$};
		\node (52) at (5*2.5, 2*1.5) {$\displaystyle \mathcal{C}(\Omega_{x}S(\mathcal{M}),\R)$};
		\node (13) at (1*2.5, 3*1.5) {$\Z\langle\dq\rangle\bigotimes^{}_{\dR{S(\mathcal{M})}{}{,\Z\langle\dq\rangle}\otimes\dR{S(\mathcal{M})}{}{,\Z\langle\dq\rangle}}\bm{B}\dR{S(\mathcal{M})}{}{,\Z\langle\dq\rangle}$};
		\node (33) at (3*2.5+1, 3*1.5) {$\displaystyle \dR{\Omega_{x}S(\mathcal{M})}{}{,\Z\langle\dq\rangle}$};
		\node (53) at (5*2.5, 3*1.5) {$\displaystyle \mathcal{C}(\Omega_{x}S(\mathcal{M}),\Z\langle\dq\rangle)$};
		\draw[->] (00) --node {$\scriptstyle \sim$} (01);
		\draw[->] (20) --node[left] {$\scriptstyle \alpha$} (21);
		\draw[->] (40) --node {$\scriptstyle \sim$} (41);
		\draw[->] (12) --node {$\scriptstyle \sim$} (01);
		\draw[->] (32) -- (21);
		\draw[->] (52) -- (41);
		\draw[->] (13) -- (12);
		\draw[->] (33) -- (32);
		\draw[->] (53) -- (52);
		\draw[->] (13) --node[below] {$\scriptstyle \mathbb{I}$} (33);
		\draw[->] (33) -- (53);
		\draw[->] (12) --node[below] {$\scriptstyle \mathbb{I}$} (32);
		\draw[->] (32) -- (52);
		\draw[->] (01) --node[below] {$\scriptstyle \mathbb{J}$} (21);
		\draw[->] (21) -- (41);
		\draw[->] (00) --node[below] {$\scriptstyle \mathsf{C}$} (20);
		\draw[->] (20) -- (40);
	\end{tikzpicture}
	\end{center}
\end{thm}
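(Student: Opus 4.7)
The plan is to combine Theorem \ref{comparison1} with Theorem \ref{Kuribayashi de Rham} diagram-chase style. First, I would observe that the left two columns of the target diagram (the ones involving $\mathsf{C}$, $\mathbb{J}$ and the two instances of $\mathbb{I}$) are literally the diagram produced by Theorem \ref{comparison1} specialised to the manifold case; so that half of the commutativity is already on the table. The task is thus to glue on the right column, which replaces the target commutative cochain algebra by a cochain complex of ``cochains on the loop space''.

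Next I would identify what the right column actually is. For each horizontal level the target is obtained by post-composing with the integration-type quasi-isomorphism furnished by Theorem \ref{Kuribayashi de Rham}: at the bottom level it is the Iglesias-Zemmour integration $\int^{\mathsf{IZ}}\colon\mathcal{A}_{\mathsf{dR}}(\mathsf{Loop}(\mathcal{M},x))\to\mathcal{C}_{\mathsf{cube}}(\mathsf{Loop}(\mathcal{M},x))$; at the middle level it is the cochain-map $\int$ from $\mathcal{A}_{\mathsf{PS}}(S^{\mathcal{D}}_{\mathsf{aff}}(\mathsf{Loop}(\mathcal{M},x)))$ to $\mathcal{C}(S^{\mathcal{D}}_{\mathsf{sub}}(\mathsf{Loop}(\mathcal{M},x)),\R)$; and at the upper levels it is the analogous integration of simplicial divided-power de Rham forms against singular chains. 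These are the same map evaluated on three successive models, so Theorem \ref{Kuribayashi de Rham} supplies the vertical weak equivalences and the squares built only from restriction-of-model quasi-isomorphisms commute on the nose (or by definition of $\alpha$).

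The substantive squares are those whose top/bottom arrows are the iterated integrals $\mathsf{C},\mathbb{J},\mathbb{I}$ and whose right arrow is the integration $\int$. Commutativity of these squares is the statement ``integrate the iterated integral against a chain on $\Omega_{x}$ equals iterated-integrate the forms against the associated family of chains on $\mathcal{M}$''. I would handle them uniformly by noting that, after unravelling $\mathbb{I},\mathbb{J},\mathsf{C}$ as fibrewise integration along $\proj{\simplex{}{r}}$ (respectively $\proj{\topsimplex{r}}$) of an exterior product $\omega_{1}\times\dots\times\omega_{r}$, this reduces to Fubini: pairing with an $n$-simplex $\varphi$ on the loop space and then fibrewise integrating over $\simplex{}{r}$ coincides with pairing with the product $n+r$-simplex $\varphi\times\iota_{r}$ and integrating out both directions. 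The compatibility calculation is precisely the one already performed at the end of the proof of Theorem \ref{comparison1}, carried out now with $\mathcal{C}_{\mathsf{cube}}$ (or the singular cochain model) as target instead of $\mathcal{A}_{\mathsf{PS}}$.

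The main obstacle I anticipate is keeping the three different loop-space models in sync: the right column and the left column live over $\mathsf{Loop}(\mathcal{M},x)$, $S^{\mathcal{D}}(\mathsf{Loop}(\mathcal{M},x))$, $\Omega_{x}S^{\mathcal{D}}(\mathcal{M})$ and $\Omega_{x}S(\mathcal{M})$ respectively, and the horizontal compatibility relies on Proposition \ref{loop space comparison 1}, Proposition \ref{loop space comparison 2} and Theorem \ref{Kihara2} to identify cochains on these loop-space models up to weak equivalence. Once the diagram of loop-space models is checked to commute (which it does by construction, since the vertical quasi-isomorphisms of Theorem \ref{Kuribayashi de Rham} are natural in the diffeological space) the remaining squares are formal; the only genuine computation is the Fubini identification above, and it has already been done.
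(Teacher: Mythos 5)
Your proposal matches the paper's approach: the paper offers no proof beyond the remark that the theorem follows by combining Theorem \ref{comparison1} with Theorem \ref{Kuribayashi de Rham}, which is precisely your decomposition into the left-hand squares (supplied by Theorem \ref{comparison1}) and the right-hand squares (Kuribayashi's integration applied to the diffeological space $\mathsf{Loop}(\mathcal{M},x)$, together with naturality of the simplicial integration map under the loop-space comparisons). Your additional Fubini discussion goes beyond what the paper supplies and is not strictly needed once the diagram is pasted from these smaller homotopy-commutative squares.
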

\section{A relationship with Chen's theorem and Hain's theorem}
Let $(\mathcal{M},x)$ be a pointed smooth manifold. Using iterated integral, we obtain an isomorphism
\begin{equation*}
	\widehat{\R[\pi_{1}(\mathcal{M},x)]}
		\xrightarrow{\simeq}
	H^{0}(\R\bigotimes^{\bm{L}}_{\mathcal{A}_{\mathsf{dR}}(\mathcal{M})\otimes\mathcal{A}_{\mathsf{dR}}(\mathcal{M})}\mathcal{A}_{\mathsf{dR}}(\mathcal{M}))^{\ast}
\end{equation*}
of Hopf algebras and a morphism
\begin{equation*}
	\bigoplus_{r>1}\R\otimes\pi_{r+1}(\mathcal{M},x)
		\to
	\bigoplus_{r\geq0}H^{r}(\R\bigotimes^{\bm{L}}_{\mathcal{A}_{\mathsf{dR}}(\mathcal{M})\otimes\mathcal{A}_{\mathsf{dR}}(\mathcal{M})}\mathcal{A}_{\mathsf{dR}}(\mathcal{M}))^{\ast}
\end{equation*}
of graded $\R$-vector space. If $\mathcal{M}$ is simply connected, the later map induces an isomorphism
\begin{equation*}
	\bigoplus_{r>1}\R\otimes\pi_{r+1}(\mathcal{M},x)
		\to
	\mathsf{P}\bigoplus_{r\geq0}H^{r}(\R\bigotimes^{\bm{L}}_{\mathcal{A}_{\mathsf{dR}}(\mathcal{M})\otimes\mathcal{A}_{\mathsf{dR}}(\mathcal{M})}\mathcal{A}_{\mathsf{dR}}(\mathcal{M}))^{\ast}
\end{equation*}
of graded Lie algebra where the left-hand side is a graded Lie algebra whose bracket is Whitehead product and the right-hand side is a graded Lie algebra of primitive elements of graded Hopf algebra. The former morphism is a result of Chen, the latter by Hain. These morphisms are given by composition
\begin{center}
\begin{tikzpicture}
	\node (03) at (0*6, 3*1+0.5) {$\pi_{n+1}(\mathcal{M},x)$};
	\node (00) at (0*6, 0*1) {$\pi_{n}(\Omega_{x}\mathcal{M},x)$};
	\node (10) at (1*6, 0*1) {$H_{n}(\Omega_{x}\mathcal{M};\R)$};
	\node (11) at (1*6, 1*1) {$H^{n}(S^{\mathcal{D}}_{\mathsf{sub}}(\mathsf{Loop}(\mathcal{M},x));\R)^{\ast}$};
	\node (12) at (1*6, 2*1) {$H^{n}(\mathcal{A}_{\mathrm{dR}}(\mathsf{Loop}(\mathcal{M},x)))^{\ast}$};
	\node (22) at (2*6, 2*1) {$H^{n}(\mathsf{CC}(\mathcal{A}_{\mathrm{dR}}(\mathcal{M}),\R))^{\ast}$};
	\node (23) at (2*6, 3*1+0.5) {$\displaystyle H^{n}(\R\bigotimes^{\bm{L}}_{\mathcal{A}_{\mathsf{dR}}(\mathcal{M})\otimes\mathcal{A}_{\mathsf{dR}}(\mathcal{M})}\mathcal{A}_{\mathsf{dR}}(\mathcal{M}))^{\ast}$};
	\draw[->] (03) --node[right] {$\scriptstyle \simeq$} (00);
	\path (03) --node[left] {$\scriptstyle \partial_{n}$} (00);
	\draw[->] (00) --node[below] {$\scriptstyle h_{n}$} (10);
	\draw[->] (10) --node[left] {$\scriptstyle \chi$} (11);
	\draw[->] (11) --node[left] {$\scriptstyle \int^{\ast}$} (12);
	\draw[->] (12) --node[below] {$\scriptstyle \mathsf{C}^{\ast}$} (22);
	\draw[transform canvas={xshift= 1pt}] (22) -- (23);
	\draw[transform canvas={xshift=-1pt}] (22) -- (23);
	\draw[->, dashed] (03) -- (23);
\end{tikzpicture}
\end{center}
where 
\begin{itemize}
	\item $\partial\colon\pi_{n+1}(\mathcal{M},x)\xrightarrow{\simeq}\pi_{n}(\Omega_{x}\mathcal{M},x)$ is the connecting homomorphism,
	\item $h_{n}\colon\pi_{n}(\Omega_{x}\mathcal{M},x)\to H_{n}(\Omega_{x}\mathcal{M})$ is the Hurewicz map,
	\item $\chi\colon H_{n}(\Omega_{x}\mathcal{M};\R)\to H^{n}(S^{\mathcal{D}}_{\mathsf{sub}}(\mathsf{Loop}(\mathcal{M},x));\R)^{\ast}$ is a canonical map into double-dual,
	\item $\int^{\ast}\colon H^{n}(S^{\mathcal{D}}_{\mathsf{sub}}(\mathsf{Loop}(\mathcal{M},x));\R)^{\ast}\to H^{n}(\mathcal{A}_{\mathrm{dR}}(\mathsf{Loop}(\mathcal{M},x)))^{\ast}$ is (the dual of) the de Rham map,
	\item and $\mathsf{C}^{\ast}\colon H^{n}(\mathcal{A}_{\mathrm{dR}}(\mathsf{Loop}(\mathcal{M},x)))^{\ast}\to H^{n}(\mathsf{CC}(\mathcal{A}_{\mathrm{dR}}(\mathcal{M}),\R))^{\ast}$ is (the dual of) the iterated integral.
\end{itemize}
\begin{rem}
	Chen and Hain use Chen's formal homology connection $(\omega,\delta)$ to represent the derived tensor $\R\bigotimes^{\bm{L}}_{\mathcal{A}_{\mathsf{dR}}(\mathcal{M})\otimes\mathcal{A}_{\mathsf{dR}}(\mathcal{M})}\mathcal{A}_{\mathsf{dR}}(\mathcal{M})$ where $\delta$ is a derivation $\delta\colon\widehat{\mathsf{T}H_{+}(\mathcal{M})[-1]}\to\widehat{\mathsf{T}H_{+}(\mathcal{M})[-1]}$ and $\omega$ is a ``degree $1$'' element $\omega\in\bigoplus_{r}\widehat{\mathsf{T}H_{+}(\mathcal{M})[-1]}_{r}\otimes\mathcal{A}^{r+1}(\mathcal{M})$. Using it, the above morpshisms gives morphisms
	\begin{align*}
		\widehat{\R[\pi_{1}(\mathcal{M},x)]}
			&\to
		H_{0}(\widehat{\mathsf{T}H_{+}(\mathcal{M})[-1]},\delta),\\
		\R\otimes\pi_{\bullet+1}(\mathcal{M},x)
			&\to
		H_{\bullet}(\widehat{\mathsf{T}H_{+}(\mathcal{M})[-1]},\delta).
	\end{align*}
\end{rem}

For any ponited smooth manifold $(\mathcal{M},x)$, we obtain a following commutative diagram by theorem \ref{comparison2}:
\begin{center}
\begin{tikzpicture}
	\node (00) at (0*2.5, 0*1.5) {$\displaystyle H^{0}(\mathsf{CC}(\mathcal{A}_{\mathrm{dR}}(\mathcal{M}),\R))$};
	\node (10) at (2*2.5, 0*1.5) {$\displaystyle H^{0}(\mathcal{A}_{\mathrm{dR}}(\mathsf{Loop}(\mathcal{M},x)))$};
	\node (20) at (4*2.5, 0*1.5) {$H^{0}(S^{\mathcal{D}}_{\mathsf{sub}}(\mathsf{Loop}(\mathcal{M},x)),\R)$};
	\node (01) at (0*5, 1*1.5) {$H^{0}(\mathsf{CC}(\mathcal{A}_{\mathrm{PS}}(S(\mathcal{M})),\R))$};
	\node (11) at (1*5, 1*1.5) {$\displaystyle H^{0}(\mathcal{A}_{\mathrm{PS}}(\Omega_{x}S(\mathcal{M})))$};
	\node (21) at (2*5, 1*1.5) {$\displaystyle H^{0}(\Omega_{x}S(\mathcal{M}),\R)$};
	\draw[transform canvas={xshift= 1pt}] (00) -- (01);
	\draw[transform canvas={xshift=-1pt}] (00) -- (01);
	\draw[transform canvas={xshift= 1pt}] (20) -- (21);
	\draw[transform canvas={xshift=-1pt}] (20) -- (21);
	\draw[->] (01) --node[above] {$\scriptstyle \mathbb{I}$} (11);
	\draw[->] (11) -- (21);
	\draw[->] (00) --node[above] {$\scriptstyle \mathsf{C}$} (10);
	\draw[->] (10) -- (20);
\end{tikzpicture}.
\end{center}
In addition, if $\mathcal{M}$ is simply connected, we obtain the following commutative diagram by theorem \ref{loop space comparison 2}:
\begin{center}
\begin{tikzpicture}
	\node (00) at (0*2.5, 0*1.5) {$\displaystyle H^{\bullet}(\mathsf{CC}(\mathcal{A}_{\mathrm{dR}}(\mathcal{M}),\R))$};
	\node (10) at (2*2.5, 0*1.5) {$\displaystyle H^{\bullet}(\mathcal{A}_{\mathrm{dR}}(\mathsf{Loop}(\mathcal{M},x)))$};
	\node (20) at (4*2.5, 0*1.5) {$H^{\bullet}(S^{\mathcal{D}}_{\mathsf{sub}}(\mathsf{Loop}(\mathcal{M},x)),\R)$};
	\node (01) at (0*5, 1*1.5) {$H^{\bullet}(\mathsf{CC}(\mathcal{A}_{\mathrm{PS}}(S(\mathcal{M})),\R))$};
	\node (11) at (1*5, 1*1.5) {$\displaystyle H^{\bullet}(\mathcal{A}_{\mathrm{PS}}(\Omega_{x}S(\mathcal{M})))$};
	\node (21) at (2*5, 1*1.5) {$\displaystyle H^{\bullet}(\Omega_{x}S(\mathcal{M}),\R)$};
	\draw[transform canvas={xshift= 1pt}] (00) -- (01);
	\draw[transform canvas={xshift=-1pt}] (00) -- (01);
	\draw[transform canvas={xshift= 1pt}] (20) -- (21);
	\draw[transform canvas={xshift=-1pt}] (20) -- (21);
	\draw[->] (01) --node[above] {$\scriptstyle \mathbb{I}$} (11);
	\draw[->] (11) -- (21);
	\draw[->] (00) --node[above] {$\scriptstyle \mathsf{C}$} (10);
	\draw[->] (10) -- (20);
\end{tikzpicture}.
\end{center}
Hence it can be seen that a ``smooth structure'' is not essentially necessary for Chen's theorem and Hain's theorem.

Let $\K$ be an arbitrary commutative ring. Then, there is a sequence of $\infty$-categories
\begin{center}
\begin{tikzpicture}
	\node (0) at (0*2.5, 0) {$\mathscr{S}$};
	\node (1) at (1*2.5, 0) {$\mathit{c}\mathscr{CA}\mathit{lg}(\K)$};
	\node (2) at (2*2.5, 0) {$\mathscr{CA}\mathit{lg}(\K)\opposite$};
	\draw[->] (0) --node[above] {$\scriptstyle \K[-]$} (1);
	\draw[->] (1) -- (2);
	\draw[->] (0) -- (0*2.5, -0.5) --node[below] {$\scriptstyle \mathscr{A}$} (2*2.5, -0.5) -- (2);
\end{tikzpicture}
\end{center}
where $\mathscr{S}$ is an $\infty$-category of spaces, $\mathit{c}\mathscr{CA}\mathit{lg}(\K)$ is an $\infty$-category of ``cocomutative coalgebras over $\K$'' and $\mathscr{CA}\mathit{lg}(\K)$ is an $\infty$-category of ``commutative algebras over $\K$''. More precisely, $\mathit{c}\mathscr{CA}\mathit{lg}(\K)$ is an $\infty$-category of $\mathbb{E}_{\infty}$-coalgebras over $\K$ and $\mathscr{CA}\mathit{lg}(\K)$ is an $\infty$-category of $\mathbb{E}_{\infty}$-algebras over $\K$. These $\infty$-functors give canonical maps
\begin{equation*}
	\Hom{\mathscr{S}}(\mathit{pt},\{x\}\underset{X\times X}{\times}X)
		\to
	\Hom{\mathscr{CA}\mathit{lg}(\K)\opposite}(\mathscr{A}(\mathit{pt}),\mathscr{A}(\{x\}\underset{X\times X}{\times}X))
		\to
	\Hom{\mathscr{CA}\mathit{lg}(\K)\opposite}(\mathscr{A}(\mathit{pt}),\mathscr{A}(\{x\})\underset{\mathscr{A}(X)\times\mathscr{A}(X)}{\times}\mathscr{A}(X)).
\end{equation*}
From the previous observations, it is expected that this map is a lift of the map of Chen's theorem and Hain's theorem when $\K=\R$. This is a claim that has not yet been given proper proof and is one for important future works.

\begin{bibdiv}
\begin{biblist}

\bib{MR3333093}{article}{
	author={Arias Abad, Camilo}
	author={Sch\"{a}tz, Florian},
	title={Higher holonomies: comparing two constructions},
	journal={Differential Geom. Appl.},
	Fjournal={Differential Geometry and its Applications},
	volume={40},
	year={2015},
	pages={14--42},
	issn={0926-2245},
	mrclass={53C07 (58H05)},
	mrnumber={3333093},
	mrreviewer={Miroslav Doupovec},
	doi={10.1016/j.difgeo.2015.02.003},
	url={https://doi.org/10.1016/j.difgeo.2015.02.003},
}

\bib{MR3090711}{article}{,
	author={Arias Abad, Camilo}
	author={Sch\"{a}tz, Florian},
	title={The {$A_\infty$} de {R}ham theorem and integration of representations up to homotopy},
	journal={Int. Math. Res. Not. IMRN},
	fjournal={International Mathematics Research Notices. IMRN},
	year={2013},
	number={16},
	pages={3790--3855},
	issn={1073-7928},
	mrclass={53D17 (11H06 58H05)},
	mrnumber={3090711},
	mrreviewer={Zhuo Chen},
	doi={10.1093/imrn/rns166},
	url={https://doi.org/10.1093/imrn/rns166},
}

\bib{MR3192766}{article}{,
	author={Arias Abad, Camilo}
	author={Sch\"{a}tz, Florian},
	title={Holonomies for connections with values in {$L_\infty$}-algebras},
	journal={Homology Homotopy Appl.},
	fjournal={Homology, Homotopy and Applications},
	volume={16},
	wear={2014},
	number={1},
	pages={89--118},
	issn={1532-0073},
	mrclass={55R65 (18G55 55R80)},
	mrnumber={3192766},
	mrreviewer={Matthias Wendt},
	doi={10.4310/HHA.2014.v16.n1.a6},
	url={https://doi.org/10.4310/HHA.2014.v16.n1.a6},
}

\bib{Brown}{article}{
     author = {Brown, Ronald},
     author = {Higgins, Philip J.},
     title = {The equivalence of $\infty $-groupoids and crossed complexes},
     journal = {Cahiers de Topologie et G\'eom\'etrie Diff\'erentielle Cat\'egoriques},
     pages = {371--386},
     publisher = {Dunod \'editeur, publi\'e avec le concours du CNRS},
     volume = {22},
     number = {4},
     year = {1981},
     zbl = {0487.55007},
     language = {en},
     url = {http://www.numdam.org/item/CTGDC_1981__22_4_371_0/}
}

\bib{MR2825807}{article}{
	author={Baez, John C.}
	author={Huerta, John},
	title={An invitation to higher gauge theory},
	journal={Gen. Relativity Gravitation},
	fjournal={General Relativity and Gravitation},
	volume={43},
	year={2011},
	number={9},
	pages={2335--2392},
	issn={0001-7701},
	mrclass={53C08 (18D05 53C80 81T30)},
	mrmunber={2825807},
	mrreviewer={Christopher L. Rogers},
	doi={10.1007/s10714-010-1070-9},
	url={https://doi.org/10.1007/s10714-010-1070-9},
}

\bib{MR2342821}{incollection}{,
    AUTHOR = {Baez, John C.}
    AUTHOR = {Schreiber, Urs},
     TITLE = {Higher gauge theory},
 BOOKTITLE = {Categories in algebra, geometry and mathematical physics},
    SERIES = {Contemp. Math.},
    VOLUME = {431},
     PAGES = {7--30},
 PUBLISHER = {Amer. Math. Soc., Providence, RI},
      YEAR = {2007},
   MRCLASS = {53C29 (18D99 22A22 53C07 55R65)},
  MRNUMBER = {2342821},
MRREVIEWER = {Michael Murray},
       DOI = {10.1090/conm/431/08264},
       URL = {https://doi.org/10.1090/conm/431/08264},
}
\bib{}{misc}{
  doi = {10.48550/ARXIV.0908.2843},
  
  url = {https://arxiv.org/abs/0908.2843},
  
  author = {Block, Jonathan and Smith, Aaron M.},
  
  keywords = {Algebraic Topology (math.AT), Differential Geometry (math.DG), FOS: Mathematics, FOS: Mathematics, 53C29, 35Q15, 57R15},
  
  title = {A Riemann--Hilbert correspondence for infinity local systems},
  
  publisher = {arXiv},
  
  year = {2009},
  
  copyright = {arXiv.org perpetual, non-exclusive license}
}

\bib{MR425956}{article}{,
	author={Bousfield, A. K.}
	author={Gugenheim, V. K. A. M.},
	title={On {${\rm PL}$} de {R}ham theory and rational homotopy type},
	journal={Mem. Amer. Math. Soc.},
	fjournal={Memoirs of the American Mathematical Society},
	volume={8},
	year={1976},
	number={179},
	pages={ix+94},
	issn={0065-9266},
	mrclass={55D15 (58A10)},
	mrnuber={425956},
	mrreviewer={Jean-Michel Lemaire},
	doi={10.1090/memo/0179},
	url={https://doi.org/10.1090/memo/0179},
}

\bib{MR380859}{article}{,
	AUTHOR = {Chen, Kuo-Tsai},
	title={Iterated integrals of differential forms and loop space homology},
	journal={Ann. of Math. (2)},
	fjournal={Annals of Mathematics. Second Series},
	volume={97},
	year={1973},
	pages={217--246},
	issn={0003-486X},
	mrclass={58A10 (57D99)},
	mrnumber={380859},
	mrreviewer={H. H. Johnson},
	doi={10.2307/1970846},
	url={https://doi.org/10.2307/1970846},
}

\bib{MR458461}{article}{
   author={Chen, Kuo-Tsai},
   title={Extension of $C^{\infty }$ function algebra by integrals and
   Malcev completion of $\pi _{1}$},
   journal={Advances in Math.},
   volume={23},
   date={1977},
   number={2},
   pages={181--210},
   issn={0001-8708},
   review={\MR{458461}},
   doi={10.1016/0001-8708(77)90120-7},
}

\bib{MR454968}{article}{,
    AUTHOR = {Chen, Kuo-Tsai},
     TITLE = {Iterated path integrals},
   JOURNAL = {Bull. Amer. Math. Soc.},
  FJOURNAL = {Bulletin of the American Mathematical Society},
	volume={83},
	year={1977},
	number={5},
	pages={831--879},
	issn={0002-9904},
	mrclass={55D35 (58A99)},
	mrnumber={454968},
	mrreviewer={Jean-Michel Lemaire},
	doi={10.1090/S0002-9904-1977-14320-6},
	url={https://doi.org/10.1090/S0002-9904-1977-14320-6},
}

\bib{christensen2015homotopy}{article}{,
	title={The homotopy theory of diffeological spaces}, 
	author={J. Daniel Christensen}
	author={Enxin Wu},
	year={2015},
	eprint={1311.6394},
	archivePrefix={arXiv},
	primaryClass={math.AT}
}

\bib{Christensen_2014}{article}{,
   title={TheD-topology for diffeological spaces},
   volume={272},
   ISSN={0030-8730},
   url={http://dx.doi.org/10.2140/pjm.2014.272.87},
   DOI={10.2140/pjm.2014.272.87},
   number={1},
   journal={Pacific Journal of Mathematics},
   publisher={Mathematical Sciences Publishers},
   author={Christensen, John Daniel},
   author={Sinnamon, Gordon},
   author={Wu, Enxin},
   year={2014},
   month={oct},
}

\bib{MR3636693}{article}{,
    AUTHOR = {Cirio, Lucio Simone}
    AUTHOR = {Martins, Jo\~{a}o Faria},
     TITLE = {Categorifying the {$\germ{sl}(2,\Bbb C)$}
              {K}nizhnik-{Z}amolodchikov connection via an infinitesimal
              2-{Y}ang-{B}axter operator in the string {L}ie-2-algebra},
   JOURNAL = {Adv. Theor. Math. Phys.},
  FJOURNAL = {Advances in Theoretical and Mathematical Physics},
    VOLUME = {21},
      YEAR = {2017},
    NUMBER = {1},
     PAGES = {147--229},
      ISSN = {1095-0761},
   MRCLASS = {81R10},
  MRNUMBER = {3636693},
MRREVIEWER = {Peter Browne R\o nne},
       DOI = {10.4310/ATMP.2017.v21.n1.a3},
       URL = {https://doi.org/10.4310/ATMP.2017.v21.n1.a3},
}

\bib{MR1825257}{article}{,
    AUTHOR = {Gomi, Kiyonori}
    AUTHOR = {Terashima, Yuji},
     TITLE = {Higher-dimensional parallel transports},
   JOURNAL = {Math. Res. Lett.},
  FJOURNAL = {Mathematical Research Letters},
    VOLUME = {8},
      YEAR = {2001},
    NUMBER = {1-2},
     PAGES = {25--33},
      ISSN = {1073-2780},
   MRCLASS = {53C29 (14F43 55R65)},
  MRNUMBER = {1825257},
MRREVIEWER = {Jos\'{e} Ignacio Burgos Gil},
       DOI = {10.4310/MRL.2001.v8.n1.a4},
       URL = {https://doi.org/10.4310/MRL.2001.v8.n1.a4},
}


\bib{MR727818}{article}{,
    AUTHOR = {Hain, Richard M.},
     TITLE = {Iterated integrals and homotopy periods},
   JOURNAL = {Mem. Amer. Math. Soc.},
  FJOURNAL = {Memoirs of the American Mathematical Society},
    VOLUME = {47},
      YEAR = {1984},
    NUMBER = {291},
     PAGES = {iv+98},
      ISSN = {0065-9266},
   MRCLASS = {55P62 (57T30)},
  MRNUMBER = {727818},
MRREVIEWER = {Daniel Tanr\'{e}},
       DOI = {10.1090/memo/0291},
       URL = {https://doi.org/10.1090/memo/0291},
}

\bib{}{misc}{
  doi = {10.48550/ARXIV.0912.0249},
  
  url = {https://arxiv.org/abs/0912.0249},
  
  author = {Igusa, Kiyoshi},
  
  keywords = {Algebraic Topology (math.AT), FOS: Mathematics, FOS: Mathematics, 58C99; 57R22},
  
  title = {Iterated integrals of superconnections},
  
  publisher = {arXiv},
  
  year = {2009},
  
  copyright = {arXiv.org perpetual, non-exclusive license}
}

\bib{MR4177087}{article}{,
    AUTHOR = {Kim, Hyungrok}
    AUTHOR = {Saemann, Christian},
     TITLE = {Adjusted parallel transport for higher gauge theories},
   JOURNAL = {J. Phys. A},
  FJOURNAL = {Journal of Physics. A. Mathematical and Theoretical},
    VOLUME = {53},
      YEAR = {2020},
    NUMBER = {44},
     PAGES = {445206, 52},
      ISSN = {1751-8113},
   MRCLASS = {81T13 (81T30)},
  MRNUMBER = {4177087},
       DOI = {10.1088/1751-8121/ab8ef2},
       URL = {https://doi.org/10.1088/1751-8121/ab8ef2},
}

\bib{}{misc}{
  doi = {10.48550/ARXIV.1502.06166},
  
  url = {https://arxiv.org/abs/1502.06166},
  
  author = {Kapranov, Mikhail},
  
  keywords = {Differential Geometry (math.DG), FOS: Mathematics, FOS: Mathematics},
  
  title = {Membranes and higher groupoids},
  
  publisher = {arXiv},
  
  year = {2015},
  
  copyright = {arXiv.org perpetual, non-exclusive license}
}

\bib{kageyama2022higher}{article}{
      title={Higher Holonomy via a Simplicial Viewpoint}, 
      author={Ryohei Kageyama},
      year={2022},
      eprint={2211.03289},
      archivePrefix={arXiv},
      primaryClass={math.AT}
}
\bib{MR4151724}{article}{,
    AUTHOR = {Kohno, Toshitake},
     TITLE = {Higher holonomy maps for hyperplane arrangements},
   JOURNAL = {Eur. J. Math.},
  FJOURNAL = {European Journal of Mathematics},
    VOLUME = {6},
      YEAR = {2020},
    NUMBER = {3},
     PAGES = {905--927},
      ISSN = {2199-675X},
   MRCLASS = {20F36 (52C35 55P62 58H05)},
  MRNUMBER = {4151724},
MRREVIEWER = {He Wang},
       DOI = {10.1007/s40879-019-00382-z},
       URL = {https://doi.org/10.1007/s40879-019-00382-z},
}

\bib{MR3571383}{article}{,
    AUTHOR = {Kohno, Toshitake},
     TITLE = {Higher holonomy of formal homology connections and braid
              cobordisms},
   JOURNAL = {J. Knot Theory Ramifications},
  FJOURNAL = {Journal of Knot Theory and its Ramifications},
    VOLUME = {25},
      YEAR = {2016},
    NUMBER = {12},
     PAGES = {1642007, 14},
      ISSN = {0218-2165},
   MRCLASS = {57R19 (18D05 55P62)},
  MRNUMBER = {3571383},
MRREVIEWER = {Alexander I. Suciu},
       DOI = {10.1142/S0218216516420074},
       URL = {https://doi.org/10.1142/S0218216516420074},
}


\bib{Kihara2018}{article}{,
	doi = {10.1007/s40062-018-0209-3},
	year = {2018},
	month = {jun},
	publisher = {Springer Science and Business Media {LLC}
},
	volume = {14},
	number = {1},
	pages = {51--90},
	author = {Hiroshi Kihara},
	title = {Model category of diffeological spaces},
	journal = {Journal of Homotopy and Related Structures},
}

\bib{Kihara2022smooth}{article}{,
	title={Smooth singular complexes and diffeological principal bundles}, 
	author={Hiroshi Kihara},
	year={2022},
	eprint={2202.00131},
	archivePrefix={arXiv},
	primaryClass={math.AT}
}

\bib{kuribayashi2020simplicial}{article}{,
	title={Simplicial cochain algebras for diffeological spaces}, 
	author={Katsuhiko Kuribayashi},
	year={2020},
	eprint={1902.10937},
	archivePrefix={arXiv},
	primaryClass={math.AT}
}

\bib{mackaay2001holonomy}{misc}{,
      title={Holonomy and parallel transport for Abelian gerbes}, 
      author={Marco Mackaay}
      author={Roger Picken},
      year={2001},
      eprint={math/0007053},
      archivePrefix={arXiv},
      primaryClass={math.DG}
}

\bib{mackaay2002note}{misc}{,
      title={A note on the holonomy of connections in twisted bundles}, 
      author={Marco Mackaay},
      year={2002},
      eprint={math/0106019},
      archivePrefix={arXiv},
      primaryClass={math.DG}
}

\bib{MR2661492}{article}{,
    AUTHOR = {Martins, Jo\~{a}o Faria}
    AUTHOR = {Picken, Roger},
     TITLE = {On two-dimensional holonomy},
   JOURNAL = {Trans. Amer. Math. Soc.},
  FJOURNAL = {Transactions of the American Mathematical Society},
    VOLUME = {362},
      YEAR = {2010},
    NUMBER = {11},
     PAGES = {5657--5695},
      ISSN = {0002-9947},
   MRCLASS = {53C29 (18D05 53C08)},
  MRNUMBER = {2661492},
MRREVIEWER = {Christopher L. Rogers},
       DOI = {10.1090/S0002-9947-2010-04857-3},
       URL = {https://doi.org/10.1090/S0002-9947-2010-04857-3},
}

\bib{MR2784299}{article}{,
    AUTHOR = {Martins, Jo\~{a}o Faria}
    AUTHOR = {Picken, Roger},
     TITLE = {The fundamental {G}ray 3-groupoid of a smooth manifold and
              local 3-dimensional holonomy based on a 2-crossed module},
   JOURNAL = {Differential Geom. Appl.},
  FJOURNAL = {Differential Geometry and its Applications},
    VOLUME = {29},
      YEAR = {2011},
    NUMBER = {2},
     PAGES = {179--206},
      ISSN = {0926-2245},
   MRCLASS = {53C29 (18D05 53C08)},
  MRNUMBER = {2784299},
MRREVIEWER = {Scott O. Wilson},
       DOI = {10.1016/j.difgeo.2010.10.002},
       URL = {https://doi.org/10.1016/j.difgeo.2010.10.002},
}

\bib{MR4062882}{article}{,
    AUTHOR = {Moss, Sean},
     TITLE = {Another approach to the {K}an-{Q}uillen model structure},
   JOURNAL = {J. Homotopy Relat. Struct.},
  FJOURNAL = {Journal of Homotopy and Related Structures},
    VOLUME = {15},
      YEAR = {2020},
    NUMBER = {1},
     PAGES = {143--165},
      ISSN = {2193-8407},
   MRCLASS = {55U35 (55U10)},
  MRNUMBER = {4062882},
MRREVIEWER = {Martina Rovelli},
       DOI = {10.1007/s40062-019-00247-y},
       URL = {https://doi.org/10.1007/s40062-019-00247-y},
}

\bib{MR3415507}{article}{,
    AUTHOR = {Parzygnat},
   AUTHOR = {Arthur J.},
     TITLE = {Gauge invariant surface holonomy and monopoles},
   JOURNAL = {Theory Appl. Categ.},
  FJOURNAL = {Theory and Applications of Categories},
    VOLUME = {30},
      YEAR = {2015},
     PAGES = {Paper No. 42, 1319--1428},
   MRCLASS = {53C29 (18F15 70S15)},
  MRNUMBER = {3415507},
MRREVIEWER = {Ambar N. Sengupta},
}

\bib{MR3792516}{incollection}{
    AUTHOR = {Riehl, Emily},
     TITLE = {Complicial sets, an overture},
 BOOKTITLE = {2016 {MATRIX} annals},
    SERIES = {MATRIX Book Ser.},
    VOLUME = {1},
     PAGES = {49--76},
 PUBLISHER = {Springer, Cham},
      YEAR = {2018},
   MRCLASS = {18G35},
  MRNUMBER = {3792516},
MRREVIEWER = {R. H. Street},
}

%
%
%
\bib{MR2803871}{article}{,
    AUTHOR = {Schreiber, Urs},
    AUTHOR = {Waldorf, Konrad},
     TITLE = {Smooth functors vs. differential forms},
   JOURNAL = {Homology Homotopy Appl.},
  FJOURNAL = {Homology, Homotopy and Applications},
    VOLUME = {13},
      YEAR = {2011},
    NUMBER = {1},
     PAGES = {143--203},
      ISSN = {1532-0073},
   MRCLASS = {53C08 (18D05)},
  MRNUMBER = {2803871},
MRREVIEWER = {Antonio M. Cegarra},
       DOI = {10.4310/HHA.2011.v13.n1.a7},
       URL = {https://doi.org/10.4310/HHA.2011.v13.n1.a7},
}
%

\bib{Steiner}{article}{,
  doi = {10.48550/ARXIV.MATH/0403237},
  
  url = {https://arxiv.org/abs/math/0403237},
  
  author = {Steiner, Richard},
  
  keywords = {Category Theory (math.CT), FOS: Mathematics, FOS: Mathematics, 18D05},
  
  title = {Omega-categories and chain complexes},
  
  publisher = {arXiv},
  
  year = {2004},
  
  copyright = {Assumed arXiv.org perpetual, non-exclusive license to distribute this article for submissions made before January 2004}
}

\bib{MR646078}{article}{,
    AUTHOR = {Sullivan, Dennis},
     TITLE = {Infinitesimal computations in topology},
   JOURNAL = {Inst. Hautes \'{E}tudes Sci. Publ. Math.},
  FJOURNAL = {Institut des Hautes \'{E}tudes Scientifiques. Publications
              Math\'{e}matiques},
    NUMBER = {47},
      YEAR = {1977},
     PAGES = {269--331 (1978)},
      ISSN = {0073-8301},
   MRCLASS = {57D99 (55D99 58A10)},
  MRNUMBER = {646078},
MRREVIEWER = {J. F. Adams},
       URL = {http://www.numdam.org/item?id=PMIHES_1977__47__269_0},
}

\bib{Verity2}{article}{
  doi = {10.48550/ARXIV.MATH/0604414},
  url = {https://arxiv.org/abs/math/0604414},
  author = {Verity, Dominic},
  keywords = {Category Theory (math.CT), Algebraic Topology (math.AT), FOS: Mathematics, FOS: Mathematics, 18D05, 55U10 (Primary) 18D15, 18D20, 18D35, 18F99, 18G30 (Secondary)},
  title = {Weak complicial sets, a simplicial weak omega-category theory. Part I: basic homotopy theory},
  publisher = {arXiv},
  year = {2006},
  copyright = {Assumed arXiv.org perpetual, non-exclusive license to distribute this article for submissions made before January 2004}
}

\bib{Verity3}{article}{,
      title={Weak complicial sets, a simplicial weak omega-category theory. Part II: nerves of complicial Gray-categories}, 
      author={Dominic Verity},
      year={2006},
      eprint={math/0604416},
      archivePrefix={arXiv},
      primaryClass={math.CT}
}
\end{biblist}
\end{bibdiv}
\end{document}